\title{Bibliography management: \texttt{natbib} package}
\author{Overleaf}
\date {}
\newcommand{\E}{\mathbb{E}}
\newtheorem{theorem}{Theorem}
\newtheorem{corollary}{Corollary}
\newtheorem{proposition}{Proposition}
\newtheorem{lemma}{Lemma}
\newtheorem{definition}{Definition}
\newtheorem{remark}{Remark}
\newtheorem{example}{Example}
\providecommand{\keywords}[1]
{
  \small	
  \textbf{\textit{Keywords---}} #1
}
\title{Adaptive estimation in regression models for weakly dependent data and explanatory variable with known density}
\author{Karine~Bertin$^{1}$, Lisandro Fermin$^{1}$ and  Miguel Padrino$^{2}$  \\
        \small $^{1}$CIMFAV-INGEMAT, Universidad de Valpa{r}a\'{\i}so, e-mail: \\
        \small karine.bertin@uv.cl; lisandro.fermin@uv.cl\\
        \small $^{2}$Departamento de Matem\'aticas, Instituto Venezolano de Investigaciones Cient\'ificas,\\ 
        \small e-mail: padrinomiguel@gmail.com\\
}
\begin{document}
\maketitle

\begin{abstract}
This article is dedicated to the estimation of the regression function when the explanatory variable is a weakly dependent process whose correlation coefficient exhibits exponential decay and has a known bounded density function. The accuracy of the estimation is measured using pointwise risk. A data-driven procedure is proposed using kernel estimation with bandwidth selected via the Goldenshluger-Lepski approach. We demonstrate that the resulting estimator satisfies an oracle-type inequality and it is also shown to be adaptive over H\"older classes. Additionally, unsupervised statistical learning techniques are described and applied to calibrate the method, and some simulations are provided to illustrate the performance of the method.
\end{abstract}

\keywords{Adaptative estimation, Goldenshluger-Lepski method, Regression models, Weakly dependence}

\tableofcontents

\section{Introduction}
In this article, we address the problem of nonparametric estimation using kernel estimators for regression functions in a univariate setting, based on observations $\{(X_{i},Y_{i})\}_{i=1:n}$ from the regression model $Y_i=r(X_i)+\varepsilon_i$, where the explanatory variables $\{X_{i}\}_{i=1:n}$ are weakly dependent and have a known common density $g$. We are interested in studying the asymptotic properties of the proposed estimator using Mean Squared Error (MSE) at a fixed point \(x\in\mathbb{R}\). For this risk, and in the context of dependence, we aim to obtain an adaptive estimator of the regression function \(r(\cdot)\) when it belongs to H\"older classes of regularity \(\beta>0\) (see \cite{tsybakov2004}). Adaptive estimation methods allow obtaining estimators that converge at an optimal rate over a class of general functions, without needing to know the regularity \(\beta\) of the function.

Among the various adaptive estimation methods, the Goldenshluger – Lepski method (G-L) stands out, introduced in \cite{goldenshluger2011bandwidth} and \cite{goldenshluger2014adaptive}. In the last decade, the G-L method has been employed in various statistical models, such as density estimation, regression, conditional density, white noise models, among others, to derive oracle-type inequalities and consequently, adaptive estimators. Especially in independent contexts (see, for example, \cite{BLR}, \cite{chichignoud2017adaptive}, \cite{goldenshluger2011bandwidth}, \cite{GL5}, \cite{Kouakou2020Lp}, \cite{Kouakou2023Sup}, \cite{lepski2014adaptive}). Additionally, it has also been applied in some dependent cases, as mentioned in \cite{asin2017adaptive}, \cite{bertin2017pointwise}, \cite{bertin2020adaptivebis}, \cite{bertin2020adaptive}, and \cite{comte2017adaptive}.

Regarding adaptive results in regression models and dependency contexts, we also mention \cite{baraud2001model} in the $L_2$ risk, who considers the case of $\beta$-mixing data and autoregressive models. Least squares estimators in families of models are considered, and it is shown that a penalized least squares estimator is adaptive. On the other hand, \cite{asin2017adaptive} considers adaptive non-parametric estimation of density and regression in contexts of independence and weak dependence, where the function to be estimated belongs to the Hilbert space $L_{2} := L_{2}[0, 1]$. The estimator is based on an orthogonal series approach, where the selection of the dimension parameter is entirely data-driven, inspired by the G-L method.

We propose in this article to use the procedure presented \cite{lepski2014adaptive}, where adaptive regression estimation is performed for independently and identically distributed data, in pointwise risk, and to extend the G-L method to the posed regression case in a weak dependence context. 
More precisely, we use the Nadaraya-Watson type estimator \begin{equation}\label{toto}
\hat{r}_h(x)=\frac{1}{n}\sum_{i=1}^nY_{i}K_h(x-X_{i})g^{-1}(X_{i})
\end{equation}
proposed in \cite{lepski2014adaptive}. 


First we prove that the estimator \eqref{toto} converges at the optimal rate  $n^{-\frac{\beta}{2\beta+1}}$ over H\"older classes of regularity $\beta$. To obtain this, we perform a classical study of the bias term and the exponential decay of the covariances of the explanatory variables allows for an efficient control of the estimator's variance. \\ 
Second we consider the family of regression estimators $\{r_h, h\in \mathcal{H}\}$ for  a well-chosen family of bandwidths $\mathcal{H}$. We select a bandwidth $\hat{h}$ using G-L method. We prove that the selected estimator $\hat{r}_{\hat{h}}$ satisfies an oracle-type inequality. The proof does an intensively use of the Bernstein inequality for dependent data proposed in \cite{doukhan2007probability}. \\ 
Finally, the oracle inequality allows us to demonstrate that the selected estimator, that does not depend on $\beta$, converges at the rate $\left(\frac{n}{\log n}\right)^{-\frac{\beta}{2\beta +1}}$ over the Hölder classes of regularity $\beta$. This rate is nearly optimal or minimax, except for the logarithmic multiplicative term. This additional logarithmic term is classical in the adaptive case and also appears, for example, in \cite{lepski2014adaptive}. 

Our contribution is as follows: we obtain the adaptive convergence rate for pointwise risks over a wide range of Hölder spaces in the context of weak dependence. This partially generalizes the results obtained in the i.i.d. case in \cite{lepski2014adaptive}. As far as we know, this is the first adaptive result based on the G-L method for pointwise estimation of the regression function in the context of dependent data. The selected estimator performs almost as well as the best estimator in a given finite family of kernel estimators.

Moreover, our data-driven procedure depends only on explicit quantities, which implies that this procedure can be directly implemented in practice. 
As a direct consequence, we obtain a new method for choosing a precise local bandwidth for kernel estimators. In particular, our method depends on the calibration of a parameter $\gamma$. Such calibration is performed using unsupervised statistical learning techniques. 
Additionally, a simulation study is conducted to illustrate the performance of our method.

The techniques developed in this research pave the way to address the problem of adaptive estimation, based on the G-L method, of the regression function in a univariate model when the explanatory variables are weakly dependent and the density function $g$ is \textbf{unknown}. This topic is addressed in a second work.

The remainder of this article is organized as follows. Section \ref{AAModelo} is dedicated to presenting our model and assumptions about the process \(\mathbb{X}\), and an example of a process that satisfies the established hypotheses is provided. The construction of our estimation procedure is developed in Section \ref{AAEstPro}. The main results of the article are presented in Section \ref{AAresultados}, which consists of two subsections: \ref{AAsubsec5.1:ResulHat(r)} where the bias, variance, consistency, and convergence rate of the estimator are established, and \ref{AASubSecDOraculo} where it is demonstrated that the kernel estimator in the selected bandwidth satisfies an oracle-type inequality, allowing us to establish that the kernel estimator in the selected bandwidth is adaptive. A simulation study is conducted in Section \ref{AAsimulacion} to explain the calibration technique of the method and illustrate its performance. Additionally, the article has three appendices: \ref{AARTecnicos} where all the constants obtained in the proofs are explicitly provided, and the technical results are stated and proved, \ref{AAControl:sesgoYvar} where the proofs of the control of the estimator's bias and variance are found, \ref{AAapp:desi} where three inequalities preceding the oracle-type inequality are proved.

\section{Model}\label{AAModelo}

We observe $\{(X_1, Y_1), \ldots, (X_n, Y_n)\}$, a sample satisfying
$$Y_i=r(X_i)+\varepsilon_i,\quad i=1,\ldots,n$$
where $n \geq 2$, the $X_i$ are identically distributed with known probability density function $g$, the $\varepsilon_i$ are independent and identically distributed with a normal distribution with zero mean and variance $\sigma^2 > 0$, the $X_i$ are independent of the $\varepsilon_i$, and $r$ is the regression function $r(x) = \mathbb{E}(Y | X = x)$.

Our goal is to estimate the function $r$ at the point $x \in \mathbb{R}$ using the observed sample $\{(X_1, Y_1), \ldots, (X_n, Y_n)\}$ under a weak dependence approach.\\

The quality estimation of an estimator $\hat{r}$ is measured using the mean squared error at a point $x \in \mathbb{R}$.
$$R(r,\hat{r},x)=\E \left(r(x)-\hat{r}(x)\right)^2.$$

On one hand, the following hypotheses are assumed, corresponding to bounds on the densities of the variables $X_i$ in the neighborhood of point $x$:
$$B(x)=[x-2/(\log n)^2, x+2/(\log n)^2].$$

\begin{description}
\item[$(H_1)$] The density $g$ of the $X_i$ is bounded on $B(x)$; i.e. 
      $$g_{\inf}\leq g(u) \leq g_{\sup}, \quad\forall u\in B(x)$$
    with $g_{\inf}$ and $g_{\sup}$ positive constants.
\item[$(H_2)$] The joint densities $g_{i,j}$ of $(X_i, X_j)$ are bounded  on $B(x)$; i.e.
      $$|g_{i,j}(u,v)|\leq Q, \quad\forall u,v\in B(x), $$
      where $Q$ is a positive constant.
\end{description}

On the other hand, we assume a weak dependency structure on the variables $X_i$. More precisely, for positive integers $u$ and $v$, we denote $i_{1:u}=(i_1,\ldots,i_u)\in\mathbb{Z}^u$ and $j_{1:v}=(j_1,\ldots,j_v)\in\mathbb{Z}^v$, then the random vectors $X_{i_{1:u}}=(X_{i_1},X_{i_2},\cdots, X_{i_u})$ and $X_{j_{1:v}} =(X_{j_1}, X_{j_2},\cdots, X_{j_v})$ are defined as values in $\mathbb{R}^u$ and $\mathbb{R}^v$, respectively. The function $q:\mathbb{Z}^u\times\mathbb{Z}^v \rightarrow \mathbb{Z}$ is defined by $q(i_{1:u},j_{1:v})=\min (j_{1:v}) - \max (i_{1:u})$, and $\Lambda_u$ is the class of functions $G_u:\mathbb{R}^u \rightarrow \mathbb{R}$ such that $\|G_u\|_{\infty} = \sup _{x\in\mathbb{R}^u} |G_u(x)| < \infty$. For a random process $\mathbb{X}=(X_i)_{i\in\mathbb{Z}}$, the correlation coefficient $\alpha(\mathbb{X})=\left(\alpha_k(\mathbb{X})\right)_{k\in\mathbb{N}}$ is defined by,

\begin{equation} \label{AACoefDevil}
    \alpha_k (\mathbb{X})=\sup_{u,v\in\mathbb{N}} \sup_{(i_{1:u},j_{1:v})\in\mathbb{Z}^u\times\mathbb{Z}^v q(i_{1:u},j_{1;v})\geq k} \sup_{G_u \in \Lambda_u} \sup_{G_v \in \Lambda_v} \frac{|cov (G_u (X_{i_{1:u}}),G_v (X_{j_{1:v}} ))|}{\Psi (u,v,G_u , G_v )}
\end{equation}

with $\Psi(u,v,G_u ,G_v )=4\|G_u\|_{\infty}\|G_v\|_{\infty}$. See Section 2 in \cite{bertin2017pointwise}.\\

We assume that the $\alpha(\mathbb{X})$ coefficient of the process $\mathbb{X}$ satisfies the following hypothesis.
\begin{description}
    \item[$(H_3)$] There exists $a\in ]0,1[$ such that
    $$\alpha_k (\mathbb{X})\leq a^k, \quad \forall k\in\mathbb{N}.$$
\end{description}

In what follows, we will denote by $\mathcal{L}$ the set of processes $\mathbb{X}$ that satisfy $(H_1)$, $(H_2)$, and $(H_3)$.\\


Next, two examples of processes that satisfy these hypotheses are provided.
\begin{example}\label{AAEjempAR1}
    We consider $\mathbb{Z}=\{Z_t\}_{t\geq 1}$, an autoregressive process of order 1, defined by $Z_t =\phi Z_{t-1}+\rho \xi_t$, with $\xi_t \sim N(0,1)$, $|\phi|<1$, and $\rho >0$. By recurrence, $Z_{t}=\phi^{n}Z_{t-n}+\rho\sum_{i=0}^{n-1}\phi^{i}\xi_{t-i}$, and taking $n\rightarrow \infty$ results in $Z_{t}=\rho\sum_{i=0}^{\infty}\phi^{i}\xi_{t-i}$, which is a centered and stationary Gaussian process with covariance function $\gamma_{Z}(k)=cov(Z_{t},Z_{t+k})=\frac{\rho^2}{1-\phi^2}\phi^k$ for $k\geq 0$. It can be shown that the process $\mathbb{Z}$ is $\alpha$-weakly dependent satisfying hypothesis $(H_{3})$, see \cite{doukhan1994mixing}.
\end{example}

\begin{example}\label{AAProcesoXenL}
    Denoting $\phi_{\mu,\sigma^2}(x)$ as the probability density function for $N(\mu,\sigma^{2})$ and $\Phi_{\mu,\sigma^2}(x)$ as the cumulative distribution function for $N(\mu,\sigma^{2})$. For $c\in\mathbb{R}^+$, the probability density and cumulative distribution functions of the truncated normal distribution in $[-c,c]$, with mean zero and variance one, are given by $g(x)=\frac{\phi_{0,1}(x)}{p}\mathbf{1}_{[-c,c]}(x)$, where $p=\Phi_{0,1}(c)-\Phi_{0,1}(-c)$, for each $x\in\mathbb{R}$, and
    $$G(x)=\left\{ \begin{array}{lcc}
         0 & if & x<-c  \\
         \\ \frac{1}{p}\left(\Phi_{0,1}(x)-\Phi_{0,1}(c) \right)& if & -c\leq x < c \\
         \\ 1 & if & x\geq c
    \end{array}
    \right.$$
    respectively. Moreover, the inverse function of $G$ is given by $G^{-1}(u)=\Phi_{0,1}^{-1}\left( pu+\Phi_{0,1}(c) \right)$ for each $u\in[0,1]$.

    It can be shown that the process $\mathbb{X}=\{X_t\}_{t\geq 1}$ defined by $X_{t}=\left(G^{-1}\circ\Phi_{0,\frac{\rho^2}{1-\phi^2}}\right)(Z_{t})$ satisfies $\mathbb{X}\in\mathcal{L}$, where the process $\mathbb{Z}=\{Z_{t}\}_{t\geq 1}$ with each 
    $Z_t$ following the distribution  $\Phi_{0,\frac{\rho^2}{1-\phi^2}}$ is the process from Example \ref{AAEjempAR1}.
\end{example}

Finally, we assume the following hypothesis about the regression function.
\begin{description}
        \item[$(H_4)$] There exists a constant $r_{\sup}>0$ such that
        $$|r(u)|\leq r_{\sup}, \quad \forall u\in B(x).$$
\end{description}

\section{Statistical Procedure}
\label{AAEstPro}
In this section, we present the kernel estimator of the regression function in the known density case, specify the family of bandwidths and describe the selection procedure based on the Goldenhluger-Lepski method.\\

We consider the regression function estimator given by
$$
\hat{r}_h(x)=\frac{1}{n}\sum_{k=1}^nY_kK_h(x-X_k)g^{-1}(X_k)
$$
where $h>0$ is the bandwidth,  $K_{h}(\cdot)=\frac{1}{h}K(\frac{\cdot}{h})$, and $K:\mathbb{R}\rightarrow \mathbb{R}$ is a kernel function satisfying
$\int K(u)du =1$, $\int uK(u)du =0$, and the following hypothesis.
\begin{description}
    \item[$(H_5)$] $K$ has support $[-1,1]$ and $\|K\|_{\infty}<+\infty$.
\end{description}
This hypothesis implies that $\|K\|_{1}<+\infty$ and $\|K\|_{2}<+\infty$.
    
To select the bandwidth $h$ of the estimator, we will use the Goldenshluger-Lepski (GL) method. We consider the family of bandwidths
$$\mathcal{H}=\{e^{-i}\}_{i=0}^{M}\cap [h_{\min},h_{\max}]$$
with $h_{\min} = \frac{(\log n)^8}{n}$, $h_{\max} = \frac{1}{(\log n)^2}$, and $M=\left[\log\left(\frac{n}{(\log n)^8}\right)\right]$. 
For $h,h'\in\mathcal{H}$, an oversmoothed auxiliary estimator is defined as
$$\hat{r}_{h,h'}(x)=\frac{1}{n}\sum_{k=1}^nY_kK_h\ast K_{h'}(X_k-x)g^{-1}(X_k).$$
We define, for $h\in\mathcal{H}$,
$$
A(h,x)= \max_{h'\in\mathcal{H}}\{|\hat{r}_{h,h'}(x)-\hat{r}_{h'}(x)|-V(h')\}_+
$$
where $V(h)$ is given by,

$$V(h)=\sqrt{ 2\gamma A_{1}}( \|K\|_1 + 1)( 1 + \delta_n ) \frac{(\log n)^\frac{1}{2}}{(nh)^\frac{1}{2}}$$
with  $\gamma>2$, $A_{1} = ((r_{\sup})^2+\sigma^2)(g_{\inf})^{-1}\|K\|_2^2$ and $\delta_n =(\log n)^{-\frac{1}{5}}$.\\

The GL procedure consists in selecting, based on the data, a bandwidth $\hat h$ from the family $\mathcal{H}$, given by
\begin{equation}\label{eq:h}
\hat{h}= \mbox{arg}\hspace{-0.05cm}\min_{h\in\mathcal{H}}\{A(h,x)+V(h)\},
\end{equation}
The resulting estimator, $\hat{r}_{\hat{h}}(x)$, satisfies an oracle inequality that allows demonstrating its adaptability.

\section{Results} \label{AAresultados}
This section is divided into two subsections. In the first one, we show that the kernel estimator of the regression function proposed in Section \ref{AAEstPro} satisfies the necessary conditions to be a good estimator. In the second subsection, it is shown that the kernel estimator at the bandwidth selected by the G-L method satisfies an Oracle-type inequality, which allows us to establish that this estimator is adaptive.

\subsection{Results for the estimator \texorpdfstring{$\hat{r}_h$}{rh} } \label{AAsubsec5.1:ResulHat(r)}

In this subsection, we study the consistency and convergence rate of the regression estimator. Results on the control of the bias and variance are performed in \ref{AAControl:sesgoYvar}. To control the bias and study the asymptotic unbiasedness, we introduce the definitions of a Hölder class and a kernel of order $m$.

\begin{definition}[Hölder class]
Let $\beta>0$ and $L>0$. The Hölder class $\Sigma(\beta,L)$ is defined as the set of all functions $f:\mathbb{R}\rightarrow\mathbb{R}$ such that the derivative $f^{\mathit{(l)}}$, with $\mathit{l}=\left \lfloor \beta \right \rfloor$, exists and 
\begin{equation*}
\left|f^{(\mathit{l})}(x)-f^{(\mathit{l})}(y)\right|\leqslant L\left|x-y\right|^{\beta-\mathit{l}}, \forall x,y\in \mathbb{R}
\end{equation*}
where $\left \lfloor \beta \right \rfloor=\max\left\{ n\in\mathbb{N},n<\beta\right\}$.
\end{definition}

\begin{definition}[Kernel of order $m$] Let $m\in\mathbb{N}$. It is said that $K:\mathbb{R}\rightarrow\mathbb{R}$ is a kernel of order $m$ if the functions $u\rightarrow u^{j}K(u)$, for $j=0,1,...,m$, satisfy
\begin{equation*}
\int_\mathbb{R}K(u)du=1, \int_\mathbb{R}u^{j}K(u)du=0, j=1,...,m.
\end{equation*}
\end{definition}
In the following two propositions, the bias and variance of the estimator $\hat{r}_h$ are bounded.
\begin{proposition}\label{AApro:sesgo} 
Let $\beta>0$ and $L>0$. We assume that $K$ is of order $m$, where $m\geq \mathit{l}=\left \lfloor \beta \right \rfloor$, and it satisfies $\int_\mathbb{R}|u|^{\beta}|K(u)|du<\infty$. Then, under hypothesis $(H_{1})$, if $r\in\Sigma(\beta,L)$, we have
$$\left|E\left[\hat{r}_{h}(x)\right]-r(x)\right|=\left|K_h \ast r (x) -r(x)\right|\leq h^{\beta}A_{0},$$
where $A_0 =\frac{L}{\mathit{l}!}\int_{\mathbb{R}}|u|^{\beta}|K(u)|du$. The estimator $\hat{r}_{h}$ is asymptotically unbiased as $h$ tends to $0$.
\end{proposition}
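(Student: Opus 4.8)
The plan is to split the argument into two independent pieces: first identify the mean $\E[\hat{r}_h(x)]$ with the convolution $K_h\ast r(x)$, and then bound the deterministic quantity $|K_h\ast r(x)-r(x)|$ by the classical bias estimate for kernels of order $m$. For the first piece, I would substitute $Y_k=r(X_k)+\varepsilon_k$ and use linearity of expectation. The noise contribution vanishes because $\varepsilon_k$ is independent of $X_k$ with $\E[\varepsilon_k]=0$, so $\E[\varepsilon_k K_h(x-X_k)g^{-1}(X_k)]=\E[\varepsilon_k]\,\E[K_h(x-X_k)g^{-1}(X_k)]=0$. It then remains to evaluate $\E[r(X_k)K_h(x-X_k)g^{-1}(X_k)]=\int r(u)K_h(x-u)g^{-1}(u)g(u)\,du$, using that $X_k$ has density $g$.

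This is where hypothesis $(H_1)$ enters. Since $K$ has support $[-1,1]$, the integrand is supported on $[x-h,x+h]$, which is contained in $B(x)$ for $h\le h_{\max}$; on $B(x)$ we have $g\ge g_{\inf}>0$, so $g^{-1}(u)g(u)=1$ on the effective support and the integral collapses to $\int r(u)K_h(x-u)\,du=K_h\ast r(x)$. As this holds identically in $k$, averaging over $k$ gives $\E[\hat{r}_h(x)]=K_h\ast r(x)$, which is the first equality in the statement.

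For the bound itself, I would normalize using $\int K=1$ to write $K_h\ast r(x)-r(x)=\int K_h(x-u)\big(r(u)-r(x)\big)\,du$ and substitute $v=(x-u)/h$, giving $\int K(v)\big(r(x-hv)-r(x)\big)\,dv$. For $l=\lfloor\beta\rfloor\ge1$, a Taylor expansion of $r$ at $x$ with Lagrange remainder yields $r(x-hv)-r(x)=\sum_{j=1}^{l}\frac{(-hv)^j}{j!}r^{(j)}(x)+\frac{(-hv)^l}{l!}\big(r^{(l)}(\xi)-r^{(l)}(x)\big)$ for some $\xi$ between $x$ and $x-hv$ (for $l=0$ one uses the direct Hölder estimate $|r(x-hv)-r(x)|\le L|hv|^{\beta}$). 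Integrating against $K$, every polynomial moment $\int v^jK(v)\,dv$ with $1\le j\le l$ vanishes because $K$ is of order $m\ge l$, so only the remainder survives.

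The final step controls the remainder through the Hölder condition: since the intermediate point lies within $h|v|$ of $x$, we have $|r^{(l)}(\xi)-r^{(l)}(x)|\le Lh^{\beta-l}|v|^{\beta-l}$, and collecting the powers of $h$ produces the bound $h^\beta\frac{L}{l!}\int|v|^\beta|K(v)|\,dv=h^\beta A_0$; the assumption $\int|u|^\beta|K(u)|\,du<\infty$ ensures $A_0<\infty$, and letting $h\to0$ gives asymptotic unbiasedness. I do not anticipate a genuine obstacle, as this is the standard kernel bias computation; the only points demanding care are the bookkeeping of the Taylor remainder (keeping the intermediate point within $h|v|$ of $x$ across the cases $l=0$ and $l\ge1$) and the justification of the $g^{-1}g$ cancellation, which rests on combining the compact support of $K$ with the lower bound $g\ge g_{\inf}$ from $(H_1)$.
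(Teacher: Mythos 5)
Your proposal is correct and follows essentially the same route as the paper: identify $\E[\hat r_h(x)]$ with $K_h\ast r(x)$ via the zero-mean noise and the $g^{-1}g$ cancellation, then apply the Taylor--Lagrange expansion, kill the polynomial moments using that $K$ has order $m\ge l$, and bound the remainder with the H\"older condition to get $h^\beta A_0$. Your explicit treatment of the support/$(H_1)$ justification and of the $l=0$ case is slightly more careful than the paper's write-up, but the argument is the same.
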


 \begin{proposition}\label{AApro:var}
  Under the assumptions $(H_1)$, $(H_2)$, $(H_3)$, $(H_4)$, and $(H_5)$, it holds that for $h\in (0, h_{max})$
  $$\mathbb{E}\left[(\hat{r}_h (x)-\mathbb{E}[\hat{r}_h (x)])^2\right] \leq \frac{A_{1}}{nh}+A_2\frac{(\log n)^{-\frac{1}{2}}}{nh},$$
  where $A_{1}=((r_{\sup})^2 +\sigma^2 )(g_{\inf})^{-1}\|K\|_2^2$, $A_2 = 2A_3 +\frac{4}{|\log a|}A_4$, $A_3 = (r_{\sup})^2 \|K\|_1^2 \left((g_{\inf})^{-2}Q+1\right)$ and $A_4 =\frac{4}{1-a}(r_{\sup})^2(g_{\inf})^{-2}\|K\|_{\infty}^2$.
\end{proposition}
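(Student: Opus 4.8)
The plan is to bound the variance of $\hat{r}_h(x)$ by decomposing it into a sum of the individual variances (the diagonal terms) and the sum of covariances (the off-diagonal terms), then control each piece separately. Writing $\hat{r}_h(x)=\frac{1}{n}\sum_{k=1}^n Z_k$ with $Z_k=Y_k K_h(x-X_k)g^{-1}(X_k)$, I would start from the standard identity
$$\mathbb{E}\left[(\hat{r}_h(x)-\mathbb{E}[\hat{r}_h(x)])^2\right]=\frac{1}{n^2}\sum_{k=1}^n\mathrm{Var}(Z_k)+\frac{1}{n^2}\sum_{k\neq l}\mathrm{cov}(Z_k,Z_l).$$
The first term should yield the main contribution $A_1/(nh)$, and the second, thanks to the weak dependence in $(H_3)$, should produce the lower-order term $A_2(\log n)^{-1/2}/(nh)$.

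For the diagonal term, I would bound $\mathrm{Var}(Z_k)\leq \mathbb{E}[Z_k^2]$ and compute $\mathbb{E}[Z_k^2]=\mathbb{E}[Y_k^2 K_h^2(x-X_k)g^{-2}(X_k)]$. Conditioning on $X_k$ gives $\mathbb{E}[Y_k^2\mid X_k]=r^2(X_k)+\sigma^2$, so using $(H_4)$ this is bounded by $((r_{\sup})^2+\sigma^2)$ on the support of $K_h(x-\cdot)$, which for $h\leq h_{\max}$ lies in $B(x)$. Then $\mathbb{E}[K_h^2(x-X_k)g^{-1}(X_k)]=\int K_h^2(x-u)g^{-1}(u)\,du$, and after the change of variables $u=x-ht$ together with $(H_1)$ (the lower bound $g_{\inf}$) this gives $\frac{1}{h}(g_{\inf})^{-1}\|K\|_2^2$. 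Multiplying by $((r_{\sup})^2+\sigma^2)$ and by the factor $1/n$ from the $n$ identical terms produces exactly $A_1/(nh)$.

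For the off-diagonal covariance sum, I would split the pairs $(k,l)$ according to the gap $|k-l|$. For pairs with small gap, say $|k-l|\leq \log n/|\log a|$ (a threshold chosen so that $a^{|k-l|}$ becomes negligibly small beyond it), I would bound each covariance crudely using the joint density bound $(H_2)$: after conditioning on $(X_k,X_l)$ and bounding $Y$ via $(H_4)$, each such term is of order $\|K\|_1^2 (g_{\inf})^{-2}Q \cdot \frac{1}{?}$ — more carefully, $\mathbb{E}[Z_k Z_l]$ and $\mathbb{E}[Z_k]\mathbb{E}[Z_l]$ are each controlled by the $A_3$-type constant, giving a contribution matching $2A_3$. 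For pairs with large gap, I would invoke the weak-dependence coefficient directly: applying the definition \eqref{AACoefDevil} with $u=v=1$ and $(H_3)$ yields $|\mathrm{cov}(Z_k,Z_l)|\leq 4\|\,\cdot\,\|_\infty\|\,\cdot\,\|_\infty \,a^{|k-l|}$, where the sup-norms of the functions $y K_h(x-\cdot)g^{-1}(\cdot)$ bring in the $\|K\|_\infty^2 h^{-2}$ and $(g_{\inf})^{-2}(r_{\sup})^2$ factors appearing in $A_4$; summing the geometric series $\sum_k a^k=\frac{1}{1-a}$ and accounting for the threshold $\log n/|\log a|$ is what generates the $\frac{4}{|\log a|}A_4$ constant and the extra $(\log n)^{-1/2}$ decay relative to the leading term.

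The main obstacle will be calibrating the threshold separating small from large gaps so that both regimes simultaneously fit inside the claimed bound $A_2(\log n)^{-1/2}/(nh)$: one must check that the $a^k$ decay kills the large-gap contribution fast enough, while the number of small-gap pairs (of order $n\log n$) times the crude per-pair bound still divides down by $n^2$ to leave the correct rate. A secondary subtlety is handling the unboundedness of the Gaussian noise $\varepsilon$ when bounding sup-norms for the weak-dependence inequality; since the coefficient $\alpha_k$ is defined through bounded test functions $G_u,G_v$, I would apply it only to the $X$-dependent factors, treating the $Y_k=r(X_k)+\varepsilon_k$ structure by exploiting the independence of $\varepsilon$ from $\mathbb{X}$ and the i.i.d. property of the noise, so that the covariance of $Z_k,Z_l$ reduces to a covariance of bounded functionals of $X_k,X_l$ plus terms that vanish by independence.
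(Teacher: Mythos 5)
Your proposal is correct and follows essentially the same route as the paper: the diagonal terms are handled by conditioning on $X$ and using $(H_1)$, $(H_4)$ to produce $A_1/(nh)$, while the off-diagonal covariances are reduced (via independence and centering of the Gaussian noise) to covariances of the bounded functionals $r(X_k)K_h(x-X_k)g^{-1}(X_k)$ and then split by gap, with $(H_2)$ controlling close pairs and $(H_3)$ plus a geometric series controlling distant ones. The only real difference is the cut-off --- the paper takes the $h$-dependent threshold $u_n=\left[\frac{1}{h|\log a|(\log n)^{1/2}}\right]$ rather than your fixed $\log n/|\log a|$ --- but both choices yield the $(\log n)^{-1/2}/(nh)$ remainder for the bandwidths actually used.
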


Propositions \ref{AApro:sesgo} and \ref{AApro:var} imply that as $n\rightarrow\infty$, $h\rightarrow0$, and $nh\rightarrow \infty$, the mean squared error $MSE\left(\hat{r}_{h},r,x\right)=E\left[\left(\hat{r}_{h}(x)-r(x)\right)^{2}\right]\rightarrow 0$. In other words, $\hat{r}_{h}(x)$ is a consistent mean squared estimator (and therefore, consistent in probability) of $r(x)$.

 Now we will determine the convergence rate of the regression estimator $\hat{r}_{h}(x)$, as stated in the following theorem.

\begin{theorem}\label{AAveloconv}
Under assumptions of Propositions \ref{AApro:sesgo} and \ref{AApro:var}, it follows that if $r\in\Sigma(\beta,L)$, then the estimator $\hat{r}_{h_{*}}$, with 
$h_{*}=n^{-\frac{1}{2\beta+1}}$, satisfies:
$$E\left[\left(\hat{r}_{h_{*}}(x)-r(x)\right)^{2}\right]\leq C_* n^{-\frac{2\beta}{2\beta+1}},$$
where $C_*$ is a positive constant depending on the kernel $K$, of $\beta$, $L$, $r_{sup}$, $Q$, $g_{inf}$, $\sigma$, and $a$.
\end{theorem}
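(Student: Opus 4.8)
The plan is to exploit the classical bias--variance decomposition of the pointwise mean squared error and then simply insert the two bounds already established in Propositions \ref{AApro:sesgo} and \ref{AApro:var}, optimising over the bandwidth $h$ afterwards. Concretely, I would first write the identity
\[
\E\left[\left(\hat{r}_{h}(x)-r(x)\right)^{2}\right]=\big(\E[\hat{r}_{h}(x)]-r(x)\big)^{2}+\E\left[\big(\hat{r}_{h}(x)-\E[\hat{r}_{h}(x)]\big)^{2}\right],
\]
that is, squared bias plus variance, the cross term vanishing because $\E[\hat{r}_h(x)-\E\hat{r}_h(x)]=0$.

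For the squared bias, Proposition \ref{AApro:sesgo} gives $|\E[\hat{r}_h(x)]-r(x)|\le h^{\beta}A_0$ whenever $r\in\Sigma(\beta,L)$ and $K$ is a kernel of order $m\ge\lfloor\beta\rfloor$ with $\int|u|^{\beta}|K(u)|\,du<\infty$, so the first term is at most $A_0^2 h^{2\beta}$. For the variance, Proposition \ref{AApro:var} yields, for $h\in(0,h_{\max})$,
\[
\E\left[\big(\hat{r}_{h}(x)-\E[\hat{r}_{h}(x)]\big)^{2}\right]\le \frac{A_1}{nh}+A_2\frac{(\log n)^{-1/2}}{nh}.
\]
Combining the two bounds gives $\mathrm{MSE}(\hat r_h,r,x)\le A_0^2 h^{2\beta}+A_1(nh)^{-1}+A_2(\log n)^{-1/2}(nh)^{-1}$.

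It then remains to balance the dominant squared-bias term $h^{2\beta}$ against the dominant variance term $(nh)^{-1}$. Setting $h_{*}=n^{-1/(2\beta+1)}$ equalises their orders: indeed $h_*^{2\beta}=n^{-2\beta/(2\beta+1)}$ and $nh_*=n^{2\beta/(2\beta+1)}$, so that $(nh_*)^{-1}=n^{-2\beta/(2\beta+1)}$ as well, while the remaining term carries an extra factor $(\log n)^{-1/2}$ and is therefore negligible. Substituting $h_*$ and factoring out $n^{-2\beta/(2\beta+1)}$ leaves
\[
\E\left[\left(\hat{r}_{h_{*}}(x)-r(x)\right)^{2}\right]\le \Big(A_0^2+A_1+A_2(\log n)^{-1/2}\Big)n^{-2\beta/(2\beta+1)},
\]
and bounding $(\log n)^{-1/2}\le 1$ (for $n\ge 3$) allows one to take $C_*=A_0^2+A_1+A_2$, which depends only on $K$, $\beta$, $L$, $r_{\sup}$, $Q$, $g_{\inf}$, $\sigma$ and $a$ through $A_0,A_1,A_2$.

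There is no serious obstacle here; the argument is entirely routine once the two propositions are in hand, the only point deserving a word of care being the admissibility of the chosen bandwidth. One must check that $h_*=n^{-1/(2\beta+1)}$ lies in the interval $(0,h_{\max})$ required by Proposition \ref{AApro:var}; since $h_{\max}=(\log n)^{-2}$ and $n^{1/(2\beta+1)}$ grows faster than $(\log n)^2$, the inequality $h_*<h_{\max}$ holds for all $n$ large enough, which is sufficient for the stated asymptotic rate. The genuinely substantive work --- controlling the variance through the exponential decay of the $\alpha$-coefficients --- has already been absorbed into Proposition \ref{AApro:var}.
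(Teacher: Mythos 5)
Your proposal is correct and follows essentially the same route as the paper: bias--variance decomposition, insertion of the bounds from Propositions \ref{AApro:sesgo} and \ref{AApro:var}, the crude bound $(\log n)^{-1/2}\le 1$ to arrive at $MSE(\hat r_h,r,x)\le A_0^2h^{2\beta}+(A_1+A_2)(nh)^{-1}$, and substitution of $h_*=n^{-1/(2\beta+1)}$ to obtain $C_*=A_0^2+A_1+A_2$. Your additional remark verifying $h_*<h_{\max}$ for $n$ large is a point the paper leaves implicit, but otherwise the two arguments coincide.
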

\begin{proof}
    Using Propositions \ref{AApro:sesgo} and \ref{AApro:var}, it is deduced that for $n\ge 4$,
    \begin{equation}\label{AAeq:mse}MSE\left(\hat{r}_{h},r,x\right)\leq h^{2\beta}A^{2}_{0}+\frac{A_{1}+A_2}{nh}.
    \end{equation}

    Substituting $h$ with $h_*$ in the above equation, we have:

    \begin{equation*}
    MSE\left(\hat{r}_{h_*},r,x\right)\leq (A_{0}^{2}+A_{1}+A_{2})n^{-\frac{2\beta}{2\beta+1}}
    \end{equation*}   
\end{proof}

Note that any bandwidth of the form $An^{-\frac{1}{2\beta+1}}$ allows obtaining the same convergence rate. In particular, the bandwidth that minimizes the term on the right in the inequality \eqref{AAeq:mse}, since it is of the form $h=\left( \frac{A_1 +A_2}{2\beta A_0^2} \right)^{\frac{1}{2\beta +1}}n^{-\frac{1}{2\beta+1}}$.\\



Based on the previous result, we have:
$$\sup_{r\in\Sigma(\beta,L)}E\left[\left(\hat{r}_{h_{*}}(x)-r(x)\right)^{2}\right]\leq C_* n^{-\frac{2\beta}{2\beta+1}},$$
and
$$\inf_{\hat{r}\in \Theta}\sup_{r\in\Sigma(\beta,L),\mathbb{X}\in\mathcal{L}}E\left[\left(\hat{r}(x)-r(x)\right)^{2}\right]\leq C_* n^{-\frac{2\beta}{2\beta+1}}, $$
where $\Theta$ is the set of all estimators of $r$, and we recall that $\mathcal{L}$ is the set of all processes $\mathbb{X}$ that satisfy $(H_1)$, $(H_2)$, and $(H_3)$.\\
In \cite{tsybakov2004} (see Section 2.5), it is shown that
$$\inf_{\hat{r}\in \Theta}\sup_{r\in\Sigma(\beta,L),\mathbb{X}\in\mathcal{L}}E\left[\left(\hat{r}(x)-r(x)\right)^{2}\right]\ge c n^{-\frac{2\beta}{2\beta+1}},$$
where $c$ is a constant depending on $\beta$, $L$, $g_{sup}$, and $\sigma^2$.

This allows us to conclude that the estimator $\hat{r}_{h_*}$ converges at the optimal rate over the H\"older class $\Sigma(\beta,L)$, which is given by
\begin{equation}\label{AAvc}
    v_{n}=n^{-\frac{\beta}{2\beta+1}}.
\end{equation}

 \subsection{Oracle Inequality and Adaptability of the Estimator}\label{AASubSecDOraculo} 
In this subsection we show that the kernel estimator at the selected bandwidth by the G-L method satisfies an Oracle-type inequality. Finally, by the usual techniques of the G-L method and applying the Oracle-type inequality we obtain that the estimator is adaptive. To achieve such objective the following two propositions are stated, their proofs can be seen in Appendix \ref{AAapp:desi}.\\
 
In the following proposition, we obtain a first inequality satisfied by the estimator $\hat{r}_{\hat{h}}(x)$ using mainly the definition of $\hat{h}$ given in \eqref{eq:h}.
 \begin{proposition}\label{AAPropDO}
Under hypothesis $(H_5)$, the estimator $\hat{r}_{\hat{h}}(x)$ satisfies for all $h\in\mathcal{H}$, 
 \begin{eqnarray}
 \nonumber
\lefteqn{  \left(\mathbb{E}\left(r(x)-\hat{r}_{\hat{h}}(x)\right)^2\right)^{\frac{1}{2}} }\\
\nonumber
&\leq& \left(\mathbb{E}\left(r(x)-\hat{r}_{h}(x)\right)^2\right)^\frac{1}{2}+2\left(\mathbb{E}(T_{1}^{2})\right)^{\frac{1}{2}}+2\left(\mathbb{E}(T_{2}^{2})\right)^{\frac{1}{2}}+ 2C(h)\|K\|_1 +2V(h)
\end{eqnarray}
where
$$T_1=\max_{h'\in\mathcal{H}}\{|\hat{r}_{h'}(x)-\mathbb{E}(\hat{r}_{h'}(x))|-V_1(h')\}_+, \quad 
T_2=\max_{h'\in\mathcal{H}}\{|\hat{r}_{h,h'}(x)-\mathbb{E}(\hat{r}_{h,h'}(x))|-V_2(h')\}_+,
$$
$$C(h)=\max_{u\in B(x)}\left|K_h\ast r(u)-r(u)\right|$$
and  $V_{1}(h')$, $V_{2}(h')$ such that $V(h')=V_{1}(h')+V_{2}(h')$, with

$$V_1 (h') = \sqrt{ 2\gamma A_{1}\frac{\log n}{nh'} } ( 1 + \delta_n ), \quad  V_2 (h') = \|K\|_1V_1 (h').$$ 
\end{proposition}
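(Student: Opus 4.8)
The plan is to run the standard Goldenshluger--Lepski comparison argument, exploiting the commutativity of convolution together with the defining minimality of $\hat{h}$. First I would fix $h\in\mathcal{H}$ and insert the oversmoothed estimator $\hat{r}_{h,\hat{h}}(x)$ through the triangle inequality, obtaining
$$\left|\hat{r}_{\hat{h}}(x)-r(x)\right|\le\left|\hat{r}_{\hat{h}}(x)-\hat{r}_{h,\hat{h}}(x)\right|+\left|\hat{r}_{h,\hat{h}}(x)-\hat{r}_{h}(x)\right|+\left|\hat{r}_{h}(x)-r(x)\right|.$$
The last term is already the oracle term we want; the two leftmost ones must be absorbed into $A$ and $V$.

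Each middle term is controlled directly by the definition of $A(\cdot,x)$. From $A(h,x)=\max_{h'\in\mathcal{H}}\{|\hat{r}_{h,h'}(x)-\hat{r}_{h'}(x)|-V(h')\}_+$ one reads off $|\hat{r}_{h,h'}(x)-\hat{r}_{h'}(x)|\le A(h,x)+V(h')$ for every $h'\in\mathcal{H}$; taking $h'=\hat{h}$ bounds the first term by $A(h,x)+V(\hat{h})$. Because $K_h\ast K_{h'}=K_{h'}\ast K_h$, the auxiliary estimator is symmetric, $\hat{r}_{h,\hat{h}}=\hat{r}_{\hat{h},h}$, so the same inequality with $h$ and $\hat{h}$ interchanged and $h'=h$ bounds the second term by $A(\hat{h},x)+V(h)$. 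The definition \eqref{eq:h} of $\hat{h}$ as the minimizer of $A(\cdot,x)+V(\cdot)$ gives $A(\hat{h},x)+V(\hat{h})\le A(h,x)+V(h)$, which collapses the four quantities to $2A(h,x)+2V(h)$:
$$\left|\hat{r}_{\hat{h}}(x)-r(x)\right|\le\left|\hat{r}_{h}(x)-r(x)\right|+2A(h,x)+2V(h).$$

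The crux is then to split $A(h,x)$ into $T_1$, $T_2$ and a bias remainder. I would decompose $\hat{r}_{h,h'}(x)-\hat{r}_{h'}(x)$ into the two centered terms $\hat{r}_{h,h'}(x)-\mathbb{E}\hat{r}_{h,h'}(x)$ and $\mathbb{E}\hat{r}_{h'}(x)-\hat{r}_{h'}(x)$ together with the deterministic gap $\mathbb{E}\hat{r}_{h,h'}(x)-\mathbb{E}\hat{r}_{h'}(x)$, subtract $V(h')=V_1(h')+V_2(h')$, and use the subadditivity of $t\mapsto\{t\}_+$ and of $\max_{h'}$ to get $A(h,x)\le T_1+T_2+\|K\|_1 C(h)$, once the gap is shown to satisfy $|\mathbb{E}\hat{r}_{h,h'}(x)-\mathbb{E}\hat{r}_{h'}(x)|\le\|K\|_1 C(h)$. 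This bias estimate is the main obstacle. Using $\mathbb{E}\hat{r}_{h'}(x)=K_{h'}\ast r(x)$ (Proposition \ref{AApro:sesgo}) and Fubini's theorem, the gap reduces to $\int K_{h'}(x-u)\,[K_h\ast r(u)-r(u)]\,du$. Here hypothesis $(H_5)$ is decisive: the support $[-1,1]$ of $K$ forces $|x-u|\le h'\le h_{\max}=(\log n)^{-2}$ on the integrand, so the relevant $u$ stay in $B(x)$, where $|K_h\ast r(u)-r(u)|\le C(h)$; since $\int|K_{h'}(x-u)|\,du=\|K\|_1$, the claimed bound follows.

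Putting the pieces together yields the pointwise inequality
$$\left|\hat{r}_{\hat{h}}(x)-r(x)\right|\le\left|\hat{r}_{h}(x)-r(x)\right|+2T_1+2T_2+2\|K\|_1 C(h)+2V(h),$$
and taking the $L^2$ norm with respect to $\mathbb{E}$ and applying Minkowski's inequality (the deterministic quantities $C(h)$ and $V(h)$ coinciding with their own norms) gives exactly the asserted bound. Apart from the bias gap, every step is bookkeeping with positive parts, maxima and constants; the one point requiring care is that the split $V=V_1+V_2$ is matched term-by-term to the two fluctuations, so that $T_1$ and $T_2$ appear with precisely the thresholds $V_1(h')$ and $V_2(h')$ of the statement.
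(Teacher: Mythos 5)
Your proposal is correct and follows essentially the same route as the paper's proof: the same triangle inequality through $\hat{r}_{h,\hat{h}}$, the same use of the minimality of $\hat{h}$ to collapse to $2(A(h,x)+V(h))$, the same splitting of $A(h,x)$ into $T_1+T_2+\|K\|_1C(h)$ via $V=V_1+V_2$, and the same compact-support argument ($|x-u|\le h'\le h_{\max}$ keeps $u$ in $B(x)$) for the bias gap. No gaps; the only difference is cosmetic bookkeeping.
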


The proof is reported to Appendix \ref{AAdemopropDO}.

In the previous proposition, a bound for the square root of the pointwise mean squared error of the estimator $\hat{r}_{\hat{h}}(\cdot)$ is provided in terms of the bias and variance of the estimator $\hat{r}_{h}(\cdot)$, as well as $V_{1}(h')$, $V_{2}(h')$, $\E(T_{1}^{2})$, and $\E(T_{2}^{2})$. The proof of the proposition crucially relies on the fact that $V(h')=V_{1}(h')+V_{2}(h')$. The precise choices made for the expressions of $V_{1}(h')$ and $V_{2}(h)$ are fundamental for the proof of Proposition \ref{AAPropET_1^2yV_1} and allow establishing that $\mathbb{E}(T_1^2)$ and $\mathbb{E}(T_2^2)$ are negligible compared to the bias and variance terms of any estimator $\hat{r}_h$ for $h\in\mathcal{H}$.\\

To control the terms $\E(T_1^2 )$ and $\E(T_2^2 )$, a truncation operator is used, in addition to the weak dependence of the explanatory variable and the Bernstein inequality proposed in \cite{doukhan2007probability}.

\begin{proposition} \label{AAPropET_1^2yV_1}
 Under hypotheses $(H_1)$, $(H_2)$, $(H_3)$, $(H_4)$, and $(H_5)$, we have for sufficiently large $n$
 \begin{itemize}
 \item[(i)] $ \mathbb{E}(T_1^2) \leq A_5\frac{\log n}{n},$
\item[(ii)] $\mathbb{E}(T_2^2) \leq A_5\|K\|_1^2\frac{\log n}{n},$
\end{itemize}
where $A_5$ is an explicit constant (see Appendix \ref{AAConstantes}) that depends on the kernel $K$, $r_{sup}$, $Q$, $g_{inf}$, $\sigma$, $a$, and $\gamma$.

 
\end{proposition}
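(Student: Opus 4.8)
The plan is to exploit that $T_1$ and $T_2$ are maxima over the finite family $\mathcal{H}$, which by the choice of $M$ contains at most $O(\log n)$ bandwidths, so that the positive parts decouple. First I would bound, using that the square of a maximum of nonnegative terms is dominated by the sum of their squares,
$$\E(T_1^2) \le \sum_{h'\in\mathcal{H}} \E\!\left[\left\{|\hat r_{h'}(x)-\E(\hat r_{h'}(x))|-V_1(h')\right\}_+^2\right],$$
and then treat each summand through the identity $\E[(Z-c)_+^2]=\int_0^\infty 2t\,\mathbb{P}(Z>c+t)\,dt$ with $Z=|\hat r_{h'}(x)-\E(\hat r_{h'}(x))|$ and $c=V_1(h')$. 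Everything reduces to a sharp deviation bound for the centered linear statistic $\hat r_{h'}(x)-\E(\hat r_{h'}(x))=\tfrac1n\sum_k\{Y_kK_{h'}(x-X_k)g^{-1}(X_k)-\E[\,\cdot\,]\}$.

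Because $Y_k=r(X_k)+\varepsilon_k$ carries an unbounded Gaussian term, the summands are not bounded and the Bernstein inequality of \cite{doukhan2007probability} does not apply directly. I would therefore truncate at a level $\tau_n$ of order $\sqrt{\log n}$, writing $\varepsilon_k=\varepsilon_k\mathbf 1_{|\varepsilon_k|\le\tau_n}+\varepsilon_k\mathbf 1_{|\varepsilon_k|>\tau_n}$ and splitting $\hat r_{h'}=\hat r_{h'}^{(1)}+\hat r_{h'}^{(2)}$ accordingly. On the support of $K_{h'}(x-\cdot)$, namely $[x-h',x+h']\subset B(x)$ for $n$ large, hypotheses $(H_1)$ and $(H_4)$ give $g^{-1}\le g_{\inf}^{-1}$ and $|r|\le r_{\sup}$, so the truncated summands are bounded by $M=\tfrac1n(r_{\sup}+\tau_n)\|K\|_\infty g_{\inf}^{-1}/h'$, and their weak dependence is inherited from $\mathbb{X}$ and controlled by $(H_3)$. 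The remainder $\hat r_{h'}^{(2)}$ is handled by a Gaussian tail estimate of $\E[\varepsilon^2\mathbf 1_{|\varepsilon|>\tau_n}]$, with $\tau_n$ chosen so that its contribution does not exceed the target order $\tfrac{\log n}{n}$.

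For the bounded part I would apply the Bernstein inequality for weakly dependent sequences to $\hat r_{h'}^{(1)}(x)-\E(\hat r_{h'}^{(1)}(x))$, feeding in the variance bound $\tfrac{A_1}{nh'}+A_2\tfrac{(\log n)^{-1/2}}{nh'}$ of Proposition \ref{AApro:var}. The decisive point is the calibration of the threshold: since $V_1(h')^2=2\gamma A_1\tfrac{\log n}{nh'}(1+\delta_n)^2$ and the leading variance is $A_1/(nh')$, the exponent produced at level $V_1(h')$ is $\gamma(1+\delta_n)^2\log n\,(1+o(1))$, i.e. a tail of order $n^{-\gamma(1+\delta_n)^2}$. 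The factor $1+\delta_n$ with $\delta_n=(\log n)^{-1/5}$ is precisely what absorbs the lower-order variance term and the dependence correction. Moreover the Bernstein correction $Mt$ is negligible against the variance because $nh'\ge nh_{\min}=(\log n)^8$ forces $\tau_n\sqrt{\log n/(nh')}\to0$; this is the structural role of the choice $h_{\min}=(\log n)^8/n$. Carrying out $\int_0^\infty 2t\,\mathbb{P}(Z>V_1(h')+t)\,dt$ then yields a bound of order $\tfrac{1}{nh'}\,n^{-\gamma(1+\delta_n)^2}$ for each $h'$.

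Finally I would sum over the $O(\log n)$ bandwidths: the worst case $1/(nh')\le 1/(nh_{\min})$ together with $|\mathcal H|=O(\log n)$ gives a total bounded by a constant times $\tfrac{\log n}{n}$, since $\gamma>2$ guarantees $n^{-\gamma(1+\delta_n)^2}\le n^{-1}$ for $n$ large; adding the truncation remainder produces (i) with an explicit constant $A_5$. Statement (ii) follows by the same argument applied to $\hat r_{h,h'}$: by Young's inequality the convolved kernel obeys $\|K_h*K_{h'}\|_2\le\|K\|_1\|K_{h'}\|_2$ and $\|K_h*K_{h'}\|_\infty\le\|K\|_1\|K_{h'}\|_\infty$, so both variance and threshold pick up a factor $\|K\|_1^2$ (consistently with $V_2(h')=\|K\|_1V_1(h')$), yielding the extra $\|K\|_1^2$. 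The main obstacle I expect is the truncation--Bernstein balance: choosing $\tau_n$ and checking that the correction term is dominated while the calibrated exponent stays above $\gamma>2$, so that after summation over $\mathcal H$ both the bounded part and the remainder remain at the order $\tfrac{\log n}{n}$.
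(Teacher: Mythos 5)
Your proposal follows essentially the same route as the paper's proof: bounding the maximum by the sum over $\mathcal{H}$, truncating the unbounded Gaussian part so the Doukhan--Neumann Bernstein inequality applies, controlling the truncation remainder by a Gaussian tail estimate, calibrating $V_1(h')$ against the deviation exponent using $\gamma>2$ and the slack $\delta_n$, and transferring to $\hat r_{h,h'}$ via Young's inequality for the convolved kernel. The only differences are bookkeeping choices (the paper truncates $Y_k$ at $M_n=\sigma\log n+r_{\sup}$ and closes the sum over $\mathcal{H}$ by a geometric series in $(h')^{\gamma/2-1}$ rather than a uniform $n^{-\gamma}$ tail times $|\mathcal{H}|$), and both versions are sound.
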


The proofs of item $(i)$ and $(ii)$ in Proposition \ref{AAPropET_1^2yV_1} are reported to Appendix \ref{AAdemoT1} and Appendix \ref{AAdemoT2} respectively.



\begin{theorem} \textbf{(Oracle Inequality)}\label{AAteo:D-Oraculo}
 Under hypotheses $(H_1)$, $(H_2)$, $(H_3)$, $(H_4)$, and $(H_5)$, the estimator $\hat{r}_{\hat{h}}(x)$ satisfies the following inequality,
\begin{eqnarray} \label{eq:oracle}
    \left( \mathbb{E}\left(r(x)-\hat{r}_{\hat{h}}(x)\right)^2\right)^{\frac{1}{2}} &\leq& \min_{h\in\mathcal{H}} \left( A_6 C(h) + A_7 ( 1 + \delta_n ) \frac{(\log n)^{\frac{1}{2}}}{(nh)^{\frac{1}{2}}} \right) + A_8\frac{(\log n)^\frac{1}{2}}{n^\frac{1}{2}}
\end{eqnarray}
where $A_6 =1+2\|K\|_1$, $A_7 =\sqrt{A_{1} +A_2} +2\sqrt{ 2\gamma A_{1}}( \|K\|_1 + 1)$, and $A_8$ is a positive constant depending on the kernel $K$, of $r_{sup}$, $Q$, $g_{inf}$, $\sigma$, $a$, and $\gamma$.
\end{theorem}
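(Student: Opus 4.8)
The plan is to take Proposition \ref{AAPropDO} as the starting point: it already gives, for every $h\in\mathcal{H}$, an upper bound on $(\mathbb{E}(r(x)-\hat r_{\hat h}(x))^2)^{1/2}$ as a sum of five terms, so the whole task is to bound each of these terms by a quantity of the form $A_6 C(h)+A_7(1+\delta_n)(\log n)^{1/2}/(nh)^{1/2}+A_8(\log n)^{1/2}/n^{1/2}$, uniformly in $h$, and then to take the minimum over $h\in\mathcal{H}$ (the left-hand side being independent of $h$).

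First I would control the leading term $(\mathbb{E}(r(x)-\hat r_h(x))^2)^{1/2}$. Writing the mean squared error as squared bias plus variance and using $\sqrt{a+b}\le\sqrt a+\sqrt b$, the bias at $x$ is bounded by $C(h)$ --- indeed $x\in B(x)$, so Proposition \ref{AApro:sesgo} yields $|K_h\ast r(x)-r(x)|\le C(h)$ --- while Proposition \ref{AApro:var} bounds the variance by $(A_1+A_2(\log n)^{-1/2})/(nh)$. For $n$ large the square root of this variance bound is at most $\sqrt{A_1+A_2}\,(nh)^{-1/2}$, and since $1\le(\log n)^{1/2}$ and $1\le 1+\delta_n$, it is in turn dominated by $\sqrt{A_1+A_2}\,(1+\delta_n)(\log n)^{1/2}/(nh)^{1/2}$. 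Adding the explicit term $2V(h)=2\sqrt{2\gamma A_1}(\|K\|_1+1)(1+\delta_n)(\log n)^{1/2}/(nh)^{1/2}$ present in Proposition \ref{AAPropDO}, the two variance-type contributions combine precisely into $A_7(1+\delta_n)(\log n)^{1/2}/(nh)^{1/2}$, and the bias contribution $C(h)$ combines with the explicit $2\|K\|_1 C(h)$ into $A_6 C(h)=(1+2\|K\|_1)C(h)$.

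For the two stochastic terms I would invoke Proposition \ref{AAPropET_1^2yV_1}: from $\mathbb{E}(T_1^2)\le A_5\log n/n$ and $\mathbb{E}(T_2^2)\le A_5\|K\|_1^2\log n/n$, taking square roots and summing gives $2(\mathbb{E}(T_1^2))^{1/2}+2(\mathbb{E}(T_2^2))^{1/2}\le 2\sqrt{A_5}(1+\|K\|_1)(\log n)^{1/2}/n^{1/2}$, that is, the final term with $A_8=2\sqrt{A_5}(1+\|K\|_1)$. Collecting the three groups produces the claimed bound for each fixed $h\in\mathcal{H}$, and minimizing over $h$ finishes the proof. Since the substantive work --- the decomposition of Proposition \ref{AAPropDO} and the Bernstein-based control of $\mathbb{E}(T_1^2),\mathbb{E}(T_2^2)$ in Proposition \ref{AAPropET_1^2yV_1} --- is already done, this argument is essentially bookkeeping; the only delicate point is the exact matching of constants, which rests on the prior splitting $V=V_1+V_2$ with $V_2=\|K\|_1 V_1$ and on absorbing the spurious $(\log n)^{1/2}$ and $(1+\delta_n)$ factors into the variance bound, legitimate only for $n$ large enough (the threshold inherited from Proposition \ref{AAPropET_1^2yV_1}).
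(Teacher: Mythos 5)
Your proposal is correct and follows essentially the same route as the paper: combine Proposition \ref{AAPropDO} with the bias bound $|K_h\ast r(x)-r(x)|\le C(h)$ and the variance bound of Proposition \ref{AApro:var} (absorbing the $(\log n)^{-1/2}$ factor to get $\sqrt{A_1+A_2}\,(nh)^{-1/2}$), then control the residual terms via Proposition \ref{AAPropET_1^2yV_1} and minimize over $h$. The constant matching is also right; your explicit $A_8=2\sqrt{A_5}(1+\|K\|_1)$ is in fact the correct value following from the square roots, so nothing further is needed.
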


\begin{remark}
In Theorem \ref{AAteo:D-Oraculo}, we can see in \eqref{eq:oracle} that the estimator $\hat{r}_{\hat{h}}$ mimics the "oracle"; i.e., the best possible (but unknown) estimator $\hat{r}_{h}$ in the family $(\hat{r}_h)_{h\in\mathcal{H}}$, which minimizes the sum of bias $C(h)$ and standard deviation. When the family $\left(\hat{r}_h\right)_{h\in \mathcal{H}}$ is wide enough, estimators that satisfy oracle inequalities tend to be adaptive estimators. This is further demonstrated in Theorem \ref{T3:adapt}.
\end{remark}
\begin{proof}
Let us $h\in\mathcal{H}$. Using $|\mathbb{E}(\hat{r}_h (x)) - r(x)|=|K_h \ast r(x) -r(x)| \leq C(h)$ and Proposition \ref{AApro:var}, we have:
\begin{eqnarray} \label{AAECMTilderh}
    \nonumber
    \left(\mathbb{E}\left[ \left(r(x)-\hat{r}_{h}(x)\right)^2\right]\right)^\frac{1}{2} &\leq&  C(h)+
    \left( \frac{A_{1}}{nh}+A_2\frac{(\log n)^{-\frac{1}{2}}}{nh} \right)^\frac{1}{2} \\
    &\leq&  C(h)+ (A_{1}+A_{2})^{1/2} (nh)^{-1/2}
\end{eqnarray}

Now, by Proposition \ref{AAPropDO} and inequality \eqref{AAECMTilderh}, we have:

\begin{eqnarray*}
\nonumber
\lefteqn{\left(\mathbb{E}\left(r(x)-\hat{r}_{\hat{h}}(x)\right)^2\right)^{\frac{1}{2}}}\\
    &\leq&\nonumber C(h)+ (A_{1}+A_{2})^{1/2} (nh)^{-1/2} + 2C(h)\|K\|_1 +2 V(h)  +2\left(\mathbb{E}(T_{1}^{2})\right)^{\frac{1}{2}}+2\left(\mathbb{E}(T_{2}^{2})\right)^{\frac{1}{2}}\\
    &\leq&
    A_6 C(h) + A_7 ( 1 + \delta_n ) \frac{(\log n)^{\frac{1}{2}}}{(nh)^{\frac{1}{2}}} +2\left(\mathbb{E}(T_{1}^{2})\right)^{\frac{1}{2}}+2\left(\mathbb{E}(T_{2}^{2})\right)^{\frac{1}{2}}.
\end{eqnarray*}
The Proposition \ref{AAPropET_1^2yV_1}, whose result holds for a sufficiently large $n$, allows us to obtain that
$$2\left(\mathbb{E}(T_{1}^{2})\right)^{\frac{1}{2}}+2\left(\mathbb{E}(T_{2}^{2})\right)^{\frac{1}{2}}\le  A_8 \frac{(\log n)^\frac{1}{2}}{n^\frac{1}{2}}$$ 
For some constant $A_8$ depending on the kernel $K$, of $r_{sup}$, $Q$, $g_{inf}$, $\sigma$, $a$, and $\gamma$, this allows us to conclude the result.
\end{proof}

\begin{theorem} \label{T3:adapt}\textbf{(Adaptability)} We assume that hypotheses $(H_1)$, $(H_2)$, $(H_3)$, $(H_4)$, and $(H_5)$ are satisfied. We assume that $K$ is of order $m$. Then, for all $0 < \beta \leq m$ and $L > 0$, the estimator $\hat{r}_{\hat{h}}$ satisfies for $r \in \Sigma(\beta, L)$
\begin{equation*}
     \left(\mathbb{E}\left(r(x)-\hat{r}_{\hat{h}}(x)\right)^2\right)^{\frac{1}{2}} \leq C^*\left(\frac{n}{\log n}\right)^{-\frac{\beta}{2\beta +1}}
\end{equation*}
where $C^*$ is a positive constant depending on the kernel $K$, of $\beta$, $L$, $r_{sup}$, $Q$, $g_{inf}$, $g_{sup}$, $\sigma$, $a$, and $\gamma$.
\end{theorem}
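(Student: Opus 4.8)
The plan is to derive the adaptive rate directly from the Oracle Inequality of Theorem \ref{AAteo:D-Oraculo}, by first bounding the deterministic bias term $C(h)$ and then selecting a suitable bandwidth from the grid $\mathcal{H}$. First I would observe that since $r\in\Sigma(\beta,L)$ and $K$ is a kernel of order $m\ge\lfloor\beta\rfloor$ (which holds because $\beta\le m$), Proposition \ref{AApro:sesgo} applies at every base point of $B(x)$: the bound $h^{\beta}A_0$ there does not depend on the base point but only on the Hölder parameters and the kernel, and the support condition $(H_5)$ guarantees $\int|u|^{\beta}|K(u)|\,du<\infty$. Hence
$$C(h)=\max_{u\in B(x)}\left|K_h\ast r(u)-r(u)\right|\le A_0 h^{\beta},\qquad A_0=\frac{L}{\lfloor\beta\rfloor!}\int_{\mathbb{R}}|u|^{\beta}|K(u)|\,du.$$
Substituting this into \eqref{eq:oracle} reduces the problem to controlling $\min_{h\in\mathcal{H}}\left(A_6 A_0 h^{\beta}+A_7(1+\delta_n)(\log n/(nh))^{1/2}\right)$ plus the additive remainder $A_8(\log n)^{1/2}n^{-1/2}$.

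Second, I would balance bias against the variance contribution. Setting $h^{\beta}\asymp(\log n/(nh))^{1/2}$ yields the target bandwidth $h_*=(\log n/n)^{1/(2\beta+1)}=(n/\log n)^{-1/(2\beta+1)}$, at which both terms are of order $(n/\log n)^{-\beta/(2\beta+1)}$. The next point is to check admissibility: a short computation with logarithms shows $h_{\min}=(\log n)^8/n\ll h_*\ll (\log n)^{-2}=h_{\max}$ for $n$ large, since $1/(2\beta+1)<1$ keeps $h_*$ strictly above $h_{\min}$ and strictly below $h_{\max}$. Because $\mathcal{H}$ is a geometric grid of ratio $e$, there exists $\bar h\in\mathcal{H}$ with $h_*/e<\bar h\le h_*$; evaluating the objective at $\bar h$ alters each of the two terms only by a factor depending on $e$ and $\beta$, so $\min_{h\in\mathcal{H}}(\cdots)$ is at most the value at $\bar h$, which is $\le C'(n/\log n)^{-\beta/(2\beta+1)}$.

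Third, I would dispatch the remainder. Comparing exponents, $A_8(\log n)^{1/2}n^{-1/2}$ carries the $n$-exponent $-1/2$, strictly smaller than $-\beta/(2\beta+1)$ because $\beta/(2\beta+1)<1/2$ for every finite $\beta$. Indeed the ratio of the remainder to the rate equals $(\log n)^{1/2-\beta/(2\beta+1)}\,n^{-1/(2(2\beta+1))}\to 0$, so for $n$ large the remainder is absorbed into the rate. Collecting the factors $A_6,A_7,A_8,A_0$, the bounded quantity $(1+\delta_n)$, and the grid constant then gives the claimed bound with $C^*$ depending on the listed quantities.

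The computations are all routine; the only points requiring genuine care, and hence the main (modest) obstacle, are verifying that the oracle-optimal bandwidth $h_*$ remains inside the admissible window $[h_{\min},h_{\max}]$ for large $n$, that the geometric grid of ratio $e$ is fine enough to lose only a constant factor when passing from $h_*$ to $\bar h$, and that the additive $n^{-1/2}$-type remainder is truly negligible against the slower adaptive rate. The hypothesis $\beta\le m$ is used precisely to make Proposition \ref{AApro:sesgo} applicable, while the finiteness of $\beta$ is exactly what secures the strict inequality $\beta/(2\beta+1)<1/2$ needed in the last step.
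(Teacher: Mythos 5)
Your proposal is correct and follows essentially the same route as the paper: bound $C(h)$ by $A_0h^{\beta}$ via Proposition \ref{AApro:sesgo}, pick the grid point just below the balancing bandwidth $(\log n/n)^{1/(2\beta+1)}$ (losing only a factor $e^{\beta}$, resp.\ $e^{1/2}$, in the two terms), and absorb the $A_8(\log n)^{1/2}n^{-1/2}$ remainder using $\beta/(2\beta+1)<1/2$. Your explicit check that the balancing bandwidth lies in $[h_{\min},h_{\max}]$ is a point the paper only states implicitly ("for sufficiently large $n$"), but the argument is the same.
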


\begin{remark}
The estimator $\hat{r}_{\hat{h}}$ converges at a rate of $\left(\frac{n}{\log n}\right)^{-\frac{\beta}{2\beta +1}}$ over the classes of H\"older regularity $\beta$, and the estimator does not depend on $\beta$. This rate is nearly optimal or minimax, except for the logarithmic multiplicative term. This additional logarithmic term is common in adaptive settings and also appears, for example, in \cite{lepski2014adaptive}. This convergence rate result in pointwise risk for adaptive settings is novel in the context of dependent data and generalizes the result of \cite{lepski2014adaptive} obtained for independent data and known explanatory variable density.
\end{remark}
 \begin{proof}
Let $h_{i} = e^{-i}$ for each $i \in \{0,1,2,\cdots,M\}$ and $h_{\beta}=\left(\frac{\log n}{n}\right)^{\frac{1}{2\beta +1}}$. For sufficiently large $n$, there exists $i \in \{0,1,2,\cdots,M\}$ such that,
\begin{equation}\label{AADesigualhBeta}
    h_{i+1}\leq h_{\beta} \leq h_i
\end{equation}
where $h_{i+1}=e^{-i-1}=\frac{e^{-i}}{e}=\frac{h_i}{e}$.

The first term inside the minimum in inequality \eqref{eq:oracle} is bounded. By Proposition \ref{AApro:sesgo}, the bias of the estimator in the bandwidth $h_{i+1}$ satisfies for $u \in B(x)$,
\begin{equation*}
    |K_{h_{i+1}}\ast r(u)-r(u)|\leq A_0 h_{i+1}^{\beta}  
\end{equation*}
Therefore, by using inequality \eqref{AADesigualhBeta}, we have
\begin{equation}\label{AACotaC(hi+1)}
    C(h_{i+1})=\max_{u\in B(x)}\left|K_{h_{i+1}}\ast r(u)-r(u)\right|\leq A_0 h_{i+1}^{\beta}\le 
A_0 h_{\beta}^{\beta}=A_0 \left(\frac{n}{\log n}\right)^{-\frac{\beta}{2\beta +1}}.
\end{equation}
Now we proceed to determine an upper bound for the second term inside the minimum in inequality \eqref{eq:oracle}. From inequality \eqref{AADesigualhBeta}, it follows that $\frac{\log n}{nh_i} \leq \frac{\log n}{nh_{\beta}}$, and furthermore, as $h_{i+1}=\frac{h_i}{e}$, it is established that,
$$\frac{\log n}{n h_{i+1}}=\frac{e \log n}{n h_i} \leq \frac{e \log n}{n h_{\beta}}$$
which implies, using the value of $h_{\beta}$,

\begin{equation}\label{AACotaSegundoTDO}
\left(\frac{\log n}{n h_{i+1}}\right)^{\frac{1}{2}}\leq \frac{e^{\frac{1}{2}} (\log n)^{\frac{1}{2}}}{n^\frac{1}{2}}\frac{1}{h_{\beta}^{\frac{1}{2}}}=
e^{\frac{1}{2}}\left(\frac{\log n}{n}\right)^{\frac{\beta}{2\beta +1}}.
\end{equation}

By inequalities \eqref{AACotaC(hi+1)} and \eqref{AACotaSegundoTDO}, we have that for $h_{i+1} \in \mathcal{H}$,

$$A_6 C(h_{i+1})+A_7 (1+\delta_n ) \frac{(\log(n))^\frac{1}{2}}{(n h_{i+1})^\frac{1}{2}} \leq \left(A_6 A_0 +A_7 (1+\delta_n ) e^{\frac{1}{2}}\right)\left(\frac{n}{\log n}\right)^{-\frac{\beta}{2\beta +1}}.$$
Due to the above and inequality \eqref{eq:oracle}, it follows that,

\begin{eqnarray} \label{AAAdap1}
   \nonumber
    \left( \mathbb{E}\left(r(x)-\hat{r}_{\hat{h}}(x)\right)^2\right)^{\frac{1}{2}} &\leq& \left( A_6 A_0 + A_7 e^\frac{1}{2} (1+\delta_n) \right) \left(\frac{n}{\log n}\right)^{-\frac{\beta}{2\beta +1}} + A_8\frac{(\log n)^\frac{1}{2}}{n^\frac{1}{2}}\\
\end{eqnarray}

Using the fact that $\left(\frac{\log n}{n}\right)^{\frac{1}{2}} \leq \left(\frac{\log n}{n}\right)^{\frac{\beta}{2\beta +1}}$, it follows that
\begin{equation*}
     \left(\mathbb{E}\left(r(x)-\hat{r}_{\hat{h}}(x)\right)^2\right)^{\frac{1}{2}} \leq C^*\left(\frac{n}{\log n}\right)^{-\frac{\beta}{2\beta +1}}
\end{equation*}
where $C^*$ depends on $A_0$, $A_6$, $A_7$, and $A_8$, for $n \geq 3$.
 \end{proof}

\section{Simulation study} \label{AAsimulacion}

In this section, the fitting of the proposed procedure to simulated data is presented. Empirical global and local errors are compared for different sample sizes. Additionally, the procedure used for calibrating the method is outlined. The goal is to estimate the regression function for a weakly dependent process with a known density function.

\subsection{Simulation framework} \label{Aj}

The regression model \(Y_i=r(X_i)+\varepsilon_i\) is proposed for \(i=1,\ldots,n\), where the variables \(\varepsilon_i\) are independent and identically distributed, with a common distribution of \(N(0,\sigma^2)\), and \(\sigma=0.1, 0.5,\) and \(1\). The regression function to be estimated is \(r(x)=0.7x+2e^{-10x^2}\), which, when restricted to the interval \([-c,c]\), satisfies hypothesis \((H_{4})\).
 
A sample path \(\{X_i\}_{i=1}^{n+2q}\) is generated from the process \(\mathbb{X}\) presented in Example \ref{AAProcesoXenL}, where $X_{i}=\left(G^{-1}\circ\Phi_{0,\frac{\rho^2}{1-\phi^2}}\right)(Z_{i})$ with $\rho=1$, $\phi=0.75$ and $c=2$. The \(X_i\) follow a truncated normal distribution, specifically with a probability density function given by \(g(x) = \frac{\phi_{0,1}(x)}{p}\mathbf{1}_{[-2,2]}(x)\), where \(p = \Phi_{0,1}(2) - \Phi_{0,1}(-2)\) and \(x \in \mathbb{R}\). As mentioned in Example \ref{AAProcesoXenL}, the process satisfies \(\mathbb{X}\in\mathcal{L}\), meaning it fulfills hypotheses \((H_{1})\), \((H_{2})\), and \((H_{3})\).

\begin{figure}[htp!] 
	\centering 
	\begin{subfigure}
	\centering
	\includegraphics[width=7.66cm]{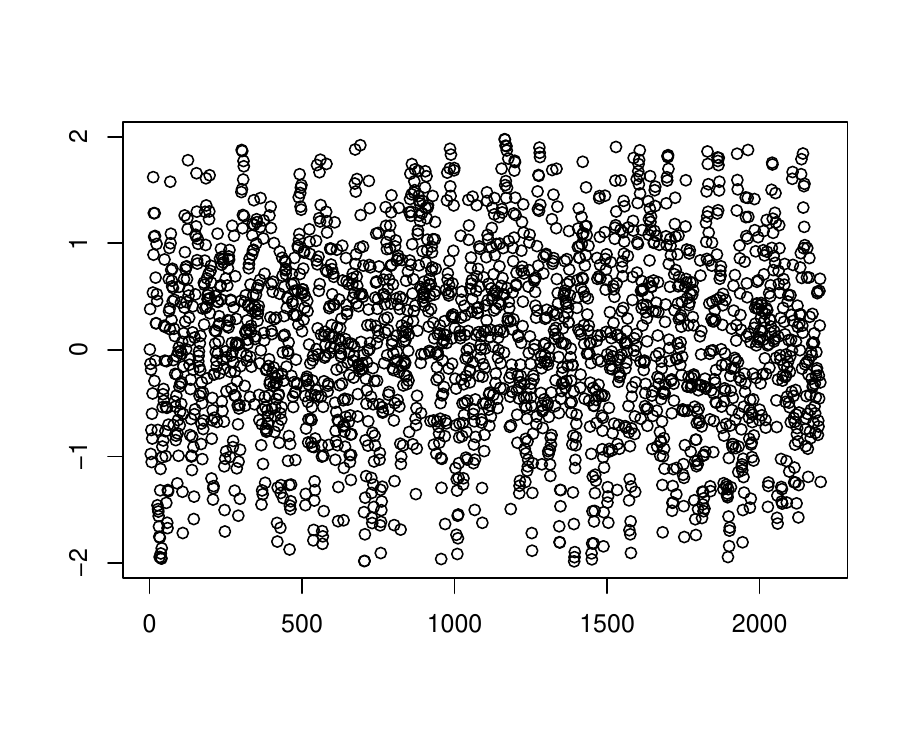}  
	\end{subfigure}
	\begin{subfigure}
	\centering
	\includegraphics[width=7.66cm]{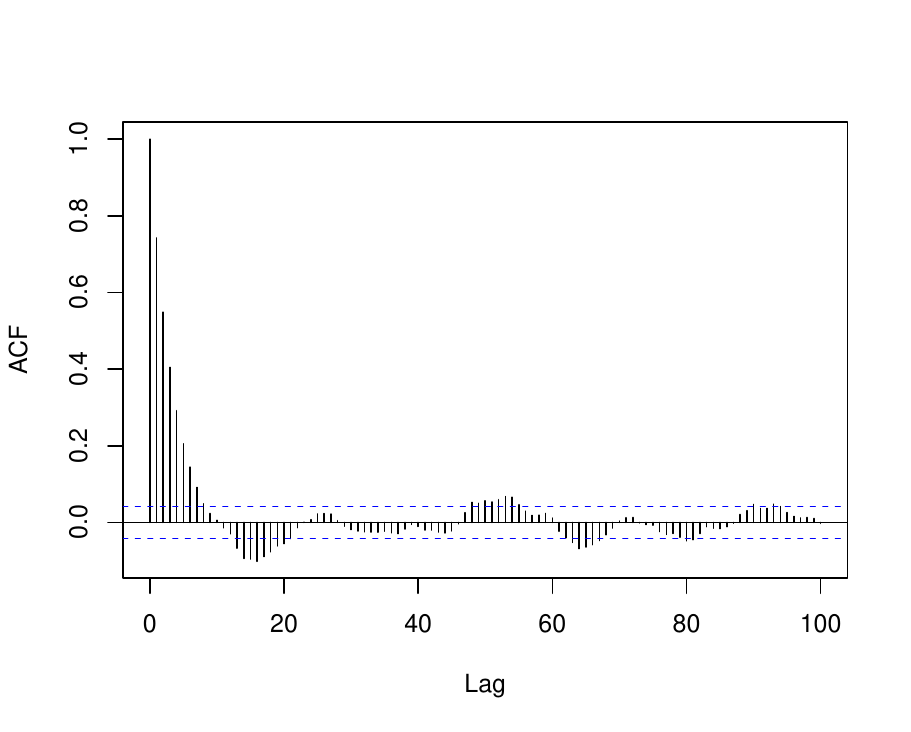}  
	\end{subfigure}
    \vspace{-0.5cm}
	\caption{Scatter plot of $\mathbb{X}$ (left) and sample autocorrelation function $\gamma_{X}$ (right).}\label{XyCovaX}
\end{figure}

Figure \ref{XyCovaX} shows the scatter plot of observations \(\{X_i\}_{i=1}^{n+2q}\) for \(n=2000\) and \(q=100\) (on the left) and the sample autocorrelation function \(\gamma_{X}(k) = \text{corr}(X_{i},X_{i+k})\) for \(k=1,\cdots,100\) (on the right). The exponential decay of correlations can be observed, a characteristic present in $\alpha$-weakly dependent processes.

The first \(n\) observations \(\{(X_i,Y_i)\}_{i=1}^{n}\) constitute the sample used in Section \ref{EstGL} for estimation using the GL method, and the last \(q\) observations \(\{(X_i,Y_i)\}_{i=n+q+1}^{n+2q}\) form the sample used for calibration by selecting an appropriate value of \(\gamma\) as described in Section \ref{Cali}. The idea of leaving a time gap of size \(q\) between the estimation sample and the calibration sample is to ensure that the coefficient of correlation \(\alpha_q(\mathbb{X})\) is sufficiently small, thus reducing the overfitting effect caused by dependence between samples. The control of the \(\alpha_q(\mathbb{X})\) correlation coefficient value is carried out through the hypothesis of exponential decay \((H_{3})\).

\subsection{GL Estimation} \label{EstGL}

The regression function \(r(\cdot)\) is estimated on the evenly spaced grid \(\{x_i\}_{i=1}^{s}\) within the interval \([-1,1]\), where \(x_{i}=-1+2\left(\frac{i-1}{s-1}\right)\) for \(i=1,\cdots,s\), and we consider \(s=21\). To apply the GL method, we consider the random sample \(\{(X_{i},Y_{i})\}_{i=1}^n\) as previously indicated, and determine the estimator \(\left\{ \hat{r}_{\hat{h}_{i}}(x_{i}) \right\}_{i=1}^s\), where

$$
\hat{r}_{\hat{h}_{i}}(x_{i})=\frac{1}{n}\sum_{k=1}^{n}Y_{k}K_{\hat{h}_i}(x_{i}-X_{k})g^{-1}(X_{k}),
$$
The kernel \(K(x) = \frac{1}{\sqrt{2\pi}}e^{-\frac{x^2}{2}}\) is used for all \(x \in \mathbb{R}\). Although it does not have compact support, it has good practical properties. Subsequently, \(K_{\hat{h}_i}(\cdot) = \frac{1}{\hat{h}_i}K\left(\frac{\cdot}{\hat{h}_i}\right)\) is taken, where each bandwidth width \(\hat{h}_i\) is selected from the bandwidth family \(\mathcal{H} = \left\{e^{-0.1j} : j=0,\cdots,\frac{[\log(n)]^{\frac{2}{3}}}{0.1}\right\}\), taking

\begin{equation}\label{hxi}
\hat{h}_{i}=\mbox{arg}\hspace{-0.05cm}\min_{h\in\mathcal{H}}\{A(h,x_{i})+V(h,x_{i})\}
\end{equation}
for each \(i=1,\cdots,s\), with $A(h,x_{i})= \max_{h'\in\mathcal{H}}\{|\hat{r}_{h,h'}(x_{i})-\hat{r}_{h'}(x_{i})|-V(h',x_{i})\}_+$, y $V(h,x_{i})=\sqrt{ 2\gamma A_{1}(i)}( \|K\|_1 + 1)( 1 + \delta_{n} ) \frac{\left(\log n\right)^\frac{1}{2}}{\left(nh\right)^\frac{1}{2}}$, where $\delta_n =(\log n)^{\frac{1}{5}}$, $A_{1}(i) = (g_{\inf}(i))^{-1}\|K\|_2^2((\hat{r}_{\sup}(i))^2+\hat{\sigma}^2)$, $\|K\|_{1}=1$, $\|K\|_{2}=\frac{1}{\sqrt{2\sqrt{\pi}}}$, $g_{inf}(i)= \min\left\{w
>0| w=g(X_{j})\mathbf{1}_{]x_{i}-0.5,x_{i}+0.5[}(X_j),\text{ for }j=1,\cdots,n \right\}$, $\hat{r}_{sup}(i)= \max\left\{|Y_j |\mathbf{1}_{]x_{i}-0.5,x_{i}+0.5[}(X_j)| j=1,\cdots,n \right\}$ and $\hat{\sigma}^2 =\frac{1}{n-1}\sum_{i=1}^n \left(Y_{i}-\tilde{r}_{\Bar{h}}(X_i)\right)^2$ where $\Tilde{r}_{\Bar{h}}(\cdot)$ is the Nadaraya-Watson estimator in the sample $\{(X_{i},Y_{i})\}_{i=1}^n$ with bandwidth $\Bar{h}$. Here, $\Bar{h}$ is the bandwidth resulting from the command $dpill(\cdot,\cdot)$ in $R$ corresponding to the $KernSmooth$ library, a plug-in method by Ruppert, Sheather, and Wand (1995).\\

The parameter \(\gamma\) is a calibration parameter of the method, and its value is determined according to the methodology proposed in Section \ref{Cali}. In Figure \ref{GrafEstima}, the regression function \(r\) and three GL estimators of the regression function for random samples \(\{(Y_{k},X_{k})\}_{k=1}^{n+2q}\) are shown, with \(n=1000\), \(2000\), and \(5000\) taking \(q=100\) and \(\sigma=0.5\).\\

\begin{figure}[h!]
    \begin{center}
        \includegraphics[width=9.9cm]{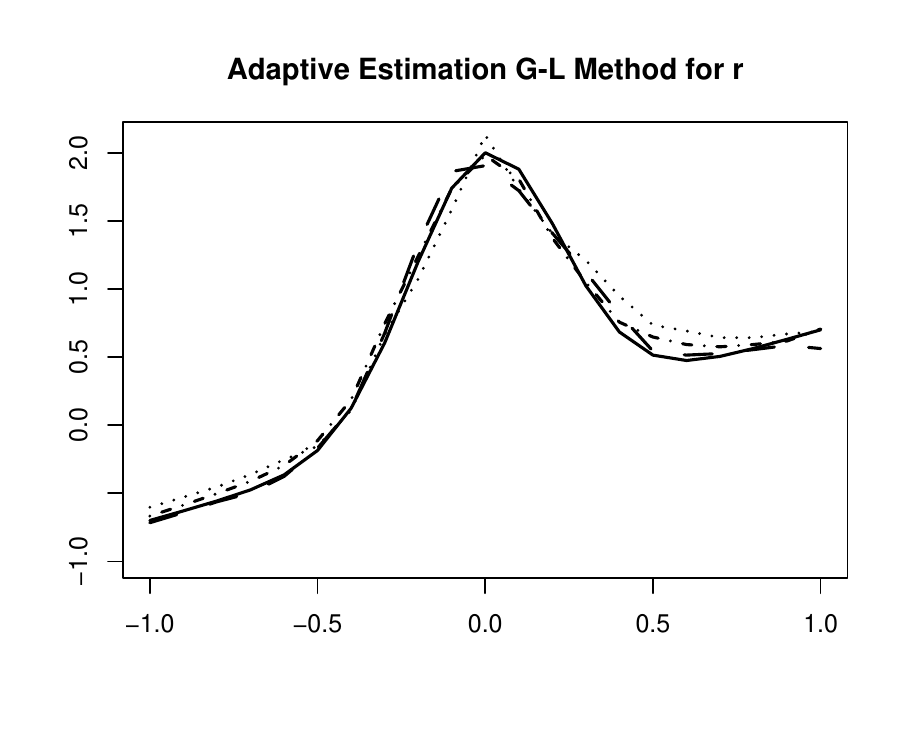}
    \end{center}
    \vspace{-1cm}
    \caption{Three GL estimators of the regression function \(r\) are displayed for random samples \(\{(Y_{k},X_{k})\}_{k=1}^{n+2q}\), with \(\sigma=0.5\) and \(q=100\). When \(n=1000\), the GL estimator is represented with dots; for \(n=2000\), with long dashes; and for \(n=5000\), with dots and short dashes. Additionally, the regression function \(r\) is depicted with a solid line.}
    \label{GrafEstima}
\end{figure}

\subsection{Calibration of the method for a sample.} \label{Cali}

As the estimation of the regression function \(r\) is performed at each point of the grid \(\{x_j\}_{j=1}^{s}\) within the interval \([-1,1]\), and for each \(x_j\), a bandwidth \(\hat{h}_j\) from the bandwidth family \(\mathcal{H}\) is selected, a vector of bandwidths \(\hat{h}=(\hat{h}_{1}, \ldots, \hat{h}_{s})\) is obtained. In general, the GL estimator of \(r\) over the grid \(\{x_i\}_{i=1}^{s}\) with the vector of optimal bandwidths \(\hat{h}=(\hat{h}_{1}, \ldots, \hat{h}_{s})\) is denoted by \(\left(\hat{r}_{\hat{h}_{1}}(x_{1}), \ldots, \hat{r}_{\hat{h}_{s}}(x_{s})\right)\), where each optimal bandwidth is obtained according to Equation \eqref{hxi} with \(V(h,x_{i})\) depending on the parameter \(\gamma>2\). In practice, to calibrate the method, \(\gamma\) is taken on a evenly spaced grid within a subinterval \(I\) of \(]0,1/10[\). For each \(\gamma\), the vector of optimal bandwidths depends on \(\gamma\), and this dependence is denoted as follows: \(\hat{h}^{\gamma}=(\hat{h}_{1}^{\gamma}, \ldots, \hat{h}_{s}^{\gamma})\). The choice of the interval \(I\) is made in such a way that the curve described by the estimators \(\hat{r}_{\hat{h}_{i}^{\gamma}}(x_{i})\), for \(i=1,\ldots,s\), transitions from being very irregular to smooth curves.\\ 

When calibrating the method for a sample \(\{(X_{i},Y_{i})\}_{i=1}^{n+2q}\), the first \(n\) data points are taken from the sample to construct the estimator \(\hat{r}_{h}(x)\). From the last \(q\) data points of the sample \(\{(X_{i},Y_{i})\}_{i=n+q+1}^{n+2q}\), a random grid is constructed within the interval \([-1,1]\) by selecting values that satisfy \(-1\leq X_i \leq 1\). The data are then sorted in ascending order with respect to the first coordinate, resulting in a sample denoted by lowercase letters \(\{(\tilde{x}_{i},\tilde{y}_{i})\}_{i=1}^p\) with \(0<p\leq 100\). The grid \(\{\tilde{x}_i\}_{i=1}^p\) is associated with the vector of optimal bandwidths \(\hat{h}^{\gamma}=(\hat{h}_{1}^{\gamma}, \ldots, \hat{h}_{p}^{\gamma})\) for each \(\gamma \in I\). In this way, GL estimators \(\hat{r}_{\hat{h}_{i}^{\gamma}}(\tilde{x}_{i})\) are obtained for each \(\gamma \in I\) over the grid \(\{\tilde{x}_i\}_{i=1}^p\) with the vector of optimal bandwidths \(\hat{h}^{\gamma}\).\\

The error is determined as \(Error(\gamma) = \sum_{i=1}^p d_{i}\left(\hat{r}_{\hat{h}_i^{\gamma}}(\tilde{x}_{i})-\tilde{y}_{i}\right)^2\) for each \(\gamma \in I\), where \(d_{1}=\frac{\tilde{x}_{1}+\tilde{x}_{2}}{2}+1\), \(d_{p}=1-\frac{\tilde{x}_{p-1}+\tilde{x}_{p}}{2}\), and \(d_{i}=\frac{\tilde{x}_{i+1}-\tilde{x}_{i-1}}{2}\) for \(i\in\{2,\ldots,p-1\}\). Finally, the method is calibrated by selecting the value of \(\gamma\) in \(I\) that minimizes \(Error(\gamma)\).\\

In practice, \(I=]0.00000005,0.05[\) was chosen, and over this interval, an evenly spaced grid \(\gamma_i = 0.00000005+(i-1)\frac{0.05-0.00000005}{20}\) for \(i=1,\ldots,21\) was considered.

\begin{figure}[h!] 
	\centering 
	\begin{subfigure}
	\centering
	\includegraphics[width=7.66cm]{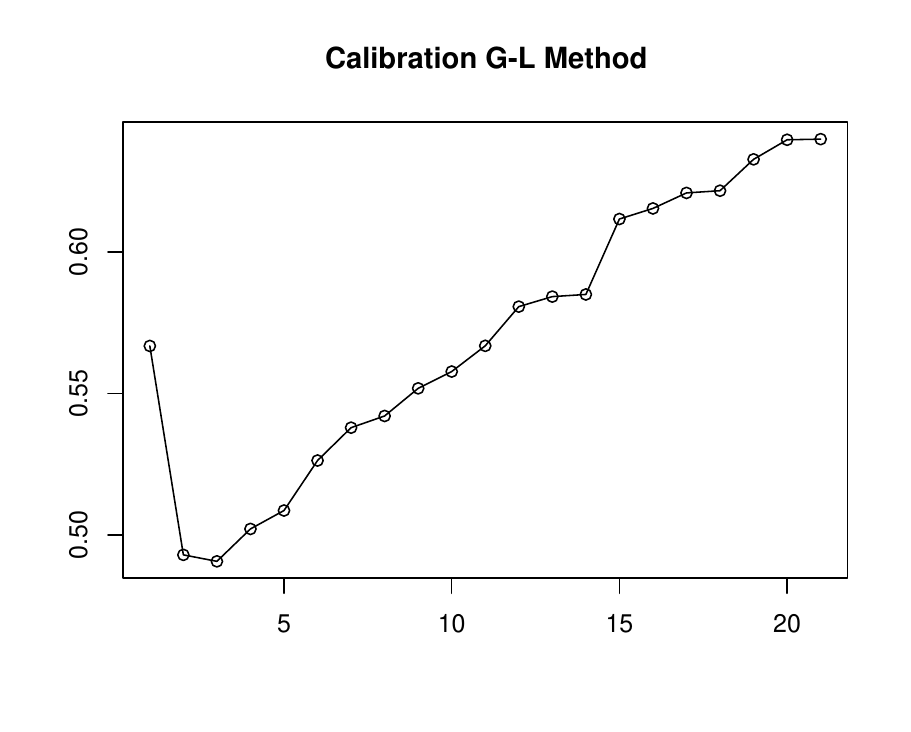}  
	\end{subfigure}
	\begin{subfigure}
	\centering
	\includegraphics[width=7.66cm]{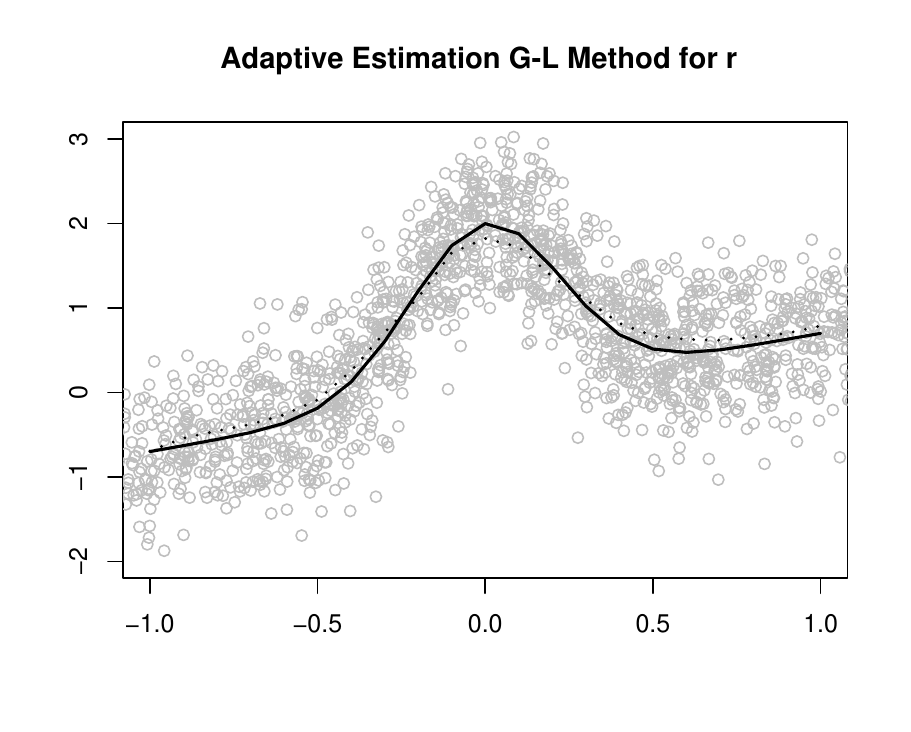}  
	\end{subfigure}
	\caption{On the left, a plot of \(\{Error(\gamma_i)\}_{i=1}^{21}\), and on the right, the point cloud \(\{(X_i ,Y_i )\}_{i=1}^n\) is shown. The regression function \(r\) is represented by a continuous line. Additionally, the plot of \(\{\hat{r}_{\hat{h}_i}(x_i)\}_{i=1}^{21}\) on the interval \([-1,1]\) is depicted with points, calibrated at \(\gamma_3\), where the left plot reaches its minimum. Both were taken with \(n=2000\), \(q=100\), and \(\sigma=0.5\).}\label{Calibracion}
\end{figure}

The Figure \ref{Calibracion}, in the left graph, shows that \(Error(\gamma)\) is minimized at \(\gamma_{3}\). Additionally, on the right side, the point cloud of the estimation sample is displayed. The regression function \(r\) is represented by a continuous line, and the points show the estimator \(\hat{r}_{\hat{h}_i}(x_{i})\) calibrated at \(\gamma_3\), evaluated on the deterministic grid of the interval \([-1,1]\), where \(x_{i}=-1+2\left(\frac{i-1}{s-1}\right)\) for \(i=1,\ldots,s\), with \(s=21\). Both were taken with \(n=2000\), \(q=100\), and \(\sigma=0.5\).

\subsection{Comparison of global and local empirical errors for different sample sizes and values of \texorpdfstring{\(\sigma\)}{sigma} .} \label{EstErrores}

To demonstrate the quality of the regression function estimator $r$, $N=500$ replicas of the sample $\{(X_{i},Y_{i})\}_{i=1}^{n+2q}$ are generated. Subsequently, $N$ calibrated GL estimators $\left\{\hat{r}^{(1)}(x_{i})\right\}_{i=1}^s,\cdots,\left\{\hat{r}^{(N)}(x_{i})\right\}_{i=1}^s$ are calculated, where $x_{i}=-1+2\frac{i-1}{s-1}$, $s=21$, and $\hat{r}^{(j)}(x_{i})=\hat{r}_{\hat{h}_i}(x_{i})$ with $\hat{h}_i$ obtained for the $j$-th replica using the GL method. The mean squared error ($MSE$) and the mean integrated squared error ($MISE$) are estimated as follows:

$$\hat{MSE}(x_{i})=\frac{1}{N}\sum_{j=1}^{N}\left[ \left(r(x_{i})-\hat{r}^{(j)}(x_{i})\right)^2\right],$$
for $i=1,\cdots,s$.

$$\hat{MISE}=\frac{1}{N}\sum_{j=1}^{N}I_j,$$
with $I_{j}=\sum_{i=1}^{s}d_i\left(r(x_{i})-\hat{r}^{(j)}(x_{i})\right)^2$, where $d_{1}=d_{s}=0.05$ and $d_i=0.1$ for $i=2,\cdots,s-1$.

\begin{figure}[htp!] 
	\centering 
	\begin{subfigure}
	\centering
	\includegraphics[width=7.66cm]{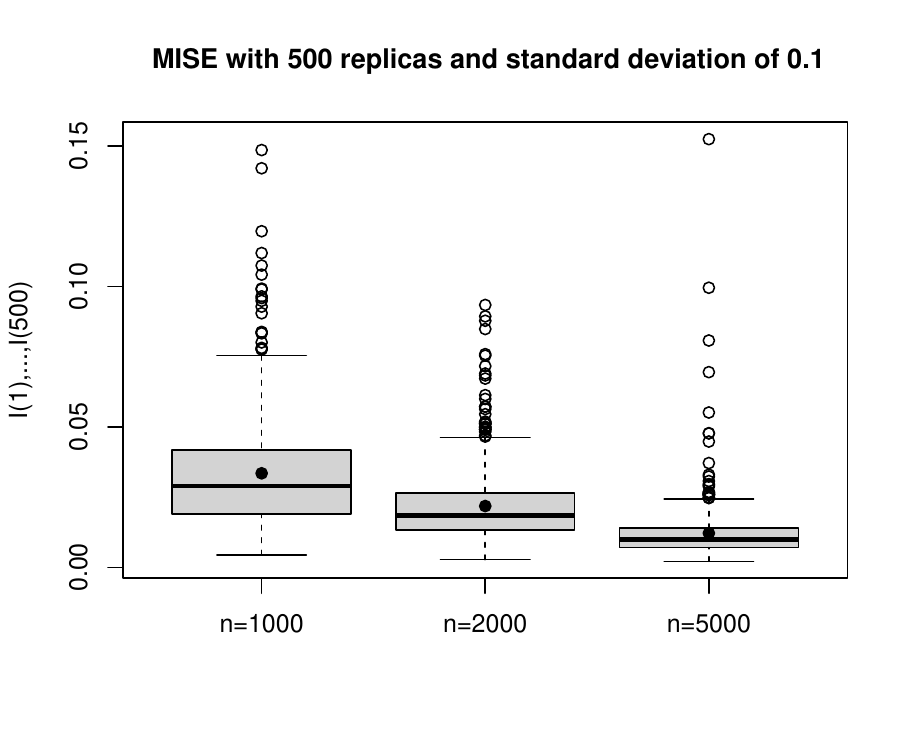}  
	\end{subfigure}
	\begin{subfigure}
	\centering
	\includegraphics[width=7.66cm]{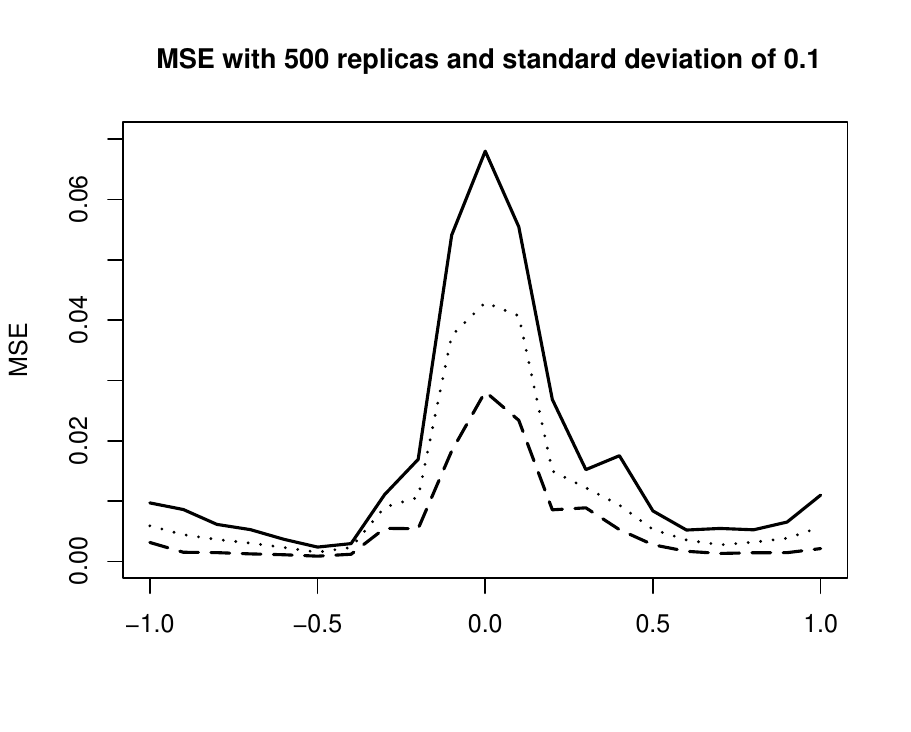}  
	\end{subfigure}
    \vspace{-0.5cm}
	\caption{On the left, three boxplots of $I_j$ for $j=1,\cdots,N$ are shown, with sample sizes $n=1000$, $2000$, and $5000$ from left to right, respectively. On each boxplot, there is a point representing $\hat{MISE}$. On the right, three representations of $\hat{MSE}(x_{i})$ for $i=1,\cdots,s$ are displayed with a line for the sample size $n=1000$, dots for $n=2000$, and dashes for $n=5000$. In both cases, $\sigma=0.1$.}\label{MSEyMISE01}
\end{figure}

In Figure \ref{MSEyMISE01}, three boxplots of $I_j$ are shown for $j=1,\cdots,N$, with sample sizes $n=1000$, $2000$, and $5000$ from left to right, respectively. Additionally, on each boxplot, there is a point representing $\hat{MISE}$ (on the left) and three representations of $\hat{MSE}(x_{i})$ for $i=1,\cdots,s$ with a line for sample size $n=1000$, dots for $n=2000$, and dashes for $n=5000$ (on the right). In both cases, $\sigma=0.1$. In Figures \ref{MSEyMISE05} and \ref{MSEyMISE1}, the same representations are made with $\sigma=0.5$ and $\sigma=1$, respectively.

Upon analyzing the graph corresponding to $\hat{MSE}$ with $\sigma=0.1$ in Figure \ref{MSEyMISE01} (right-hand side plots), it is observed that the $\hat{MSE}$ values are higher for the values of the domain of $r$ near where the maximum is reached and where there is a change in concavity. This pattern is observed for sample sizes $n=1000$, $2000$, and $5000$. Clearly, as the sample size increases, the $\hat{MSE}$ values decrease. Similar trends are observed when analyzing $\hat{MSE}$ for $\sigma=0.5$ and $\sigma=1$ in Figures \ref{MSEyMISE05} and \ref{MSEyMISE1}, respectively (right-hand side plots). Clearly, as the value of $\sigma$ increases, the $\hat{MSE}$ values also increase.

In the boxplots of the values of $I_{j}$ for $j=1,\cdots,N$ with $\sigma=0.1$ corresponding to Figure \ref{MSEyMISE01} (left-hand side plots), it is observed that as the values of $n$ increase from $1000$ to $2000$ and then to $5000$, the dispersion of $\{I_j\}_{j=1}^N$ decreases. The number of outliers also decreases, and the value of $\hat{MISE}$ approaches the median of $\{I_j\}_{j=1}^N$, although generally, the median is always lower than $\hat{MISE}$. In the boxplots of Figures \ref{MSEyMISE05} and \ref{MSEyMISE1} corresponding to $\sigma=0.5$ and $\sigma=1$, respectively, it is observed that as the values of $\sigma$ increase, the dispersion of $\{I_j\}_{j=1}^N$ also increases. The distance between $\hat{MISE}$ and the median of $\{I_j\}_{j=1}^N$ increases, and generally, the median is always lower than the value of $\hat{MISE}$.

\begin{figure}[htp!] 
	\centering 
	\begin{subfigure}
	\centering
	\includegraphics[width=7.66cm]{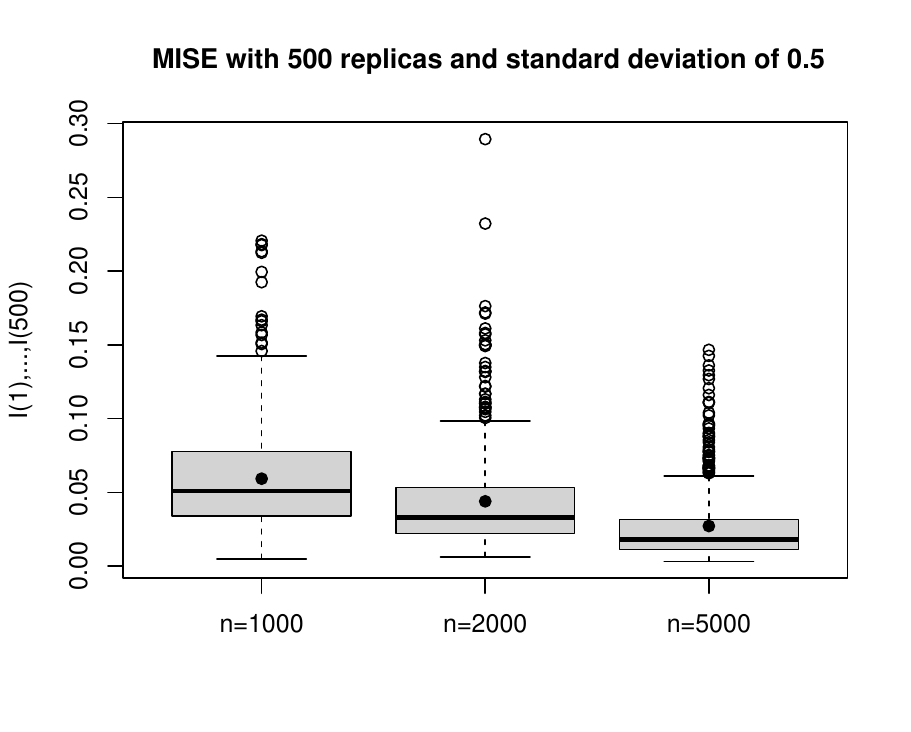}  
	\end{subfigure}
	\begin{subfigure}
	\centering
	\includegraphics[width=7.66cm]{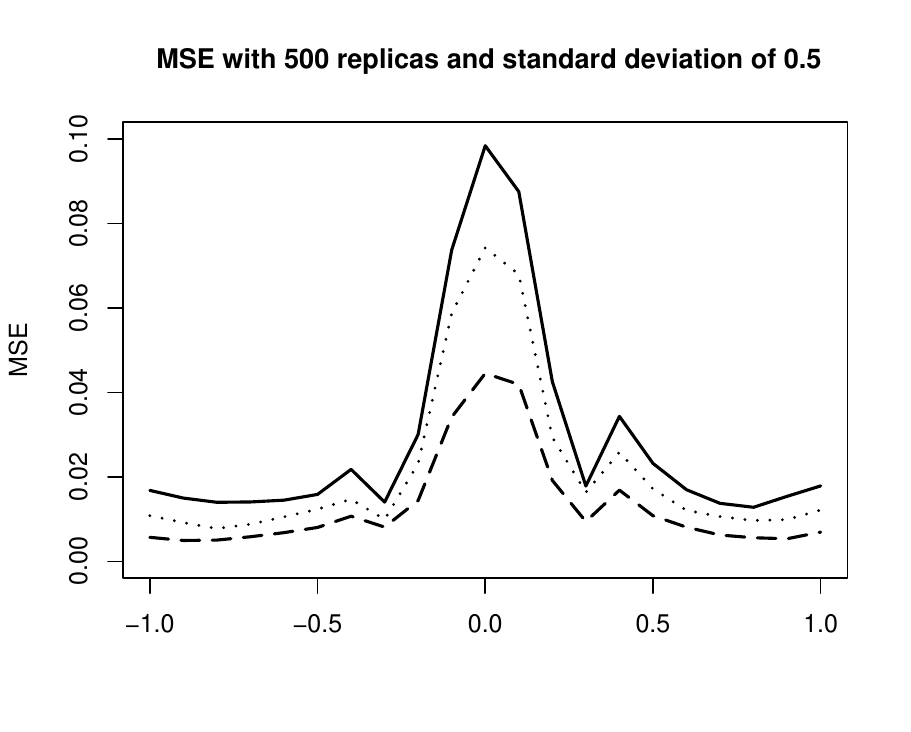}  
	\end{subfigure}
    \vspace{-0.5cm}
	\caption{On the left, three boxplots of $I_j$ for $j=1,\cdots,N$ are shown, with sample sizes $n=1000$, $2000$, and $5000$ from left to right, respectively. On each boxplot, there is a point representing $\hat{MISE}$, and on the right, three representations of $\hat{MSE}(x_{i})$ for $i=1,\cdots,s$ are displayed with a line for the sample size $n=1000$, dots for $n=2000$, and dashes for $n=5000$. In both cases, $\sigma=0.5$.}\label{MSEyMISE05}
\end{figure}

 \begin{figure}[htp!] 
	\centering 
	\begin{subfigure}
	\centering
	\includegraphics[width=7.66cm]{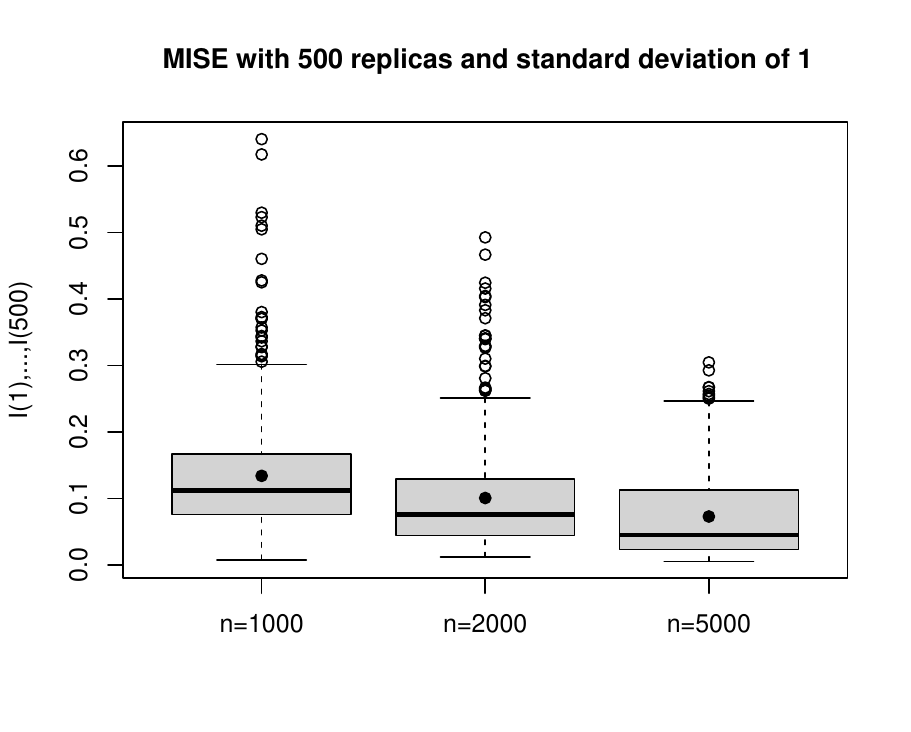}  
	\end{subfigure}
	\begin{subfigure}
	\centering
	\includegraphics[width=7.66cm]{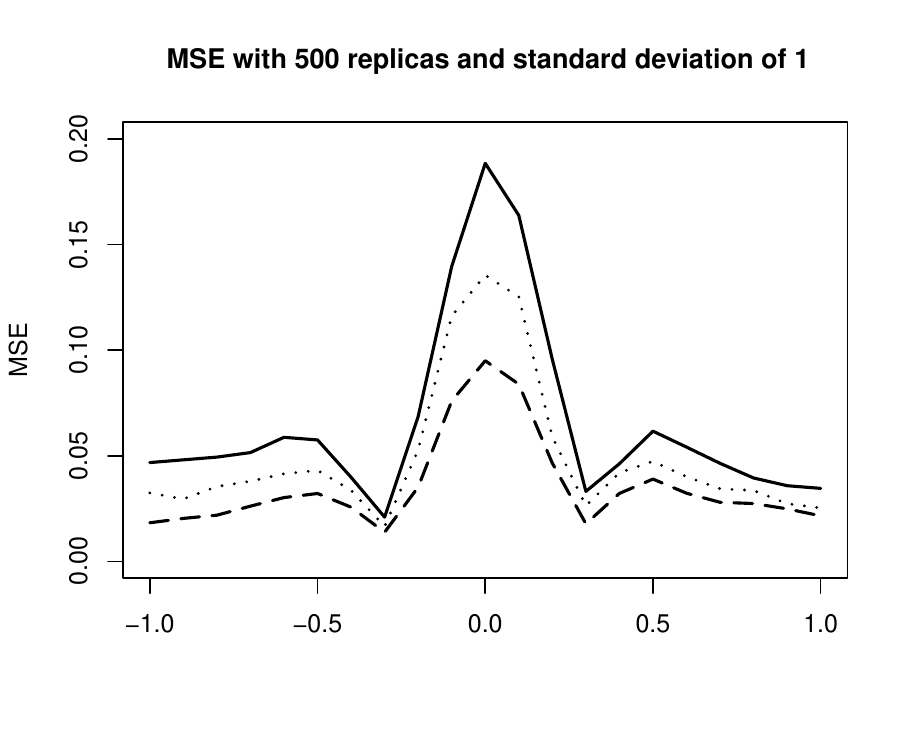}  
	\end{subfigure}
    \vspace{-0.5cm}
	\caption{On the left, three boxplots of $I_j$ for $j=1,\cdots,N$ are shown, with sample sizes $n=1000$, $2000$, and $5000$ from left to right, respectively. On each boxplot, there is a point representing $\hat{MISE}$, and on the right, three representations of $\hat{MSE}(x_{i})$ for $i=1,\cdots,s$ are displayed with a line for the sample size $n=1000$, dots for $n=2000$, and dashes for $n=5000$. In both cases, $\sigma=1$.}\label{MSEyMISE1}
\end{figure}

As observed in Table \ref{AAtab:MISE}, the value of $\hat{MISE}$ is inversely proportional to $n$ and directly proportional to $\sigma$.

\begin{table}[h!] 
\begin{center}
\begin{tabular}{lccc}
\hline
 & $\boldsymbol{n=1000}$ & $\boldsymbol{n=2000}$ & $\boldsymbol{n=5000}$\\
$\boldsymbol{\sigma=0.1}$ & 0.03351511 & 0.02186702 & 0.01219458\\
$\boldsymbol{\sigma=0.5}$ & 0.05932621 & 0.04399132 & 0.02723891\\
$\boldsymbol{\sigma=1}$ & 0.13400681 & 0.10064675 & 0.07281427\\
\hline  
\end{tabular}
\caption{Values of $\hat{MISE}$ for different values of $\sigma$ and sample sizes, known g case.}
\label{AAtab:MISE}
\end{center}
\end{table}

\appendix
\section{Technical results}\label{AARTecnicos}
\vspace{0.3cm}
\subsection{Constants}\label{AAConstantes}
We recall some values of constants given previously, and additionally, the following are constants that we will use in the development of the proofs.
\begin{align*}
&A_0 =\frac{L}{\mathit{l}!}\int_{\mathbb{R}}|u|^{\beta}|K(u)|du, \quad A_{1} = (g_{\inf})^{-1}\|K\|_2^2((r_{\sup})^2+\sigma^2)=B_{1}+B_{2},\\
&  A_2 = \frac{2A_3}{|\log a|} + 20A_4, \quad A_3 = (r_{\sup})^2 \|K\|_1^2 \left((g_{\inf})^{-2}Q+1\right),\\
&  A_4 =\frac{4}{1-a}(r_{\sup})^2(g_{\inf})^{-2}\|K\|_{\infty}^2,\quad   A_5 = B_{9} +2B_{10}, \\
& A_6 =1+2\|K\|_1,\quad  A_7 =\sqrt{A_{1} +A_2} +2\sqrt{ 2\gamma A_{1}}( \|K\|_1 + 1),\quad A_{8}=2(1+\|K\|_1^{2})A_{5},\\
& B=\frac{12(B_{6}\vee \sqrt{B_7})}{1 \wedge |\log a|}\left( \frac{2^{7}3B_{7}a^{-\frac{1}{3}}}{A_{1}(1 \wedge |\log a |)} \vee 1\right), \\
& B_{1} =\sigma^2 (g_{\inf})^{-1}\|K\|_2^2, \quad B_{2} =(r_{\sup})^2(g_{\inf})^{-1}\|K\|^2_2,\quad B_{3} =\left( r_{\sup} + \frac{2\sigma}{\sqrt{2\pi}}\right)\|K\|_1,\\
& B_{4} = \left( r_{\sup} + \frac{2\sigma}{\sqrt{2\pi}}\right)^2 (g_{\inf})^{-2}Q \|K\|_1^2, \quad B_{5} = B_4+B_3^2,\quad B_{6} = 2\|K\|_{\infty}(g_{\inf})^{-1},\\
& B_{7} = (4B_{5} B_{6} (\sigma+r_{\sup}))^{\frac{2}{3}},\quad B_{8} = \frac{2B_{5}}{|\log a|} +\frac{18B_{7}}{1-a^\frac{1}{3}},\\
& B_{9} =  (g_{\inf})^{-2}\|K\|_{\infty}^2 \left\{(8((r_{\sup})^4 +3\sigma^4)\right\}^{\frac{1}{2}}\left(\frac{2}{\sqrt{2\pi}}\right)^{\frac{1}{2}},\\
& 
B_{10} = \frac{2^4 3}{1-e^{-\frac{\gamma-2}{2}}}(A_{1} + B_{8}) + \frac{5!2^{18} 3 B^2}{1-e^{-\frac{\gamma-2}{2}}} (\sigma + r_{\sup})^{2},\\
& C_{*}=A_{0}^{2}+A_{1}+A_{2}  \quad \text{and} \quad C^{*}=A_{0}A_{6}+2e^{-\frac{1}{2}}A_{7}+A_{8}.
\end{align*}

\subsection{Statements of technical results.} \label{AAERT}

\begin{lemma} \label{AAL1}
Under the assumptions $H_1$, $H_2$, $H_4$, and $H_5$, it is obtained that:
\begin{enumerate}
    \item[i)] $\mathbb{E}\left[\left( K_h (x-X_i)\varepsilon_i g^{-1}(X_i) \right)^2\right] \leq \frac{B_{1}}{h}$, 
    \item[ii)] $\mathbb{E}\left[ \left( K_h (x-X_i)r(X_i)g^{-1}(X_i) \right)^2 \right] \leq \frac{B_{2}}{h}$, 
    \item[iii)] $\mathbb{E}\left[ |K_h(x-X_i)K_h(x-X_j)| \right] \leq Q\|K\|_1^2$,
    \item[iv)] $\mathbb{E}\left[ |K_h(x-X_i)r(X_i)g^{-1}(X_i )| \right] \leq r_{\sup} \|K\|_1$,
\end{enumerate}
where $B_{1} =\sigma^2 (g_{\inf})^{-1}\|K\|_2^2, $ and $ B_{2} =(r_{\sup})^2(g_{\inf})^{-1}\|K\|^2_2$.
\end{lemma}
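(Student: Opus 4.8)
The plan is to reduce each of the four expectations to an integral against the relevant (marginal or joint) density, exploit the compact support of $K$ from $(H_5)$ to confine the integration domain to the neighborhood $B(x)$ where the hypotheses $(H_1)$, $(H_2)$, $(H_4)$ provide pointwise bounds, and then finish with the change of variables $t=(x-u)/h$ to recover the norms $\|K\|_1$ and $\|K\|_2$.

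For part (i), I would first use that $\varepsilon_i$ is independent of $X_i$ with $\mathbb{E}[\varepsilon_i^2]=\sigma^2$ to factor the expectation as $\sigma^2\,\mathbb{E}[K_h^2(x-X_i)g^{-2}(X_i)]$. Writing this as $\sigma^2\int K_h^2(x-u)g^{-1}(u)\,du$ — where the density $g$ arising from the law of $X_i$ cancels one power of $g^{-1}$, which is precisely where the known-density structure enters — I would note that by $(H_5)$ the integrand vanishes off $[x-h,x+h]$. Since on this support $u\in B(x)$, hypothesis $(H_1)$ gives $g^{-1}(u)\le g_{\inf}^{-1}$, and the substitution $t=(x-u)/h$ yields $\int K_h^2(x-u)\,du=h^{-1}\|K\|_2^2$. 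Collecting constants gives the bound $B_1/h$. Part (ii) is structurally identical: the expectation equals $\int K_h^2(x-u)r^2(u)g^{-1}(u)\,du$, and on $B(x)$ I would bound $r^2(u)\le r_{\sup}^2$ via $(H_4)$ and $g^{-1}(u)\le g_{\inf}^{-1}$ via $(H_1)$, arriving at $B_2/h$.

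For part (iii), I would express the expectation as the double integral $\iint |K_h(x-u)||K_h(x-v)|\,g_{i,j}(u,v)\,du\,dv$, confine $(u,v)$ to $B(x)\times B(x)$ by the support of $K$, bound the joint density by $Q$ using $(H_2)$, and separate the two integrals; each factor equals $\int|K_h(x-u)|\,du=\|K\|_1$ by the same substitution, giving $Q\|K\|_1^2$. Part (iv) is the simplest: the density again cancels the $g^{-1}$, leaving $\int|K_h(x-u)||r(u)|\,du$, which is bounded by $r_{\sup}\|K\|_1$ on $B(x)$ via $(H_4)$.

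The only point requiring care — rather than a genuine obstacle — is justifying that the effective integration domain $[x-h,x+h]$ lies inside $B(x)=[x-2/(\log n)^2,\,x+2/(\log n)^2]$, so that the hypotheses are applicable. This holds provided $h\le 2/(\log n)^2$, which is guaranteed in every application of the lemma since $h<h_{\max}=1/(\log n)^2$. Everything else is a routine change of variables, and the cancellation $g(u)\cdot g^{-1}(u)=1$ in parts (i), (ii), and (iv) is exactly what keeps the known-density estimator well-behaved.
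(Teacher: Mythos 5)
Your proposal is correct and follows essentially the same route as the paper: rewrite each expectation as an integral against the marginal or joint density (with the factor $g$ cancelling one power of $g^{-1}$ in (i), (ii), (iv)), invoke $(H_1)$, $(H_2)$, $(H_4)$ on $B(x)$, and conclude by the substitution recovering $\|K\|_1$ or $\|K\|_2^2/h$. Your explicit remark that the compact support of $K$ together with $h\le h_{\max}=1/(\log n)^2$ confines the integration to $B(x)$ is a point the paper leaves implicit, so if anything your write-up is slightly more careful.
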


\begin{lemma} \label{AAL2}
 For each $h \in \mathcal{H}$, $a \in ]0,1[$, and $n \geq 4$, it holds that:
 $$\frac{1}{h}a^{\left\{\frac{1}{h|\log a|(\log n)^\frac{1}{2}}\right\}} \leq \frac{10}{(\log n)^\frac{5}{2}}.$$
\end{lemma}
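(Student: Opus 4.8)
The plan is to collapse the whole statement into one elementary single-variable estimate, the key being to exploit the upper cap $h\le h_{\max}=(\log n)^{-2}$ on the bandwidth. First I would clean up the power of $a$. Since $a\in\,]0,1[$ we have $\log a=-|\log a|$, so
\[
a^{\frac{1}{h|\log a|(\log n)^{1/2}}}=\exp\!\left(\frac{\log a}{h|\log a|(\log n)^{1/2}}\right)=\exp\!\left(-\frac{1}{h(\log n)^{1/2}}\right),
\]
and the left-hand side of the claim becomes simply $\frac1h\exp\!\big(-\frac{1}{h(\log n)^{1/2}}\big)$, with no residual dependence on $a$.

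Next I would introduce the change of variable $u:=\frac{1}{h(\log n)^{1/2}}$, so that $\frac1h=u(\log n)^{1/2}$ and the quantity to be bounded is $u(\log n)^{1/2}e^{-u}$. The only structural input needed is that every $h\in\mathcal{H}$ satisfies $h\le h_{\max}=(\log n)^{-2}$; this gives $u\ge(\log n)^{3/2}$, and in particular $(\log n)^3\le u^2$. After multiplying the desired inequality through by the positive factor $(\log n)^{5/2}$, the claim is equivalent to $u(\log n)^3e^{-u}\le 10$. Using $(\log n)^3\le u^2$ I would estimate
\[
u(\log n)^3e^{-u}\le u\cdot u^2 e^{-u}=u^3e^{-u}\le\sup_{t>0}t^3e^{-t}=27e^{-3}<10,
\]
the supremum being attained at $t=3$ by elementary calculus. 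Rewinding the substitution then yields $\frac1h\,a^{\{\cdots\}}\le \frac{27e^{-3}}{(\log n)^{5/2}}\le\frac{10}{(\log n)^{5/2}}$, as required.

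There is essentially no obstacle: the proof is a rewriting followed by the maximisation of $t\mapsto t^3e^{-t}$. The one point worth flagging is conceptual rather than technical, namely that the polynomial-in-$\log n$ decay on the right-hand side comes \emph{entirely} from the upper bound $h\le h_{\max}$, while the lower bound $h_{\min}$ plays no role whatsoever. The hypothesis $n\ge4$ is used only to guarantee that $\log n>0$ (so that $(\log n)^{1/2}$ and the family $\mathcal{H}$ are well defined); the final numerical inequality $27e^{-3}<10$ holds unconditionally.
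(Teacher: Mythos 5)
Your proof is correct, but it follows a genuinely different route from the paper's. The paper uses \emph{both} ends of the bandwidth range: it first bounds $\tfrac1h\le\tfrac{n}{\log n}$ via the lower bound $h\ge h_{\min}$, then rewrites $n=a^{\log_a n}$ so as to merge this factor into the exponent of $a$, arrives at $\tfrac{1}{\log n}\,a^{\frac{(\log n)^{3/2}}{|\log a|}(1-(\log n)^{-1/2})}$, uses $n\ge4$ to ensure $1-(\log n)^{-1/2}\ge\tfrac1{10}$, and finishes with $e^{-x}\le 1/x$. You instead observe that the $|\log a|$ in the exponent cancels exactly against the $\log a$ coming from $a^{x}=e^{x\log a}$, reducing the left-hand side to $\tfrac1h\exp\bigl(-\tfrac{1}{h(\log n)^{1/2}}\bigr)$ with no dependence on $a$ at all, and then you need only the upper bound $h\le(\log n)^{-2}$ together with $\sup_{t>0}t^3e^{-t}=27e^{-3}$. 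Your argument is shorter, yields the sharper constant $27e^{-3}\approx1.34$ in place of $10$, works for every $n\ge2$ (your remark that $n\ge4$ is needed for $\log n>0$ is not quite right --- $n\ge2$ already suffices, so the hypothesis is simply not used), and makes transparent the structural point that the paper's proof obscures: the decay comes entirely from $h\le h_{\max}$, and the lower bound $h_{\min}$, which the paper invokes, is superfluous. The only caveat is interpretive: the braces in the exponent must be read as grouping (not fractional part), which is consistent with how the lemma is applied in the proof of Proposition \ref{AApro:var}, so your reading agrees with the paper's.
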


 \begin{proposition} \label{AAP1}
 Assuming that the $X_i$ satisfy hypothesis $(H_3)$. Let $k, u, v \in \mathbb{N}$, such that ${i_{1:u}} \in \mathbb{Z}^u$, ${j_{1:v}} \in \mathbb{Z}^v$, and $i_1 \leq \cdots \leq i_u < i_u + k \leq j_1 \leq \cdots \leq j_v$. If $G_u : \mathbb{R}^u \times \mathbb{R}^u \rightarrow \mathbb{R}$ and $G_v : \mathbb{R}^v \times \mathbb{R}^v \rightarrow \mathbb{R}$ are functions such that $\|G_u\|_{\infty} < \infty$ and $\|G_v\|_{\infty} < \infty$, then,
 $$|cov\left(G_u(X_{i_{1:u}},\varepsilon_{i_{1:u}} ),G_v (X_{j_{1:v}} ,\varepsilon_{j_{1:v}} )\right)|\leq \Psi (u,v,G_u ,G_v ) a^k$$
 where $\Psi(u,v,G_u , G_v )=4\|G_u \|_{\infty}\|G_v\|_{\infty}$, with $\|G_u\|_{\infty}=\sup_{(x,y)\in\mathbb{R}^u\times\mathbb{R}^u}|G_u (x,y)|$ and $\|G_v\|_{\infty}=\sup_{(x,y)\in\mathbb{R}^v\times\mathbb{R}^v}|G_v (x,y)|$.
 \end{proposition}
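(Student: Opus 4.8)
The plan is to reduce the claim to the pure weak-dependence hypothesis $(H_3)$ for the process $\mathbb{X}$ by conditioning on the noise variables. Two structural features of the model make this work: the vector $X=(X_k)$ is independent of the noise vector $\varepsilon=(\varepsilon_k)$, and the $\varepsilon_k$ are i.i.d.; since the index blocks $i_{1:u}$ and $j_{1:v}$ are disjoint (indeed $i_u<i_u+k\le j_1$), the two noise sub-vectors $\varepsilon_{i_{1:u}}$ and $\varepsilon_{j_{1:v}}$ are independent of one another.

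First I would apply the law of total covariance, conditioning on $Z=(\varepsilon_{i_{1:u}},\varepsilon_{j_{1:v}})$:
$$cov(G_u, G_v) = \mathbb{E}\left[cov\left(G_u,G_v\mid Z\right)\right] + cov\left(\mathbb{E}[G_u\mid Z],\,\mathbb{E}[G_v\mid Z]\right),$$
where I abbreviate $G_u=G_u(X_{i_{1:u}},\varepsilon_{i_{1:u}})$ and $G_v=G_v(X_{j_{1:v}},\varepsilon_{j_{1:v}})$. The whole argument reduces to showing that the first term is bounded by $\Psi(u,v,G_u,G_v)a^k$ and that the second term vanishes.

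For the first term I would freeze the noise: for fixed values of $Z$, set $g_u(\cdot)=G_u(\cdot,\varepsilon_{i_{1:u}})$ and $g_v(\cdot)=G_v(\cdot,\varepsilon_{j_{1:v}})$. Because $X\perp\varepsilon$, the conditional law of $(X_{i_{1:u}},X_{j_{1:v}})$ given $Z$ equals its unconditional law, so $cov(G_u,G_v\mid Z)=cov(g_u(X_{i_{1:u}}),g_v(X_{j_{1:v}}))$. These are bounded functions of the $X$'s alone, with $\|g_u\|_\infty\le\|G_u\|_\infty$ and $\|g_v\|_\infty\le\|G_v\|_\infty$, hence members of the classes $\Lambda_u,\Lambda_v$ appearing in \eqref{AACoefDevil}. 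As the blocks are ordered with gap $q(i_{1:u},j_{1:v})=j_1-i_u\ge k$, the coefficient $\alpha_k(\mathbb{X})$ controls this covariance, so $(H_3)$ yields $|cov(g_u,g_v)|\le 4\|G_u\|_\infty\|G_v\|_\infty a^k=\Psi(u,v,G_u,G_v)a^k$, uniformly in the frozen noise values; taking expectation over $Z$ preserves this bound. For the second term, $X\perp\varepsilon$ forces $\mathbb{E}[G_u\mid Z]=\mathbb{E}_X[G_u(X_{i_{1:u}},\varepsilon_{i_{1:u}})]$, a function of $\varepsilon_{i_{1:u}}$ alone, and likewise $\mathbb{E}[G_v\mid Z]$ depends only on $\varepsilon_{j_{1:v}}$; since these two noise blocks are independent, the covariance of the two conditional expectations is zero. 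Combining the two terms gives the stated bound.

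The main obstacle is bookkeeping rather than a genuine difficulty: one must justify that conditioning on $Z$ leaves the $X$-distribution unchanged (immediate from $X\perp\varepsilon$) and that the frozen maps $g_u,g_v$ genuinely belong to the admissible classes of $(H_3)$, so that the $\alpha_k$ bound applies pointwise in the noise. Measurability and integrability of the conditional quantities are routine consequences of the boundedness of $G_u$ and $G_v$.
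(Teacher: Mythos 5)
Your proof is correct and follows essentially the same route as the paper: condition on the noise, apply the definition of $\alpha_k(\mathbb{X})$ together with $(H_3)$ to the frozen functions $g_u,g_v$, and integrate over the noise. You are in fact slightly more careful than the paper, which simply writes the covariance as $\mathbb{E}\left[cov\left(G_u,G_v\mid \varepsilon_{i_{1:u}},\varepsilon_{j_{1:v}}\right)\right]$ without noting that the second term of the law of total covariance vanishes because $\mathbb{E}[G_u\mid\varepsilon]$ and $\mathbb{E}[G_v\mid\varepsilon]$ depend on disjoint blocks of i.i.d.\ noise --- a point you justify explicitly.
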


Next, we will work with specific functions $G_u$. More precisely, we consider, for $u \in \mathbb{N}$ and ${i_{1:u}} \in \mathbb{Z}^u$, $G_u : \mathbb{R}^u \times \mathbb{R}^u \rightarrow \mathbb{R}$ given by
 $$G_u (X_{i_{1:u}} ,\varepsilon_{i_{1:u}} )=\prod_{k=1}^u\left\{G(X_{i_k},\varepsilon_{i_k})-\mathbb{E}\left(G(X_{i_k},\varepsilon_{i_k})\right)\right\}$$
 where for $i \in \mathbb{N}$,
 $$G(X_{i},\varepsilon_{i}) =\frac{1}{n}(r(X_{i})+\varepsilon_{i})K_h (x-X_i)g^{-1}(X_i )\mathbf{1}_{\{|(r(X_{i})+\varepsilon_{i}|\leq M_n \}},$$
where $M_n = \sigma \log n + r_{\sup}$. The variables $G(X_{i}, \varepsilon_{i})$ satisfy the following results.
 
\begin{lemma} \label{AAL3}
    Under hypotheses $(H_1)$, $(H_2)$, $(H_4)$, and $(H_5)$, it holds that: $\forall i, k \in \mathbb{N}$
 \begin{enumerate}
     \item[i)] $|\mathbb{E}\left[G(X_i ,\varepsilon_i ) \right]| \leq \frac{B_{3}}{n}$,
     \item[ii)] $|\mathbb{E}\left[ G(X_i ,\varepsilon_i )G(X_{i+k} ,\varepsilon_{i+k}) \right]| \leq \frac{B_{4}}{n^2}$,
     \item[iii)] $|Cov\left( G(X_i ,\varepsilon_i ), G(X_{i+k} ,\varepsilon_{i+k}) \right)| \leq \frac{B_{5}}{n^2}$,
     \item[iv)] $|G(X_{i},\varepsilon_{i})| \leq \frac{B_{6}}{2} \frac{M_n}{nh}$
 \end{enumerate}
 where $$B_{3} =\left( r_{\sup} + \frac{2\sigma}{\sqrt{2\pi}}\right)\|K\|_1, \quad B_{4} = \left( r_{\sup} + \frac{2\sigma}{\sqrt{2\pi}}\right)^2 (g_{\inf})^{-2}Q \|K\|_1^2 $$ 
 $$B_{5} = \left( r_{\sup} + \frac{2\sigma}{\sqrt{2\pi}}\right)^2 \|K\|_1^2 \left((g_{\inf})^{-2}Q+1\right) \quad \text{and \ } B_{6} = 2\|K\|_{\infty}(g_{\inf})^{-1}.$$
 \end{lemma}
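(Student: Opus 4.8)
The plan is to prove all four bounds by direct computation, relying on two structural facts used throughout. First, by $(H_5)$ the kernel is supported on $[-1,1]$, so $K_h(x-X_i)$ vanishes unless $X_i\in[x-h,x+h]$; since $h\le h_{\max}=1/(\log n)^2$, this confines the nonzero part of every integrand to $B(x)$, where the bounds $g_{\inf}\le g\le g_{\sup}$ of $(H_1)$, $|g_{i,j}|\le Q$ of $(H_2)$, and $|r|\le r_{\sup}$ of $(H_4)$ all apply. Second, the $\varepsilon_i$ are i.i.d.\ $N(0,\sigma^2)$, independent of the $X_i$ and of one another, with $\mathbb{E}|\varepsilon_i|=2\sigma/\sqrt{2\pi}$; this lets me factorize expectations and use $|r(X_i)+\varepsilon_i|\le r_{\sup}+|\varepsilon_i|$ on the support.

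I would treat $(iv)$ first, since it is the only statement where the truncation is essential: on $\{|r(X_i)+\varepsilon_i|\le M_n\}$ the first factor is at most $M_n$, and combining $|K_h(x-X_i)|\le\|K\|_\infty/h$ with $g^{-1}(X_i)\le (g_{\inf})^{-1}$ yields the constant $\|K\|_\infty(g_{\inf})^{-1}=B_6/2$. For $(i)$ I would pass to $|\mathbb{E}[G]|\le\mathbb{E}|G|$, drop the indicator (which only enlarges a nonnegative integrand), split $|r(X_i)+\varepsilon_i|\le r_{\sup}+|\varepsilon_i|$, and use the independence of $\varepsilon_i$ from $X_i$ together with the density cancellation $\int|K_h(x-u)|g^{-1}(u)g(u)\,du=\|K\|_1$; the two resulting terms are $r_{\sup}\|K\|_1$ and $\mathbb{E}|\varepsilon_i|\,\|K\|_1$, which sum to $B_3/n$.

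Part $(ii)$ follows the same template applied to the product, for $k\ge1$: after discarding both indicators and bounding each factor $|r(X_i)+\varepsilon_i|$, $|r(X_{i+k})+\varepsilon_{i+k}|$ by $r_{\sup}+|\varepsilon_i|$, $r_{\sup}+|\varepsilon_{i+k}|$, the mutual independence of $\varepsilon_i,\varepsilon_{i+k}$ and of both from the $X$'s factorizes the noise part into $(r_{\sup}+2\sigma/\sqrt{2\pi})^2$, while the $X$-part, integrated against the joint density $g_{i,i+k}$ and controlled by $(H_2)$, contributes $Q(g_{\inf})^{-2}\|K\|_1^2$; together this is $B_4/n^2$. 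Finally $(iii)$ is immediate from $\mathrm{Cov}(G,G')=\mathbb{E}[GG']-\mathbb{E}[G]\mathbb{E}[G']$, the triangle inequality, and the bounds of $(i)$ and $(ii)$, using the recorded identity $B_5=B_4+B_3^2$.

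There is no deep obstacle; the main care is required in $(ii)$, where independence must be applied in two stages—separating the noise factors from the $X$-factors, then separating $\varepsilon_i$ from $\varepsilon_{i+k}$, the latter legitimate precisely because $k\ge1$—and where the $X$-part must be integrated against the \emph{joint} density of $(X_i,X_{i+k})$, so that $(H_2)$ rather than a product of marginals is the correct tool. One must also verify at each step that every appeal to $g_{\inf}$, $Q$, and $r_{\sup}$ is licensed by the restriction to $B(x)$ fixed at the outset.
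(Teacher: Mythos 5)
Your proposal is correct and follows essentially the same route as the paper: drop the indicator and split $|r+\varepsilon|\le r_{\sup}+|\varepsilon|$ for (i), expand the product and factor out the independent noise terms before bounding the $X$-part against the joint density via $(H_2)$ for (ii), use $|\mathrm{Cov}|\le|\mathbb{E}[GG']|+|\mathbb{E}[G]||\mathbb{E}[G']|$ with $B_5=B_4+B_3^2$ for (iii), and the direct sup-bounds $\|K\|_\infty/h$ and $(g_{\inf})^{-1}$ together with the truncation level $M_n$ for (iv). Your explicit remarks that the compact support of $K$ and $h\le h_{\max}$ confine everything to $B(x)$, and that $k\ge 1$ is what licenses the two-stage independence argument in (ii), are points the paper leaves implicit but are entirely consistent with its proof.
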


 \begin{proposition} \label{AAP2}
Let $u, v, k \in \mathbb{N}$, and if $(i_1, \ldots, i_u, j_1, \ldots, j_v) \in \mathbb{Z}^{u+v}$ such that $i_1 \leq \cdots \leq i_u < i_u + k \leq j_1 \leq \cdots \leq j_v$. Under hypotheses $(H_1)$, $(H_2)$, $(H_3)$, $(H_4)$, and $(H_5)$, we have, $\forall h \in \mathcal{H}$:
 $$\gamma_h (u,v)=|cov\left(G_u (X_{i_{1:u}} ,\varepsilon_{i_{1:u}}), G_v (X_{j_{1:v}} ,\varepsilon_{j_{1:v}})\right)| \leq \phi (u,v)(D_1(h))^{u+v-2}D_2 (h)  a^{\frac{k}{3}}$$
 where $\phi(u,v)= u+v+uv$,
 $$D_1 (h)=\frac{B_{6} M_n}{nh}, \quad D_2 (h)=\frac{B_{7}}{n^2 h}\quad \text{and \ } B_{7} = (4B_{5} B_{6} (\sigma+r_{\sup}))^{\frac{2}{3}}.$$
 \end{proposition}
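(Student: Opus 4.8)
The plan is to derive two complementary upper bounds for $\gamma_h(u,v)$ and then combine them by a geometric interpolation whose weights produce \emph{both} the decay factor $a^{k/3}$ and the conversion of two sup-norm factors into the single variance factor $D_2(h)$. Throughout I write $\bar G_{i}=G(X_i,\varepsilon_i)-\mathbb{E}[G(X_i,\varepsilon_i)]$, so that $G_u=\prod_{k=1}^u\bar G_{i_k}$ and $G_v=\prod_{l=1}^v\bar G_{j_l}$. Two facts about the individual centered factors feed the whole argument: by Lemma \ref{AAL3}(iv), $\|\bar G_i\|_\infty\le 2\|G(X_i,\varepsilon_i)\|_\infty\le D_1(h)$, and by Lemma \ref{AAL3}(iii) any two distinct factors satisfy $|\mathrm{Cov}(\bar G_a,\bar G_b)|\le B_5/n^2$.

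The first bound (call it Bound~A) comes from weak dependence. Applying Proposition \ref{AAP1} to $G_u$ and $G_v$, together with $\|G_u\|_\infty\le D_1(h)^{u}$ and $\|G_v\|_\infty\le D_1(h)^{v}$, gives
$$\gamma_h(u,v)\le 4\,D_1(h)^{\,u+v}\,a^{k}.$$
The second bound (Bound~B) is the uniform, non-decaying estimate, and it is the combinatorial heart of the proof. Exploiting that each factor is centered, one expands $\mathrm{Cov}(G_u,G_v)$ and checks that every surviving term must couple the two products, hence carries at least one genuine covariance $\mathrm{Cov}(\bar G_a,\bar G_b)$ between two distinct indices; bounding that factor by $B_5/n^2$ through Lemma \ref{AAL3}(iii) and the remaining $u+v-2$ factors by $D_1(h)$ in sup-norm, and counting the admissible terms, yields
$$\gamma_h(u,v)\le c(u,v)\,D_1(h)^{\,u+v-2}\,\frac{B_5}{n^2},$$
where $c(u,v)$ is an explicit combinatorial factor (for $u=v=1$ this reduces to $B_5/n^2$, since covariance is centering-invariant).

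Finally I would interpolate. From $\gamma_h\le\mathrm{A}$ and $\gamma_h\le\mathrm{B}$ one gets $\gamma_h=\gamma_h^{1/3}\gamma_h^{2/3}\le\mathrm{A}^{1/3}\mathrm{B}^{2/3}$. The weight $1/3$ on Bound~A turns $a^{k}$ into $a^{k/3}$; a direct computation shows the exponent of $D_1(h)$ becomes $(u+v)-\tfrac43=(u+v-2)+\tfrac23$, and the leftover $D_1(h)^{2/3}$ combines with $(B_5/n^2)^{2/3}$ to produce $(B_5B_6)^{2/3}M_n^{2/3}/(n^2h^{2/3})$. Since every $h\in\mathcal H$ satisfies $h\le h_{\max}=(\log n)^{-2}$, one has $M_n^2h\le(\sigma\log n+r_{\sup})^2/(\log n)^2\le(\sigma+r_{\sup})^2$, which shows this leftover is at most $4^{-2/3}D_2(h)$ with $D_2(h)=B_7/(n^2h)$ and $B_7=(4B_5B_6(\sigma+r_{\sup}))^{2/3}$; the residual numerical constant $4^{-1/3}c(u,v)^{2/3}$ is then absorbed into $\phi(u,v)=u+v+uv$, giving the claimed bound.

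The main obstacle is Bound~B: making the combinatorial decomposition rigorous, namely showing that centering forces a covariance factor out of the product and then controlling the number of surviving terms so that $4^{-1/3}c(u,v)^{2/3}\le\phi(u,v)$. Everything else is routine once the earlier results are in hand: Bound~A is one application of Proposition \ref{AAP1}, the interpolation is elementary, and the reduction $M_n^2h\le(\sigma+r_{\sup})^2$ on $[0,h_{\max}]$ is a one-line estimate that is precisely what manufactures the constant $B_7$ and the factor $D_2(h)$.
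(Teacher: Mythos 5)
Your overall architecture is exactly the paper's: a weak-dependence bound carrying $a^{k}$ (your Bound~A is the paper's ``structural bound''), a non-decaying bound carrying the factor $B_5/n^2$ (the paper's ``direct bound''), and the geometric interpolation $\gamma_h\le \mathrm{A}^{1/3}\mathrm{B}^{2/3}$, with the reduction $M_n^2h\le(\sigma+r_{\sup})^2$ for $h\le h_{\max}=(\log n)^{-2}$ manufacturing $B_7$ and $D_2(h)$; your arithmetic on that part is correct and reproduces the paper's computation. The gap is Bound~B, which you yourself flag as the unresolved obstacle. The route you propose --- expand $\mathrm{cov}(G_u,G_v)$ and argue that centering forces every surviving term to contain a pairwise covariance $\mathrm{Cov}(\bar G_a,\bar G_b)$ --- is both unproven and risky. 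First, the structural claim is not obviously true: a centered cross-moment such as $\mathbb{E}[\bar G_1\bar G_2\bar G_3]$ does not reduce to terms each carrying a pairwise covariance. Second, a full expansion of a product of $u+v$ centered factors produces a number of terms growing exponentially in $u+v$, so the resulting $c(u,v)$ would not in general satisfy $4^{-1/3}c(u,v)^{2/3}\le u+v+uv$, and the absorption into $\phi(u,v)$ would fail.

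The paper avoids all of this with a much simpler argument. For $u+v\ge 3$ one may assume $u\ge 2$; write $|\mathrm{cov}(G_u,G_v)|\le\Gamma_1+\Gamma_2$ with $\Gamma_1=\big|\mathbb{E}[G_uG_v]\big|$ and $\Gamma_2=\big|\mathbb{E}[G_u]\big|\,\big|\mathbb{E}[G_v]\big|$, and in each term bound all but the two factors $\bar G_{i_1},\bar G_{i_2}$ (taken from the same block, which is possible since $u\ge2$) by $\|\bar G\|_\infty\le D_1(h)$ in sup norm, leaving only the expectation of a product of two centered factors, which Lemma \ref{AAL3} controls by $B_5/n^2$. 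No expansion and no counting are needed; the constant is $c(u,v)=2$, and $4^{-1/3}\,2^{2/3}=1\le\phi(u,v)$, which is precisely why the crude factor $\phi(u,v)=u+v+uv$ suffices in the statement. (The base case $u=v=1$ is handled separately, as you note.) If you replace your Bound~B by this two-term decomposition, the rest of your argument goes through essentially verbatim.
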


 \begin{proposition} \label{AATDBernstein}
\textbf{Bernstein Inequality}. Consider the process $\mathbb{Z} = (Z_i)_{i \in \mathbb{N}}$, where $Z_i = G(X_i, \varepsilon_i) - \mathbb{E}[G(X_i, \varepsilon_i)]$, and $S_n = \sum_{i=1}^n Z_i$. Under hypotheses $(H_1)$, $(H_2)$, $(H_3)$, $(H_4)$, and $(H_5)$, for $h \in \mathcal{H}$ and $\forall t \geq 0$, we have:
\begin{equation} \label{AADBernstein}
    \mathbb{P}(|S_n | \geq t) \leq \exp \left(-\frac{\frac{t^2}{2}}{\mathcal{A}_n +\mathcal{B}_n^\frac{1}{3}t^\frac{5}{3}}\right)
\end{equation}
where $\mathcal{A}_n = \frac{A_{1}}{nh} + B_{8} \frac{(\log n)^{-\frac{1}{2}}}{nh}$ is an upper bound for $Var(S_{n})$, and $\mathcal{B}_n = B \frac{M_n}{nh}$, with
$$B_{8} = \frac{2B_{5}}{|\log a|} +\frac{18B_{7}}{1-a^\frac{1}{3}},\quad  B=\frac{12(B_{6}\vee \sqrt{B_7})}{1 \wedge |\log a|}\left( \frac{2^{7}3B_{7}a^{-\frac{1}{3}}}{A_{1}(1 \wedge |\log a |)} \vee 1\right).$$
\end{proposition}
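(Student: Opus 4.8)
The plan is to obtain \eqref{AADBernstein} as a direct application of the Bernstein-type inequality for weakly dependent sequences established in \cite{doukhan2007probability}, supplying the three hypotheses of that theorem from the technical results already proved. That abstract theorem applies to a centered sequence which is (a) uniformly bounded, (b) weakly dependent in the sense that the covariance of products factorizes through a coefficient of the form $M^{u+v-2}\psi(u,v)\theta_{\mathrm{gap}}$ with a summable decay sequence $\theta$, and (c) equipped with an explicit variance proxy. I would verify each of these for $Z_i = G(X_i,\varepsilon_i)-\mathbb{E}[G(X_i,\varepsilon_i)]$.

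First, the boundedness: Lemma \ref{AAL3}(iv) gives $|G(X_i,\varepsilon_i)| \le \tfrac{B_6}{2}\tfrac{M_n}{nh}$, and since the same bound controls $|\mathbb{E}[G(X_i,\varepsilon_i)]|$, the triangle inequality yields $|Z_i| \le B_6 \tfrac{M_n}{nh} = D_1(h)$, so $D_1(h)$ plays the role of the uniform bound $M$. Next, the weak-dependence condition is exactly Proposition \ref{AAP2}: writing $G_u(X_{i_{1:u}},\varepsilon_{i_{1:u}}) = \prod_{l} Z_{i_l}$, that proposition provides $|\mathrm{cov}(\prod_l Z_{i_l}, \prod_l Z_{j_l})| \le \phi(u,v)(D_1(h))^{u+v-2}D_2(h)\,a^{k/3}$ with $\phi(u,v)=u+v+uv$ and $k \le j_1-i_u$, so the decay sequence is the geometric $\theta_r = a^{r/3}$ and the base amplitude is $D_2(h)$. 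Matching $\psi = \phi$ (polynomial growth in $u+v+uv$), the uniform bound $D_1(h)$, and the geometric $\theta_r$ to the hypotheses of \cite{doukhan2007probability} fixes the free exponent parameter of that theorem to the value $2$, which is precisely what produces the powers $5/3$ on $t$ and $1/3$ on $\mathcal{B}_n$ in \eqref{AADBernstein}; the explicit constant $\mathcal{B}_n = B\,M_n/(nh)$ then results from substituting $D_1(h)$, $D_2(h)$ and the geometric sum $\sum_r a^{r/3} = (1-a^{1/3})^{-1}$ into the constant formula of that theorem.

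It remains to supply the variance proxy $\mathcal{A}_n \ge \mathrm{Var}(S_n)$, which is the one step requiring genuine care. I would decompose $\mathrm{Var}(S_n) = \sum_{i=1}^n \mathrm{Var}(Z_i) + 2\sum_{i<j}\mathrm{Cov}(Z_i,Z_j)$. For the diagonal, $\mathrm{Var}(Z_i) \le \mathbb{E}[G(X_i,\varepsilon_i)^2] \le \tfrac{1}{n^2}\mathbb{E}[(r(X_i)+\varepsilon_i)^2 K_h^2(x-X_i)g^{-2}(X_i)]$; expanding the square and using $\mathbb{E}\varepsilon_i=0$ with the independence of $\varepsilon_i$ and $X_i$ kills the cross term, and Lemma \ref{AAL1}(i)--(ii) bound the two remaining terms by $B_1/h$ and $B_2/h$, giving $\mathrm{Var}(Z_i) \le (B_1+B_2)/(n^2h) = A_1/(n^2h)$ and hence a diagonal contribution of at most $A_1/(nh)$. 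For the off-diagonal part I would bound $2\sum_{i<j}|\mathrm{Cov}(Z_i,Z_j)| \le 2n\sum_{k\ge 1}\sup_i|\mathrm{Cov}(Z_i,Z_{i+k})|$ and split the lag sum at the threshold $k^\ast \sim (h|\log a|(\log n)^{1/2})^{-1}$: below $k^\ast$ I use the uniform bound $|\mathrm{Cov}(Z_i,Z_{i+k})|\le B_5/n^2$ from Lemma \ref{AAL3}(iii), whose sum contributes $\tfrac{2B_5}{|\log a|}\tfrac{(\log n)^{-1/2}}{nh}$, while above $k^\ast$ I use the geometric bound $|\mathrm{Cov}(Z_i,Z_{i+k})|\le 3D_2(h)a^{k/3}$ (Proposition \ref{AAP2} with $u=v=1$), whose tail is controlled through the geometric series together with Lemma \ref{AAL2} and contributes $\tfrac{18B_7}{1-a^{1/3}}\tfrac{(\log n)^{-1/2}}{nh}$. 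Adding the two pieces gives $\mathrm{Var}(S_n) \le A_1/(nh) + B_8(\log n)^{-1/2}/(nh) = \mathcal{A}_n$ with $B_8 = \tfrac{2B_5}{|\log a|}+\tfrac{18B_7}{1-a^{1/3}}$.

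The main obstacle is the faithful instantiation of the abstract theorem of \cite{doukhan2007probability}: one must align its hypotheses (the growth function $\psi$, the uniform bound $M$, and the decay sequence $\theta$) with our $\phi$, $D_1(h)$, $D_2(h)$, and $a^{k/3}$ so that the correct exponent parameter is selected and the constant $B$ emerges as stated, and then track constants through the variance bound. The delicate arithmetic is the cutoff split at $k^\ast$ and the geometric summation, where Lemma \ref{AAL2} is exactly the ingredient that guarantees the tail is of the stated order $(\log n)^{-1/2}/(nh)$ rather than larger.
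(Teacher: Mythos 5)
Your proposal follows essentially the same route as the paper: it invokes Theorem 1 of \cite{doukhan2007probability}, verifies the covariance condition via Proposition \ref{AAP2} and the moment condition via Lemma \ref{AAL3}(iv), and establishes the variance proxy $\mathcal{A}_n$ by the identical diagonal/off-diagonal split at the cutoff $u_n=\left[\frac{1}{h|\log a|(\log n)^{1/2}}\right]$ with the same two bounds $B_5/n^2$ and $3D_2(h)a^{k/3}$. The one point to tighten is the summability hypothesis of that theorem, which is not the plain geometric sum but the factorial-weighted condition $\sum_{s\geq 0}(s+1)^r a^{s/3}\leq L_1L_2^r\,r!$ for all $r\in\mathbb{N}$; the paper verifies it by integration by parts, and it is the resulting constants $L_1,L_2$ (involving $1\wedge|\log a|$ and $a^{-1/3}$, not $(1-a^{1/3})^{-1}$, which instead enters $B_8$) that produce the stated constant $B$ after checking $\mathcal{B}_n^*\leq\mathcal{B}_n$.
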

 
Following the procedure outlined in Lemma 3 of \cite{bertin2017pointwise}, the Bernstein inequality can be reformulated as stated in the following Corollary.
 
 \begin{corollary} \label{AACDBernstein} Under the assumptions of Proposition \ref{AATDBernstein}, if $\lambda(u) = (2\mathcal{A}_n u)^\frac{1}{2} + \mathcal{B}_n (2u)^3$ for each $u \geq 0$ and we make the change of variable $t = \lambda(u)$ in the Bernstein inequality \eqref{AADBernstein}, then this inequality can be rewritten as follows:
 
 $$ \mathbb{P}(|S_n | \geq \lambda(u)) \leq \exp \left(-\frac{u}{2}\right)$$
 for each $u \geq 0$.
 \end{corollary}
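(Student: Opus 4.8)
The plan is to substitute $t=\lambda(u)$ directly into the Bernstein inequality \eqref{AADBernstein} of Proposition \ref{AATDBernstein} and to bound the resulting exponent from below by $u/2$. Since $x\mapsto e^{-x}$ is decreasing, it suffices to establish the deterministic inequality
$$\frac{\lambda(u)^2/2}{\mathcal{A}_n+\mathcal{B}_n^{1/3}\lambda(u)^{5/3}}\ \geq\ \frac{u}{2}, \qquad u\geq 0,$$
which, after clearing denominators, is equivalent to $\lambda(u)^2\geq u\,\mathcal{A}_n+u\,\mathcal{B}_n^{1/3}\lambda(u)^{5/3}$.

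First I would record the two elementary consequences of the definition $\lambda(u)=(2\mathcal{A}_n u)^{1/2}+\mathcal{B}_n(2u)^3$: since both summands are nonnegative, $\lambda(u)\geq(2\mathcal{A}_n u)^{1/2}$ and $\lambda(u)\geq\mathcal{B}_n(2u)^3=8\mathcal{B}_n u^3$. Squaring the first gives $\lambda(u)^2\geq 2\mathcal{A}_n u$, and taking cube roots of the second gives $\lambda(u)^{1/3}\geq 2\mathcal{B}_n^{1/3}u$. These are exactly the two estimates needed to control, respectively, the ``Gaussian'' term $\mathcal{A}_n$ and the ``heavy-tail'' term $\mathcal{B}_n^{1/3}\lambda(u)^{5/3}$ in the denominator.

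I would then split into two cases according to which term dominates the denominator. In the case $\mathcal{A}_n\geq\mathcal{B}_n^{1/3}\lambda(u)^{5/3}$ one has $\mathcal{A}_n+\mathcal{B}_n^{1/3}\lambda(u)^{5/3}\leq 2\mathcal{A}_n$, so that $u(\mathcal{A}_n+\mathcal{B}_n^{1/3}\lambda(u)^{5/3})\leq 2\mathcal{A}_n u\leq\lambda(u)^2$ by the first estimate. In the complementary case $\mathcal{B}_n^{1/3}\lambda(u)^{5/3}\geq\mathcal{A}_n$ one has $\mathcal{A}_n+\mathcal{B}_n^{1/3}\lambda(u)^{5/3}\leq 2\mathcal{B}_n^{1/3}\lambda(u)^{5/3}$, and using $2\mathcal{B}_n^{1/3}u\leq\lambda(u)^{1/3}$ gives $u\cdot 2\mathcal{B}_n^{1/3}\lambda(u)^{5/3}=\lambda(u)^{5/3}\cdot 2\mathcal{B}_n^{1/3}u\leq\lambda(u)^{5/3}\cdot\lambda(u)^{1/3}=\lambda(u)^2$. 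In either case $\lambda(u)^2\geq u(\mathcal{A}_n+\mathcal{B}_n^{1/3}\lambda(u)^{5/3})$, the desired inequality, and substituting back into \eqref{AADBernstein} yields $\mathbb{P}(|S_n|\geq\lambda(u))\leq\exp(-u/2)$.

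There is no genuine analytic obstacle here; the argument is routine once the exponent in the second summand of $\lambda$ is correctly identified. The only point requiring attention is the matching of exponents: the cube in $\mathcal{B}_n(2u)^3$ is precisely the power $1/(2-\tfrac{5}{3})$ that makes the heavy-tail regime scale correctly, while the factors of $2$ inside each summand are what allow the full denominator to be absorbed by a single dominant term in each of the two cases. Finally, noting that $\lambda$ is continuous, strictly increasing, and satisfies $\lambda(0)=0$ confirms that $t=\lambda(u)$ is a legitimate reparametrization of the tail, completing the argument.
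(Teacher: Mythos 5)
Your proof is correct and rests on exactly the same two key estimates as the paper's, namely $\lambda^2(u)\geq 2\mathcal{A}_n u$ and $\lambda^{1/3}(u)\geq 2\mathcal{B}_n^{1/3}u$ (the latter multiplied by $\lambda^{5/3}(u)$); the only difference is that you combine them via a case split on which term dominates the denominator, whereas the paper simply adds the two resulting inequalities $\lambda^2(u)\geq 2u\mathcal{A}_n$ and $\lambda^2(u)\geq 2u\mathcal{B}_n^{1/3}\lambda^{5/3}(u)$ and divides by two. This is a cosmetic variation, so the argument is essentially the same as the paper's.
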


\subsection{Proofs of technical results.} \label{AADRT}
\vspace{0.5cm}
\subsubsection{Proof of Lemma \ref{AAL1}.} 

i) We have
\begin{eqnarray*}
    \nonumber
    \mathbb{E}\left[\left(K_h (x-X_i)\varepsilon_i g^{-1}(X_i) \right)^2\right] &=& \mathbb{E}\left[K_h^2 (x-X_i )\varepsilon_i^2 g^{-2}(X_i) \right]\\
    \nonumber
    &=&
    \frac{\sigma^2}{h^2} \mathbb{E}\left[K^2 \left(\frac{x-X_i}{h}\right) g^{-2}(X_i)\right]\\
    \nonumber
    &=&
    \frac{\sigma^2}{h^2} \int_{\mathbb{R}}K^2 \left(\frac{x-t}{h}\right) g^{-1}(t)dt\\
    \nonumber
    &\leq&
    \frac{\sigma^2 (g_{\inf})^{-1}}{h^2} \int_{\mathbb{R}}K^2 (u)hdu=\frac{B_{1}}{h}.
\end{eqnarray*}
ii) We have
\begin{eqnarray*}
   \nonumber
   \mathbb{E}\left[ \left( K_h (x-X_i)r(X_i)g^{-1}(X_i) \right)^2 \right] &=&  \mathbb{E} \left[\frac{1}{h^2} K^2 \left(\frac{x-X}{h}\right)r^2(X )g^{-2}(X)\right]\\
   \nonumber
   &=&
   \frac{1}{h^2} \int_{\mathbb{R}} K^2 \left(\frac{x-t}{h}\right)r^2(t )g^{-1}(t)dt\\
   \nonumber
   &\leq&
   \frac{(r_{\sup})^2 (g_{\inf})^{-1}}{h^2} \int_{\mathbb{R}} K^2 (u)hdu= \frac{B_{2}}{h}.
\end{eqnarray*}
iii) We have
\begin{eqnarray*}
   \mathbb{E}\left[ |K_h(x-X_i)K_h(x-X_j)| \right] &=& \int\int |K_h(x-t) K_h (x-s)|g_{i,j}(t,s)dtds\\
   &\leq&
   \frac{1}{h^2}\int\int \left| K\left(\frac{x-t}{h}\right) K\left(\frac{x-s}{h}\right)\right|Qdtds\\  
   &\leq&
   \frac{Q}{h^2}\int\int |K(u)| |K(v)|h^2dudv= Q\|K\|_1^2
\end{eqnarray*}
iv) We have
\begin{eqnarray*}
   \mathbb{E}\left[ |K_h(x-X_i)r(X_i)g^{-1}(X_i)| \right]  &\leq& r_{\sup} \mathbb{E}\left[ |K_h(x-X_i)|g^{-1}(X_i) \right]\\
   &=&
    r_{\sup} \int | K_h(x-t)|dt\\
   &=&
 r_{\sup} \int \frac{1}{h}\left| K\left(\frac{x-t}{h}\right) \right|dt= r_{\sup}\|K\|_1
\end{eqnarray*}

\subsubsection{Proof of Lemma \ref{AAL2}.}
As $h \in \mathcal{H}$, we have $\frac{(\log n)^8}{n} \leq h < \frac{1}{(\log n)^2}$, which implies $\frac{\log n}{n} \leq h \leq \frac{1}{(\log n)^2}$. These inequalities are used to bound the expression $\frac{1}{h}a^{\left\{\frac{1}{h|\log a|(\log n)^\frac{1}{2}}\right\}}$.

\begin{eqnarray} \label{AACotaExpr}
   \nonumber
   \frac{1}{h}a^{\left\{\frac{1}{h|\log a|(\log n)^\frac{1}{2}}\right\}} &\leq&  \frac{n}{\log n}a^{\left\{\frac{1}{h} \frac{1}{|\log a|(\log n)^\frac{1}{2}}\right\}}\\
   \nonumber
   &\leq&
   \frac{n}{\log n}a^{\left\{(\log n)^2 \frac{1}{|\log a|(\log n)^\frac{1}{2}}\right\}}\\
    \nonumber
   &=&
   \frac{n}{\log n}a^{\frac{(\log n)^\frac{3}{2}}{|\log a|}}\\
   \nonumber
   &=&
   \frac{1}{\log n}a^{\log_a n}a^{\frac{(\log n)^\frac{3}{2}}{|\log a|}}\\
   \nonumber
   &=&
   \frac{1}{\log n}a^{\frac{(\log n)^\frac{3}{2}}{|\log a|} + \frac{\log n}{\log a}}\\
   &=&
   \frac{1}{\log n}a^{\frac{(\log n)^\frac{3}{2}}{|\log a|}\left\{1 - \frac{1}{(\log n)^\frac{1}{2}}\right\}}
\end{eqnarray}
If $n \geq 4$, then $-\frac{9}{10} < -\frac{1}{(\log n)^\frac{1}{2}} < 0$.\\

When considering $n \geq 4$ in inequality \eqref{AACotaExpr}, we have,
$$\frac{1}{h}a^{\left\{\frac{1}{h|\log a|(\log n)^\frac{1}{2}}\right\}} \leq \frac{1}{\log n}a^{\frac{(\log n)^\frac{3}{2}}{|\log a|}\left\{1 - \frac{9}{10}\right\}} = \frac{1}{\log n}a^{\frac{1}{10} \frac{(\log n)^\frac{3}{2}}{|\log a|}}$$

As $a\in ]0,1[$ and $e^{-x} \leq \frac{1}{x}, \forall x >0$ then,
$$a^{\frac{1}{10} \frac{(\log n)^\frac{3}{2}}{|\log a|}} \leq \left(e^{\log a}\right)^{\frac{1}{10} \frac{(\log n)^\frac{3}{2}}{|\log a|}} \leq e^{\left(-\frac{1}{10} \frac{(\log n)^\frac{3}{2}}{|\log a|}|\log a|\right)} \leq \frac{10}{(\log n)^\frac{3}{2}}$$

From the two previous inequalities, we conclude that,

$$\frac{1}{h}a^{\left\{\frac{1}{h|\log a|(\log n)^\frac{1}{2}}\right\}} \leq \frac{10}{(\log n)^\frac{5}{2}}$$
for $n\geq 4$.

\subsubsection{Proof of Proposition \ref{AAP1}.}
We have 
\begin{equation} \label{AAEqLema1}
cov(G_u (X_{i_{1:u}} ,\varepsilon_{i_{1:u}}),G_v (X_{j_{1:v}} ,\varepsilon_{j_{1:v}})) = \mathbb{E}\left[cov(G_u (X_{i_{1:u}} ,\varepsilon_{i_{1:u}}),G_v (X_{j_{1:v}} ,\varepsilon_{j_{1:v}})|\varepsilon_{i_{1:u}} \varepsilon_{j_{1:v}})\right]. 
\end{equation}
Since the $\varepsilon_i$ are independent of the $X_i$, using equation \eqref{AACoefDevil}, 
\begin{eqnarray*}
|cov(G_u (X_{i_{1:u}} ,\varepsilon_{i_{1:u}}),G_v (X_{j_{1:v}} ,\varepsilon_{j_{1:v}})|\varepsilon_{i_{1:u}} \varepsilon_{j_{1:v}})| &\leq& 4\|G_u (\cdot,\varepsilon_{i_{1:u}} )\|_{\infty}\|G_v (\cdot,\varepsilon_{j_{1:v}} )\|_{\infty}\alpha_k (\mathbb{X})\\
&\leq&
4\|G_u \|_{\infty}\|G_v \|_{\infty}\alpha_k (\mathbb{X}).
\end{eqnarray*}
By hypothesis $(H3)$, the preceding inequality, and equation \eqref{AAEqLema1}, it follows that
\begin{eqnarray*}
|cov(G_u (X_{i_{1:u}} ,\varepsilon_{i_{1:u}}),G_v (X_{j_{1:v}} ,\varepsilon_{j_{1:v}}))| &\leq&4\|G_u \|_{\infty}\|G_v \|_{\infty}a^k.
\end{eqnarray*}

\subsubsection{Proof of Lemma \ref{AAL3}.}
i) We have
 \begin{eqnarray*}
 |\mathbb{E}(G(X_i,\varepsilon_i))| &\leq& \mathbb{E}\left\{\left|\frac{1}{n}(r(X_i)+\varepsilon_i)K_h (x-X_i)g^{-1}(X_i)\mathbf{1}_{\{|r(X_i)+\varepsilon_i|\leq M_n \}}\right|\right\}\\
  &\leq& 
  \frac{1}{n}\mathbb{E}\left\{|r(X_i)+\varepsilon_i||K_h (x-X_i)g^{-1}(X_i)|\right\}\\
  &\leq& 
  \frac{1}{n}\mathbb{E}\left\{|r(X_i)K_h (x-X_i)g^{-1}(X_i)|\right\}+\frac{1}{n}\mathbb{E}\left\{|\varepsilon_i K_h (x-X_i)g^{-1}(X_i)|\right\}\\
  &\leq& 
  \frac{1}{n}r_{\sup}\|K \|_1 + \frac{1}{n}\frac{2\sigma}{\sqrt{2\pi}}\int_{\mathbb{R}}|K_h (x-t)|dt= \frac{B_{3}}{n}
 \end{eqnarray*}
by inequality $iv)$ of Lemma \ref{AAL1}.\\

ii) It is denoted by $A(X_i)=\frac{1}{n}K_h (x-X_i)g^{-1}(X_i)$, whereby,
\begin{eqnarray*}    \lefteqn{\left|\mathbb{E}\left[G(X_i,\varepsilon_i)G(X_{i+k},\varepsilon_{i+k})\right]\right| }\\
&\leq& \mathbb{E}\left[\left|Y_i A(X_i)\mathbf{1}_{\{|Y_i|\leq M_n\}}Y_{i+k}A(X_{i+k})\mathbf{1}_{\{|Y_{i+k}|\leq M_n\}}\right|\right]\\
    &\leq& 
\mathbb{E}\left[|r(X_i)+\varepsilon_i||A(X_i)||r(X_{i+k})+\varepsilon_{i+k}||A(X_{i+k})|\right]\\
     &\leq& 
\mathbb{E}\left[|r(X_i)r(X_{i+k})A(X_i)A(X_{i+k})|\right] + \mathbb{E}\left[|r(X_i)\varepsilon_{i+k}A(X_i)A(X_{i+k})|\right]\\
    &+&
    \mathbb{E}\left[|\varepsilon_i r(X_{i+k})A(X_i)A(X_{i+k})|\right] + \mathbb{E}\left[|\varepsilon_i\varepsilon_{i+k}A(X_i)A(X_{i+k})|\right]\\
     &\leq& 
    (r_{\sup})^2\mathbb{E}\left[|A(X_i)A(X_{i+k})|\right] + r_{\sup}\mathbb{E}[|\varepsilon_{i+k}|]\mathbb{E}\left[|A(X_i)A(X_{i+k})|\right]\\
    &+&
    r_{\sup}\mathbb{E}[|\varepsilon_i|]\mathbb{E}\left[|A(X_i)A(X_{i+k})|\right] + \mathbb{E}[|\varepsilon_i|]\mathbb{E}[|\varepsilon_{i+k}|]\mathbb{E}\left[|A(X_i)A(X_{i+k})|\right]\\
     &=& 
    \left\{(r_{\sup})^2 + 2r_{\sup}\frac{2\sigma}{\sqrt{2\pi}}+\left(\frac{2\sigma}{\sqrt{2\pi}}\right)^2\right\}\mathbb{E}\left[|A(X_i)A(X_{i+k})|\right]\\
    &\leq&
     \left(r_{\sup} + \frac{2\sigma}{\sqrt{2\pi}}\right)^2 (g_{\inf})^{-2}\frac{1}{n^2}\mathbb{E}\left[|K_h(x-X_i)K_h(x-X_{i+k})|\right]\\
      &\leq&
     \left(r_{\sup} + \frac{2\sigma}{\sqrt{2\pi}}\right)^2 (g_{\inf})^{-2}Q\|K\|_1^2\frac{1}{n^2}\\
     &=&
     \frac{B_{4}}{n^2}.
\end{eqnarray*}
iii) Now using (i) and (ii), we have
\begin{eqnarray*}
\lefteqn{\left|Cov\left( G(X_i ,\varepsilon_i ), G(X_{i+k} ,\varepsilon_{i+k}) \right)\right|}\\ 
&\leq&
    |\mathbb{E}[G(X_i,\varepsilon_i)G(X_{i+k},\varepsilon_{i+k})]|+|\mathbb{E}[G(X_i,\varepsilon_i)]||\mathbb{E}[G(X_{i+k},\varepsilon_{i+k})]|\\
    &\leq&
    \frac{B_{4}}{n^2}+\frac{B_{3}}{n}\frac{B_{3}}{n}=
    \frac{B_{5}}{n^2}.
 \end{eqnarray*}
iv) The inequality (iv) is immediate using hypotheses $(H_1)$ and $(H_5)$.

\subsubsection{Proof of Proposition \ref{AAP2}.}

To prove this Proposition, two different bounds for the term

 $$\gamma_h (u,v)=\left|cov\left(G_u (X_{i_{1:u}} ,\varepsilon_{i_{1:u}}), G_v (X_{j_{1:v}},\varepsilon_{j_{1:v}})\right)\right|$$
are determined.
The first bound is obtained through direct calculation, while the second bound is derived from the dependency structure of the observations. Throughout the proof, $l=u+v$ is denoted.\\
 
\textbf{Direct Bound.} The proof of this bound consists of two steps. First, it is assumed that $l=2$, and then the case $l \geq 3$ is considered.\\
 
For $l=2$, we have $u=v=1$, $i_{1:1}=i_1$, and $j_{1:1}=j_1$. Therefore, using inequality $(iii)$ of Lemma \ref{AAL3}, we get
 \begin{equation}  \label{AACotaGammahl=2}
    \gamma_h (1,1) =
    |cov(G(X_{i_1},\varepsilon_{i_1}),G(X_{j_1},\varepsilon_{j_1}))|\leq 
    \frac{B_{5}}{n^2}.
 \end{equation}
 
Now, assume that $l \geq 3$. Without loss of generality, we can take $u \geq 2$ and $v \geq 1$. We have
 \begin{equation} \label{AACotaGammah0l>2}
     \gamma_h (u,v) = \left|cov\left(G_u (X_{i_{1:u}} ,\varepsilon_{i_{1:u}}),G_v (X_{j_{1:v}} ,\varepsilon_{j_{1:v}})\right)\right| \leq \Gamma_1+\Gamma_2
 \end{equation}
where
\begin{itemize}
    \item $\Gamma_1=\left| \mathbb{E}\left[ \prod_{k=1}^u\{ G(X_{i_k},\varepsilon_{i_k})-\mathbb{E}(G(X_{i_k},\varepsilon_{i_k}))\} \prod_{m=1}^v \{ G(X_{j_m},\varepsilon_{j_m})-\mathbb{E}(G(X_{j_m},\varepsilon_{j_m}))\} \right] \right|$
    \item $\Gamma_2=\left| \mathbb{E}\left[ \prod_{k=1}^u G(X_{i_k},\varepsilon_{i_k})-\mathbb{E}(G(X_{i_k},\varepsilon_{i_k})) \right] \mathbb{E}\left[ \prod_{m=1}^v G(X_{j_m},\varepsilon_{j_m})-\mathbb{E}(G(X_{j_m},\varepsilon_{j_m})) \right] \right|$
\end{itemize}
Using the fact that $|G(X_{i},\varepsilon_{i})-\mathbb{E}(G(X_{i},\varepsilon_{i}))| \leq B_{6} \frac{M_n}{nh}$ (see $(iv)$ Lemma \ref{AAL3}) and inequality \eqref{AACotaGammahl=2}, we have,
\begin{eqnarray*}
\Gamma_1 &\leq& \left(B_{6}\frac{M_n}{nh}\right)^{u+v-2}|\mathbb{E}\left\{[G(X_{i_1},\varepsilon_{i_1})-\mathbb{E}(G(X_{i_1},\varepsilon_{i_1}))][G(X_{i_2},\varepsilon_{i_2})-\mathbb{E}(G(X_{i_2},\varepsilon_{i_2}))]\right\}|\\
&\leq&
\left(B_{6}\frac{M_n}{nh}\right)^{l-2}\frac{B_{5}}{n^2}
\end{eqnarray*}

Analogously
\begin{equation*}
\Gamma_2 \leq \left(B_{6}\frac{M_n}{nh}\right)^{l-2}\frac{B_{5}}{n^2}
\end{equation*}

By inequality \eqref{AACotaGammah0l>2} and the bounds obtained for $\Gamma_1$ and $\Gamma_2$, we have $\forall l \geq 2$ that

\begin{equation} \label{AACotaDirecGamma} 
     \gamma_h (u,v) \leq 2\left(B_{6}\frac{M_n}{nh}\right)^{l-2}\frac{B_{5}}{n^2}.
 \end{equation}

\textbf{Structural Bound.} Another bound for $\gamma_h(u,v)$ will be provided using the dependency structure.\\

As $|G(X_{i_k},\varepsilon_{i_k})-\mathbb{E}(G(X_{i_k},\varepsilon_{i_k})) | \leq B_{6}\frac{M_n}{nh}$ for each $k=1,\cdots, u$, then

\begin{equation*}
|G_u (X_{i_{1:u}} ,\varepsilon_{i_{1:u}})| = \prod_{k=1}^u |G(X_{i_k},\varepsilon_{i_k})-\mathbb{E}(G(X_{i_k},\varepsilon_{i_k}))|\leq
\left(B_{6} \frac{M_n}{nh}\right)^u.
\end{equation*}

Therefore, $\|G_u\|_{\infty} \leq \left(B_{6} \frac{M_n}{nh}\right)^u$, and similarly, $\|G_v\|_{\infty} \leq \left(B_{6} \frac{M_n}{nh}\right)^v$. Due to this and Proposition \ref{AAP1}, we have

\begin{eqnarray} \label{AACotaEstrucGammah0}
\nonumber
\gamma_h (u,v) 
&\leq&
4\left(B_{6} \frac{M_n}{nh}\right)^l a^k\\
&=&
\left(B_{6} \frac{M_n}{nh}\right)^{l-2} 4 B_{6}^2 \frac{M_n^2}{n^2 h^2} a^k
\end{eqnarray}
Since $M_n = \sigma \log n + r_{\sup}$ and $h\leq \frac{1}{(\log n)^2}$, then $M_n^2 \leq (\sigma+r_{\sup})^2 (\log n)^2 \leq (\sigma+r_{\sup})^2\frac{1}{h}$. Due to this and inequality \eqref{AACotaEstrucGammah0}, we obtain the structural bound,

\begin{eqnarray} \label{AACotaEstrucGammah}
\gamma_h (u,v)  &\leq& \left( B_{6} \frac{M_n}{nh} \right)^{l-2} \frac{4B_{6}^2 (\sigma+r_{\sup})^2}{n^2 h^3} a^k
\end{eqnarray}

\textbf{Combination of the two bounds:} Now we combine \eqref{AACotaDirecGamma} and \eqref{AACotaEstrucGammah} as follows
\begin{eqnarray*} 
\gamma_h (u,v)  &=& \left(\gamma_h (u,v)\right)^{\frac{2}{3}}\left(\gamma_h (u,v)\right)^{\frac{1}{3}}\\
&\leq&
\left(2\left(B_{6}\frac{Mn}{nh}\right)^{l-2}\frac{B_{5}}{n^2}\right)^{\frac{2}{3}}\left(\left( B_{6} \frac{M_n}{nh} \right)^{l-2} \frac{4B_{6}^2 (\sigma+r_{\sup})^2}{n^2 h^3} a^k\right)^{\frac{1}{3}}\\
&=&
\left( B_{6} \frac{M_n}{nh} \right)^{l-2} (2B_{5})^{\frac{2}{3}}(2B_{6} (\sigma+r_{\sup}))^{\frac{2}{3}} \frac{1}{n^2 h} {a}^{\frac{k}{3}}\\
&=&
\left( B_{6} \frac{M_n}{nh} \right)^{l-2} \frac{B_{7}}{n^2 h} {a}^{\frac{k}{3}}\\
&\leq&
\left( D_1 (h) \right)^{l-2} D_2 (h) \phi (u,v) {a}^{\frac{k}{3}}.
\end{eqnarray*}

\subsubsection{Proof of Proposition \ref{AATDBernstein}.}

We will use Theorem 1 in \cite{doukhan2007probability}. This theorem states that if there exist constants $D_1, D_2, L_1, L_2 < \infty$ and a non-increasing sequence of real coefficients $(\rho_k)_{k\in\mathbb{N}}$ such that for every $u, v \in \mathbb{N}$, $i_{1:u}=(i_1,i_2,\cdots,i_u)\in\mathbb{N}^u$ and  $j_{1:v}=(j_1,j_2,\cdots,j_v)\in\mathbb{N}^v$ with $i_1\leq i_2 \leq \cdots \leq i_u < i_u +k \leq j_1 \leq j_2 \leq \cdots \leq j_v$, the following inequalities are satisfied:
 \begin{equation}\label{AACovDB}
   \left|cov\left(\prod_{s=1}^u Z_{i_s} , \prod_{s=1}^v Z_{j_s}\right)\right| \leq D_2 D_1^{u+v-2}\phi (u,v)\rho_k
\end{equation}
\begin{equation} \label{AASerieDB}
   \sum_{s=0}^\infty (s+1)^r \rho_s \leq L_1 L_2^r r!  , \  \forall r\in\mathbb{N},
\end{equation}
and
\begin{equation} \label{AAMomenDB}
    \mathbb{E}\left|Z_i\right|^r \leq D_1^r ,  \  \forall r\in\mathbb{N},
\end{equation}
then for all $t\geq 0$
\begin{equation*} \label{AADoukDB}
    \mathbb{P}(S_{n}\geq t) \leq \exp{\left( -\frac{\frac{t^{2}}{2}}{\mathcal{A}_{n}+(\mathcal{B}^*_{n})^{\frac{1}{3}}t^{\frac{5}{3}}} \right)}
\end{equation*}
where $\mathcal{A}_n$ is an upper bound for the variance of $S_{n}$ and
\begin{equation}\label{eq:Bn}
\mathcal{B}^*_n = 2(\sqrt{D_2} \vee D_1){L}_2\left(\frac{2^5 nD_2{L}_1}{\mathcal{A}_n}\vee 1\right).
\end{equation}

Next, we will verify that the three inequalities \eqref{AACovDB}, \eqref{AASerieDB}, and \eqref{AAMomenDB} hold. We will demonstrate that $Var(S_n)\le \mathcal{A}_n$ and $\mathcal{B}^*_n\le\mathcal{B}_n$. This implies the result of the proposition.\\

First, using Proposition \ref{AAP2}, we verify that inequality \eqref{AACovDB} is satisfied with $D_1=D_1(h)$, $D_2= D_2(h)$, and the non-increasing sequence $(\rho_k)_{k\in\mathbb{N}}$ where $\rho_k =a^{\frac{k}{3}}$ for $k\in\mathbb{N}$.\\

Now we will show that inequality \eqref{AASerieDB} holds. Let $r\in\mathbb{N}$. We have

\begin{align} \label{AASerieDB_01}
    \sum_{s=0}^\infty(s+1)^r \rho_s &=\sum_{s=0}^\infty (s+1)^r a^\frac{s}{3} = \sum_{s=0}^\infty \int_s^{s+1} (s+1)^r a^\frac{s}{3}dx\nonumber\\
     &\leq \sum_{s=0}^\infty \int_s^{s+1} (x+1)^{r}a^{\frac{x}{3}}a^{-\frac{1}{3}}dx = a^{-\frac{1}{3}}\int_{0}^\infty (x+1)^{r}a^{\frac{x}{3}}dx,
\end{align}
where we use that if $s \leq x \leq s+1$, then $a^{\frac{s}{3}} \leq a^{\frac{x}{3}}a^{-\frac{1}{3}}$ and $(s+1)^r \leq (x+1)^r$.

Applying integration by parts $r$ times, we have that.
\begin{eqnarray*}
\lefteqn{\int_0^\infty (x+1)^r a^\frac{x}{3}dx }\\
     &=&
     -\frac{3}{\log a}+r\left(-\frac{3}{\log a}\right)^2 + r(r-1)\left(-\frac{3}{\log a}\right)^3 + \cdots +r!\left(-\frac{3}{\log a}\right)^{r+1}\\
     &\leq&
     3^{r+1} r! \sum_{j=1}^{r+1}\left(\frac{1}{|\log a|}\right)^j
\end{eqnarray*}

Due to this last inequality and inequality \eqref{AASerieDB_01}, we have that,

\begin{eqnarray*}
    \sum_{s=0}^\infty(s+1)^r \rho_s &\leq& a^{-\frac{1}{3}}3^{r+1} r! \sum_{j=0}^{r+1}\left(\frac{1}{| \log a|}\right)^j\\
    &=&
    a^{-\frac{1}{3}}3^{r+1} r! \sum_{j=0}^{r+1}\frac{1}{| \log a|^j} \frac{2^j}{2^j}\\
    &\leq&
    a^{-\frac{1}{3}}3^{r+1} r! 2^{r+1}\sum_{j=0}^{r+1}\frac{1}{| \log a|^j} \frac{1}{2^j}\\
    &\leq&
    a^{-\frac{1}{3}}6^{r+1} r! \frac{1}{(1 \wedge|\log a|)^{r+1}} \sum_{j=0}^{r+1} \frac{1}{2^j}\\
    &\leq&
    a^{-\frac{1}{3}}\left(\frac{6}{1 \wedge|\log a|}\right)^{r+1}r!2
    =
    L_1 L_2^r r!
\end{eqnarray*}
where $L_1=\frac{12a^{-\frac{1}{3}}}{1 \wedge|\log a|}$ and $L_2 =\frac{6}{1 \wedge|\log a|}$, thus confirming the bound stated in inequality \eqref{AASerieDB}.\\

Inequality \eqref{AAMomenDB} is a consequence of (iv) of Lemma \ref{AAL3}.\\

Now, the bound $\mathcal{A}_{n}$ for the variance $S_{n}$ is determined. Using the fact that $\mathbb{E}[S_n]=0$, we have
\begin{equation} \label{AAVarSnDB0}
    \nonumber
    Var[S_n] =
    \mathbb{E}\left[\left(\sum_{i=1}^n Z_i\right)^2\right]=
    \sum_{i=1}^n \mathbb{E}[Z_i^2]+2 \sum_{i=1}^{n-1}\sum_{j=i+1}^n \mathbb{E}[Z_i Z_j]\leq
    nJ_1 +2 J_2
\end{equation}
where $J_1 = \mathbb{E}[Z_1^2]$ and $J_2 = \sum_{i=1}^{n-1}\sum_{r=1}^{n-i} |\mathbb{E}[Z_i Z_{i+r}]|$. Using (i) and (ii) of Lemma \ref{AAL1}, we have

\begin{eqnarray} \label{AAJ1}
      J_1 
    &\leq&
    \mathbb{E}\left[ (G(X_1 ,\varepsilon_1 ))^2 \right]\nonumber\\
    \nonumber
    &\leq&
    \frac{1}{n^2}\mathbb{E}\left[ (r(X_1)+\varepsilon_1)^2K_h^2 (x-X_1)g^{-2}(X_1) \right]\\
    \nonumber
    &\leq&
    \frac{1}{n^2}\mathbb{E}\left[ (r(X_1))^2 K_h^2 (x-X_1)g^{-2}(X_1) \right] + \frac{1}{n^2}\mathbb{E}\left[(\varepsilon_{1})^{2} K_h^2 (x-X_1)g^{-2}(X_1) \right]\\
     &\leq&
     \frac{1}{n^2}\frac{B_{2}}{h}+\frac{1}{n^2}\frac{B_{1}}{h}=\frac{A_{1}}{n^2 h}.
\end{eqnarray}
Let $(u_n)$ be a sequence such that $1\leq u_n \leq n-1$. We have
\begin{equation} \label{AACota1J2}
J_2 =\sum_{i=1}^{n-1}\left( \sum_{r=1}^{u_n} |Cov (Z_i, Z_{i+r})|+\sum_{r=u_n +1}^{n-i} |Cov (Z_i, Z_{i+r})|\right)
\end{equation}
Using inequality $(iii)$ from Lemma \ref{AAL3}, we obtain that
\begin{equation} \label{AATermino1J2}
   \sum_{r=1}^{u_n} |Cov (Z_i, Z_{i+r})|=\sum_{r=1}^{u_n} |Cov \left( G(X_i,\varepsilon_i ),G(X_{i+r},\varepsilon_{i+r}) \right)|
\leq \sum_{r=1}^{u_n} \frac{B_{5}}{n^2}
=
    B_{5} \frac{u_n}{n^2}
\end{equation}
Using Proposition \ref{AAP2}, we have that
\begin{equation*}
|Cov (Z_i, Z_{i+r})| 
\leq
\phi(1,1)D_1^0(h)D_2(h)\alpha_r^{\frac{1}{3}}
=
3\frac{B_{7}}{n^2 h}a^{\frac{r}{3}}.
\end{equation*}
Therefore, we have
\begin{equation} \label{AATermino2J2}
    \sum_{r=u_n +1}^{n-i} |Cov (Z_i, Z_{i+r})| 
    \leq
    \frac{3B_{7}}{n^2 h} \sum_{r=u_n +1}^{\infty}a^{\frac{r}{3}}
    =
    \frac{3B_{7}}{n^2 h}\frac{(a^\frac{1}{3})^{u_n +1}}{1-a^\frac{1}{3}}
\end{equation}
Due to inequalities \eqref{AACota1J2}, \eqref{AATermino1J2}, and \eqref{AATermino2J2}, we have

\begin{equation}\label{eq:J2}
J_2 \leq
\frac{1}{n}\left( B_{5} u_n +\frac{3B_{7}}{1-a^\frac{1}{3}}\frac{(a^\frac{1}{3})^{u_n +1}}{h} \right)
\end{equation}

In the proof of Proposition \ref{AApro:var}, the variable $u_n$ is chosen as $u_n = \left[\frac{1}{h|\log a|(\log n)^\frac{1}{2}}\right]$, and the facts that $[x] \leq x \leq [x]+1$ and $0<a<1$ are used to further bound the inequality.

\begin{eqnarray} \label{AACota2J2}
    \nonumber
    J_2 &\leq& \frac{1}{n}\left( B_{5} \frac{1}{h|\log a|(\log n)^\frac{1}{2}} +\frac{3B_{7}}{1-a^\frac{1}{3}}\frac{(a^\frac{1}{3})^{\left\{\frac{1}{h|\log a|(\log n)^\frac{1}{2}}\right\}}}{h} \right)\\
    &=&
    \frac{1}{nh}\left( \frac{B_{5}}{|\log a|(\log n)^\frac{1}{2}} +\frac{3B_{7}}{1-a^\frac{1}{3}}(a^\frac{1}{3})^{\left\{\frac{1}{h|\log a|(\log n)^\frac{1}{2}}\right\}} \right)
\end{eqnarray}

Since $h \in \mathcal{H}$, then $h \leq \frac{1}{(\log n)^2}$. Furthermore, due to the fact that $e^{-x}<\frac{1}{x}$ for every $x \in \mathbb{R}$ and $a\in]0,1[$, following a procedure analogous to that carried out in Lemma \ref{AAL2}, we arrive at,

$$ (a^\frac{1}{3})^{\left\{\frac{1}{h|\log a|(\log n)^\frac{1}{2}}\right\}} \leq \frac{3}{(\log n)^\frac{3}{2}}$$
for each $n\geq 2$.\\

Based on the previous result and inequality \eqref{AACota2J2}, we have,

\begin{eqnarray} \label{AAJ2}
    \nonumber
    J_2 &\leq& \frac{1}{nh}\left( \frac{B_{5}}{|\log a|(\log n)^\frac{1}{2}} +\frac{3B_{7}}{1-a^\frac{1}{3}}\frac{3}{(\log n)^\frac{3}{2}} \right)\\
    &\leq&
   \left( \frac{B_{5}}{|\log a|} +\frac{9B_{7}}{1-a^\frac{1}{3}} \right) \frac{(\log n)^{-\frac{1}{2}}}{nh}
\end{eqnarray}

Now, using inequalities \eqref{AAVarSnDB0}, \eqref{eq:J2}, and \eqref{AAJ2}, we obtain the following bound on the variance of $S_n$,
\begin{equation} \label{AAVarSnDB}
    Var[S_n] \leq n\frac{A_{1}}{n^2 h} +2 \left( \frac{B_{5}}{|\log a|} +\frac{9B_{7}}{1-a^\frac{1}{3}} \right) \frac{(\log n)^{-\frac{1}{2}}}{nh}=
    \mathcal{A}_n
\end{equation}
where $\mathcal{A}_n=\frac{A_{1}}{nh} + B_{8} \frac{(\log n)^{-\frac{1}{2}}}{nh}$.\\

Finally, since $\mathcal{A}_n \geq \frac{A_{1}}{nh}$ and considering the expression for $\mathcal{B}^*_n$ given in \eqref{eq:Bn}, we have that

$$\mathcal{B}^*_{n} \leq 2(\sqrt{D_2} \vee D_1){L}_2\left(\frac{2^5 nD_2{L}_1}{\frac{A_{1}}{nh}}\vee 1\right). $$
Using the fact that for large $n$, $M_{n}\geq 1$, we have

$$\sqrt{D_2} \vee D_1\le (\sqrt{B_7} \vee B_6)\frac{M_n}{nh} $$
This allows us to obtain that

$$\mathcal{B}^*_{n} \leq \mathcal{B}_n.$$

\subsubsection{Proof of Corollary \ref{AACDBernstein}.}

Since $\lambda(u)=(2\mathcal{A}_n u)^\frac{1}{2}+\mathcal{B}_n (2u)^3$, then
\begin{equation} \label{AAlam0}
    \lambda^2 (u) =\left[(2\mathcal{A}_n u)^\frac{1}{2}+\mathcal{B}_n (2u)^3\right]^2 \geq 2u \mathcal{A}_n
\end{equation}
analogously

\begin{equation*}
    \lambda^\frac{1}{3} (u) \geq 2u \mathcal{B}_n^\frac{1}{3}
\end{equation*}

Multiplying the previous inequality by $\lambda^\frac{5}{3} (u)$, we obtain,
\begin{equation*}
    \lambda^2 (u) \geq 2u \mathcal{B}_n^\frac{1}{3} \lambda^\frac{5}{3} (u)
\end{equation*}

Now, by adding inequality \eqref{AAlam0} to the previous inequality, we obtain,
\begin{equation*}
    \lambda^2 (u) \geq u \left[ \mathcal{A}_n + \mathcal{B}_n^\frac{1}{3} \lambda^\frac{5}{3} (u) \right]
\end{equation*}

By the previous inequality and taking the variable change $t=\lambda(u)$, it follows that,
\begin{equation*}
    \frac{\frac{t^2}{2}}{\mathcal{A}_n +\mathcal{B}_n^\frac{1}{3}t^\frac{5}{3}} = \frac{1}{2} \frac{\lambda^2 (u)}{\mathcal{A}_n +\mathcal{B}_n^\frac{1}{3}\lambda^\frac{5}{3} (u)} \geq \frac{u}{2}
\end{equation*}

Considering the Bernstein inequality stated in Proposition \ref{AATDBernstein}, the aforementioned inequality and the change of variables $t=\lambda (u)$ imply that,

\begin{equation*}
    \mathbb{P}\left(|S_n | \geq \lambda(u)\right) \leq \exp \left(-\frac{u}{2} \right) 
\end{equation*}
for each $u \geq 0$.

\section{Bias and Variance of the estimator.}\label{AAControl:sesgoYvar}
\vspace{0.5cm}
\subsection{Proof of Proposition \ref{AApro:sesgo}: Bias of the estimator \texorpdfstring{$\hat{r}_h$}{rh}.}
\label{AAdemopro:sesgo}

\begin{eqnarray*}
    \mathbb{E}\{\hat{r}_h (x)\} &=& \mathbb{E}\left\{\frac{1}{n}\sum_{i=1}^n K_h(x-X_i )Y_i  g^{-1}(X_i)\right\}\\
    &=&
    \mathbb{E}\left\{\frac{1}{n}\sum_{i=1}^n K_h(x-X_i )r(X_i)g^{-1}(X_i)\right\} + \mathbb{E}\left\{\frac{1}{n}\sum_{i=1}^n K_h(x-X_i )\varepsilon_i  g^{-1}(X_i)\right\}\\
    &=&
    \mathbb{E}\left\{K_h(x-X)r(X)g^{-1}(X)\right\} + \mathbb{E}\left\{K_h(x-X)g^{-1}(X)\right\} \mathbb{E}\{\varepsilon\}\\
    &=&
    \int_{\mathbb{R}}K_h (x-t)r(t)dt=K_h \ast r(x)
\end{eqnarray*}
Thus, it is obtained that,
\begin{equation*}
 \mathbb{E}[\hat{r}_h (x)]-r(x)=
   K_h \ast r(x)-r(x)=
   \int_{\mathbb{R}}\left(r(x-hu)-r(x)\right)K\left(u\right)du
\end{equation*}
From the Taylor-Lagrange formula up to order $\mathit{l}$ applied to $r(x)$, it follows that,
\begin{equation*}
r(x-hu)=r(x)-hur'(x)+\frac{(-hu)^{2}}{2!}r''(x)-\cdots+\frac{(-hu)^{\mathit{l}-1}}{(\mathit{l}-1)!}r^{(\mathit{l}-1)}(x)+\frac{(-hu)^{\mathit{l}}}{\mathit{l}!}r^{(\mathit{l})}(x-\zeta_u hu)
\end{equation*}
where $\zeta_u \in [0,1]$.\\

Due to the above and since $K$ is a kernel of order $m$ with $m \geq \mathit{l}$, it follows that,
\begin{equation*}
\mathbb{E}[\hat{r}_h (x)]-r(x)=\int_{\mathbb{R}}\frac{(-hu)^{\mathit{l}}}{\mathit{l}!}r^{(\mathit{l})}(x-\zeta_u hu)K(u)du
\end{equation*}

Given that $K$ is a kernel of order $\mathit{l}$, by subtracting the null term $\int_{\mathbb{R}}\frac{(-uh)^{\mathit{l}}}{\mathit{l}!}r^{(\mathit{l})}(x)K(u)du$ from the previous equation, it follows that,
\begin{equation*}
\mathbb{E}[\hat{r}_h (x)]-r(x)=(-1)^l\frac{h^{\mathit{l}}}{\mathit{l}!}\int_{\mathbb{R}}\left(r^{(\mathit{l})}(x-\zeta_u hu)-r^{(\mathit{l})}(x)\right)u^{\mathit{l}}K(u)du
\end{equation*}

Taking the absolute value on both sides and using the hypothesis $r \in \Sigma(\beta, L)$, we have,

\begin{align*}
\left|\mathbb{E}[\hat{r}_h (x)]-r(x)\right| &\leq\frac{h^{\beta}}{\mathit{l}!}L\int_{\mathbb{R}}|u|^{\beta}\zeta_u^{\beta-\mathit{l}}|K(u)|du\\ &\leq\frac{h^{\beta}}{\mathit{l}!}L\int_{\mathbb{R}}|u|^{\beta}|K(u)|du, \quad  \text{pues} \quad   \zeta_u^{\beta-\mathit{l}}<1\\
&\leq h^{\beta}A_{0}.
\end{align*}

\subsection{Proof of Proposition \ref{AApro:var}: Variance of the estimator \texorpdfstring{$\hat{r}_h$}{rh}.} \label{AAdemopro:var}

By substituting $Y_i$ with $r(X_i)+\varepsilon_i$, we obtain,

\begin{eqnarray} \label{AAExpreSigma^2}
\nonumber
\lefteqn{\mathbb{E}\left[(\hat{r}_h (x)-\mathbb{E}[\hat{r}_h (x))^2\right]}\\
    &=&
    \nonumber
    \mathbb{E}\left[\left\{\frac{1}{n}\sum_{i=1}^n K_h(x-X_i )Y_i  g^{-1}(X_i) - \mathbb{E}\left(\frac{1}{n}\sum_{i=1}^n K_h(x-X_i )Y_i  g^{-1}(X_i)\right)\right\}^2\right]\\
    \nonumber
    &=&
    \mathbb{E}\left\{\frac{1}{n}\sum_{i=1}^n \left[ K_h(x-X_i )r(X_i )g^{-1}(X_i) - \mathbb{E}\left(K_h(x-X_i )r(X_i )g^{-1}(X_i)\right)\right] \right.\\
    \nonumber
    &+&
    \left. \frac{1}{n}\sum_{i=1}^n K_h(x-X_i )\varepsilon_i g^{-1}(X_i)\right\}^2\\
    &=&
    I_1 +I_2 +I_3
\end{eqnarray}
where,

\begin{itemize}
     \item $A(X_1, \cdots ,X_n )=\sum_{i=1}^n \left[ K_h(x-X_i )r(X_i )g^{-1}(X_i) - \mathbb{E}(K_h(x-X_i )r(X_i )g^{-1}(X_i))\right]$ 
      \item $I_1 =  \mathbb{E}\left\{\frac{1}{n}\sum_{i=1}^n \left[ K_h(x-X_i )r(X_i )g^{-1}(X_i) - \mathbb{E}\left(K_h(x-X_i )r(X_i )g^{-1}(X_i)\right)\right]\right\}^2$
      \item $I_2 = \mathbb{E}\left\{2 \frac{1}{n}A(X_1, \cdots ,X_n ) \frac{1}{n}\sum_{i=1}^n K_h(x-X_i )\varepsilon_i g^{-1}(X_i) \right\}$
      \item $I_3 =\mathbb{E}\left\{\frac{1}{n}\sum_{i=1}^n K_h(x-X_i )\varepsilon_i g^{-1}(X_i) \right\}^2$
\end{itemize}

We proceed to bound $I_1$, $I_2$, and $I_3$.

\begin{eqnarray} \label{AACotaI3}
    \nonumber
    I_3 
    &=& \frac{1}{n^2}\mathbb{E}\left\{\sum_{i=1}^n K_h(x-X_i )\varepsilon_i g^{-1}(X_i) \right\}^2\\
    \nonumber
    &=&
    \frac{1}{n^2}\sum_{i=1}^n \mathbb{E}\left\{K_h(x-X_i )\varepsilon_i g^{-1}(X_i) \right\}^2 \\
    \nonumber
    && +  \frac{1}{n^2}\sum_{i\neq j}^n \mathbb{E}\left\{K_h(x-X_i )\varepsilon_i g^{-1}(X_i) K_h(x-X_j )\varepsilon_j g^{-1}(X_j)\right\}\\
    \nonumber
    &=&
    \frac{1}{n^2}\sum_{i=1}^n \frac{B_{1}}{h} +  \frac{1}{n^2}\sum_{i\neq j}^n \mathbb{E}\left\{K_h(x-X_i ) g^{-1}(X_i) K_h(x-X_j ) g^{-1}(X_j)\right\}\mathbb{E}( \varepsilon_i\varepsilon_j)\\
    &=&
    \frac{B_{1}}{nh}
\end{eqnarray}
by the first inequality of Lemma \ref{AAL1}. 

\begin{eqnarray} \label{AACotaI2}
   \nonumber
    I_2 &=& \frac{2}{n^2} \mathbb{E}\left\{ A(X_1, \cdots ,X_n )\sum_{i=1}^n K_h(x-X_i )\varepsilon_i g^{-1}(X_i) \right\}\\
    &=&
    \frac{2}{n^2}  \sum_{i=1}^n \mathbb{E}\left\{A(X_1, \cdots ,X_n )K_h(x-X_i ) g^{-1}(X_i) \right\}\mathbb{E}\{\varepsilon_i\}=0
\end{eqnarray}

\begin{eqnarray} \label{AAExpreI1}
   \nonumber
   I_1 &=&  \frac{1}{n^2}\mathbb{E}\left\{\sum_{i=1}^n \left[ K_h(x-X_i )r(X_i )g^{-1}(X_i) - \mathbb{E}(K_h(x-X_i )r(X_i )g^{-1}(X_i))\right]\right\}^2\\
   &\leq&
   I_4 + I_5
\end{eqnarray}

where,

\begin{itemize}
     \item $I_4 =\frac{1}{n^2}\sum_{i=1}^n Var \left[ K_h(x-X_i )r(X_i )g^{-1}(X_i)\right]$ 
      \item $I_5 = \frac{1}{n^2}\sum_{i\neq j}^n \left| Cov \left[K_h(x-X_i )r(X_i )g^{-1}(X_i), K_h(x-X_j )r(X_j )g^{-1}(X_j)\right] \right|$
\end{itemize}

We proceed to bound $I_4$ and $I_5$. By the second inequality of Lemma \ref{AAL1}.

\begin{equation} \label{AACotaI4}
   I_4 \leq  \frac{1}{n^2}\sum_{i=1}^n \mathbb{E} \left[ \left( K_h(x-X_i )r(X_i )g^{-1}(X_i) \right)^2\right]
   \leq
   \frac{B_{2}}{nh} 
\end{equation}

\begin{eqnarray} \label{AAExpreI5}
   \nonumber
   I_5 
    &=&
   \frac{2}{n^2}\sum_{i=1}^{n-1}\sum_{k=1}^{n-i} |Cov (\tilde{X}_i, \tilde{X}_{i+k})|\\
    &=&
   \frac{2}{n^2}\sum_{i=1}^{n-1}\left( \sum_{k=1}^{u_n} |Cov (\tilde{X}_i, \tilde{X}_{i+k})|+\sum_{k=u_n +1}^{n-i} |Cov (\tilde{X}_i, \tilde{X}_{i+k})|\right)
\end{eqnarray}
where $1\leq u_n \leq n-1$ and $\tilde{X}_i =K_h(x-X_i )r(X_i )g^{-1}(X_i)$, $\forall i$.\\

We have, by the third and fourth inequalities of Lemma \ref{AAL1}.
\begin{eqnarray} \label{AACotaI_i}
   \nonumber
   \lefteqn{\sum_{k=1}^{u_n} |Cov (\tilde{X}_i, \tilde{X}_{i+k})| }\\ 
   \nonumber
   &\leq& \sum_{k=1}^{u_n} |\mathbb{E}[\tilde{X}_i \tilde{X}_{i+k}]|+|\mathbb{E}[\tilde{X}_i]\mathbb{E}[\tilde{X}_{i+k}]|\\
   \nonumber
   &\leq&
    \sum_{k=1}^{u_n} \left\{ \mathbb{E}[|K_h(x-X_i )r(X_i )g^{-1}(X_i) K_h(x-X_{i+k} )r(X_{i+k} )g^{-1}(X_{i+k})|] \right.\\
    \nonumber
    &+&
    \left. \left( \mathbb{E}[|\tilde{X}_i|] \right)^2 \right\}\\
    \nonumber
     &\leq&
    \sum_{k=1}^{u_n} \left\{ (r_{\sup})^2(g_{\inf})^{-2}\mathbb{E}[|K_h(x-X_i ) K_h(x-X_{i+k})|]+\left( r_{\sup} \|K\|_1 \right)^2 \right\}\\
     &=&
    \sum_{k=1}^{u_n} \left\{ (r_{\sup})^2(g_{\inf})^{-2}Q\|K\|_1^2 + (r_{sup})^2 \|K\|_1^2 \right\}= A_3 u_n,
\end{eqnarray}

To bound the term $\sum_{k=u_n +1}^{n-i} |Cov (\tilde{X}_i, \tilde{X}_{i+k})|$ in equality \eqref{AAExpreI5}, we use the fact that the $X_{i}$ satisfy $(H_3)$.\\

As,
\begin{eqnarray*}
   |\tilde{X}_i |&=& |K_h (x-X_i )r(X_i)g^{-1}(X_i )|\\
   &\leq&
   r_{\sup}(g_{\inf})^{-1}\left|\frac{1}{h}K\left(\frac{x-X_i}{h}\right)\right|\\
   &\leq&
   \frac{r_{\sup}(g_{\inf})^{-1}\|K\|_{\infty}}{h}
\end{eqnarray*}

By the previous inequality, equation \eqref{AACoefDevil}, and hypothesis $(H_{3})$,
\begin{eqnarray*}
   |cov(\tilde{X}_i ,\tilde{X}_{i+k})| &\leq& 4 a^k \frac{(r_{\sup})^2(g_{\inf})^{-2}\|K\|_{\infty}^2}{h^2}
\end{eqnarray*}

By this last inequality, one has,
\begin{eqnarray} \label{AACotaJ_i}
   \nonumber
   \sum_{k=u_n +1}^{n-i} |Cov (\tilde{X}_i, \tilde{X}_{i+k})| &\leq& \frac{4(r_{\sup})^2(g_{\inf})^{-2}\|K\|_{\infty}^2}{h^2}\sum_{k=u_n +1}^{\infty} a^k\\
   \nonumber
   &=&
   \frac{4(r_{\sup})^2(g_{\inf})^{-2}\|K\|_{\infty}^2}{h^2}\frac{a^{u_n +1}}{1-a}\\
   &=&
   A_4\frac{a^{u_n +1}}{h^2}.
\end{eqnarray}

By equation \eqref{AAExpreI5} and inequalities \eqref{AACotaI_i} and \eqref{AACotaJ_i}, one has,
\begin{eqnarray*}
   I_5  &\leq&  \frac{2}{n^2}\sum_{i=1}^{n-1}\left( A_3 u_n + A_4\frac{a^{u_n +1}}{h^2}\right)\\
   &\leq&
   \frac{1}{n}\left(2A_3 u_n + 2A_4\frac{a^{u_n +1}}{h^2}\right)
\end{eqnarray*}

The variable $u_n$ in the summations of equation \eqref{AAExpreI5} is taken as $u_n = \left[\frac{1}{h|\log a|(\log n)^\frac{1}{2}}\right]$, and the facts that $[x] \leq x \leq [x]+1$ and $0<a<1$ are used to continue bounding the preceding inequality

\begin{eqnarray}
   \nonumber
   I_5  &\leq& 
   \frac{1}{n}\left(2A_3 \frac{1}{h|\log a|(\log n)^\frac{1}{2}} + 2A_4\frac{a^{\frac{1}{h|\log a|(\log n)^\frac{1}{2}}}}{h^2}\right)\\
   \nonumber
   &=&
   \frac{1}{nh}\left(2A_3 \frac{1}{|\log a|(\log n)^\frac{1}{2}} + 2A_4\frac{1}{h}a^{\frac{1}{h|\log a|(\log n)^\frac{1}{2}}}\right)
\end{eqnarray}

By Lemma \ref{AAL2} and the preceding inequality, it follows that, for each $n\geq 4$,

\begin{eqnarray} \label{AACotaI5}
    \nonumber
   I_5  &\leq& \frac{1}{nh}\left(\frac{2A_3}{|\log a|} \frac{1}{(\log n)^\frac{1}{2}} + 2A_4 \frac{10}{(\log n)^\frac{5}{2}}\right)\\
    &\leq&
    \left(\frac{2A_3}{|\log a|} + 20A_4\right)\frac{(\log n)^{-\frac{1}{2}}}{nh}.
\end{eqnarray}

By equalities \eqref{AAExpreI1}, \eqref{AACotaI4}, and \eqref{AACotaI5}, for all $n\geq 4$,
\begin{equation} \label{AACotaI1}
   I_1  \leq 
  \frac{B_{2}}{nh}+ \left(\frac{2A_3}{|\log a|} + 20A_4\right)\frac{(\log n)^{-\frac{1}{2}}}{nh}.
\end{equation}

Finally, by equation \eqref{AAExpreSigma^2} and inequalities \eqref{AACotaI3}, \eqref{AACotaI2}, and \eqref{AACotaI1}, it follows that, for each $n\geq 4$,
\begin{equation} \label{AAvar:hatr}
   \nonumber
  \mathbb{E}\left[(\hat{r}_h (x)-\mathbb{E}[\hat{r}_h (x))^2\right] = I_1 +I_3\leq 
    \frac{A_{1}}{nh}+A_2\frac{(\log n)^{-\frac{1}{2}}}{nh}.
\end{equation}

\section{Oracle Inequality Propositions.}\label{AAapp:desi}
\vspace{0.5cm}
\subsection{Proof of Proposition \ref{AAPropDO}.}\label{AAdemopropDO}

Let $h\in\mathcal{H}$. We have
\begin{equation*}
|r(x)-\hat{r}_{\hat{h}}(x)|
\leq|r(x)-\hat{r}_{h}(x)|+|\hat{r}_{h}(x)-\hat{r}_{h,\hat{h}}(x)|\\
+|\hat{r}_{\hat{h}}(x)-\hat{r}_{h,\hat{h}}(x)|.
\end{equation*}
By adding and subtracting $V(h)$ and $V(\hat{h})$, we obtain
\begin{eqnarray}
\label{AAPreCoraRies}
\nonumber
|r(x)-\hat{r}_{\hat{h}}(x)|
&\leq&|r(x)-\hat{r}_{h}(x)|+A(h,x)+V(h)+A(\hat{h},x)+V(\hat{h})\\
&\leq&|r(x)-\hat{r}_{h}(x)|+2(A(h,x)+V(h)).
\end{eqnarray}

On the other hand,
\begin{eqnarray*}
|\hat{r}_{h,h'}(x)-\hat{r}_{h'}(x)|
&\leq&|\hat{r}_{h,h'}(x)-\mathbb{E}(\hat{r}_{h,h'}(x))|+|\hat{r}_{h'}(x)
-\mathbb{E}(\hat{r}_{h'}(x))|\\
&+&|\mathbb{E}(\hat{r}_{h,h'}(x))-\mathbb{E}(\hat{r}_{h'}(x))|
\end{eqnarray*}
and if we subtract $V(h')$ from both sides and take the positive part, we have,
\begin{eqnarray}
\label{AACotaVarG-L}
\nonumber
\lefteqn{\left\{|\hat{r}_{h,h'}(x)-\hat{r}_{h'}(x)|-V(h')\right\}_+}\\
\nonumber
&\leq&
\left\{|\hat{r}_{h,h'}(x)-\mathbb{E}(\hat{r}_{h,h'}(x))|+|\hat{r}_{h'}(x)
-\mathbb{E}(\hat{r}_{h'}(x))|-V(h')\right\}_+\\
&+&
|\mathbb{E}(\hat{r}_{h,h'}(x))-\mathbb{E}(\hat{r}_{h'}(x))|
\end{eqnarray}
Remember that the estimator's expectation is $\mathbb{E}(\hat{r}_h(x))=(K_h\ast r)(x)$,
so the second term on the right side of the previous inequality is expressed as
\begin{eqnarray*}
|\mathbb{E}(\hat{r}_{h,h'}(x))-\mathbb{E}(\hat{r}_{h'}(x))|&=&|(K_{h'}\ast r)(x)-(K_{h'}\ast(K_{h}\ast r))(x)|\\
&=&|(K_{h'}\ast(r-(K_{h}\ast r))(x)|\\
&\leq&\int |K_{h'}(x-u)(K_h\ast r(u)-r(u))|du .  
\end{eqnarray*}

Since the support of $K$ is $[-1,1]$, we have $\frac{x-u}{h'}\in[-1,1]$, implying that $u\in B(x)$, which implies
\begin{eqnarray*}
 |\mathbb{E}(\hat{r}_{h,h'}(x))-\mathbb{E}(\hat{r}_{h'}(x))| &\leq& \int_{B(x)} \left| \frac{1}{h'}K\left(\frac{x-u}{h'}\right) \right|\left|(K_h\ast r(u)-r(u))\right|du\\ &\leq& \max_{u\in B(x)}\left|(K_h\ast r(u)-r(u))\right|\int_{} \left| \frac{1}{h'}K\left(\frac{x-u}{h'}\right)\right| du\\ &\leq& \|K\|_1 \max_{u\in B(x)}\left|(K_h\ast r(u)-r(u))\right|\\
& \leq& C(h)\|K\|_1.
\end{eqnarray*}

Using that $V(h')=V_1(h')+V_2(h')$ and the previous inequality, we have,
\begin{eqnarray}
\label{AAPreCotaA(h)}
\nonumber
\lefteqn{\left\{|\hat{r}_{h,h'}(x)-\hat{r}_{h'}(x)|-V(h')\right\}_+}\\
\nonumber
&\leq&
\left\{|\hat{r}_{h,h'}(x)-\mathbb{E}(\hat{r}_{h,h'}(x))|-V_{2}(h')\right\}_+ +\left\{|\hat{r}_{h'}(x)
-\mathbb{E}(\hat{r}_{h'}(x))|-V_{1}(h')\right\}_+\\ &+&
C(h)\|K\|_1.
\end{eqnarray}

By the definition of $A(h,x)$ and by taking the maximum over $h'\in\mathcal{H}$ in inequality \eqref{AAPreCotaA(h)}, we have,
\begin{equation}\label{AACotaA(h,x)}
    A(h,x)\leq T_{1}+T_{2}+C(h)\|K\|_1.
\end{equation}

When taking the norm $\E[\cdot^{2}]^{\frac{1}{2}}$ determined by the expectation in equations \eqref{AAPreCoraRies} and \eqref{AACotaA(h,x)}, we obtain
\begin{eqnarray*}\label{AACotaRiesPunt}
\lefteqn{\left(\mathbb{E}\left(r(x)-\hat{r}_{\hat{h}}(x)\right)^2\right)^{\frac{1}{2}}}\\
\nonumber
&\leq& \left(\mathbb{E}\left(r(x)-\hat{r}_{h}(x)\right)^2\right)^\frac{1}{2}+2\left(\left(\mathbb{E}(A(h,x))^2\right)^\frac{1}{2}+V(h)\right)\\
&\leq&  \left(\mathbb{E}\left(r(x)-\hat{r}_{h}(x)\right)^2\right)^\frac{1}{2}+C(h)\|K\|_1+V(h)+\left(\mathbb{E}(T_{1}^{2})\right)^{\frac{1}{2}}+\left(\mathbb{E}(T_{2}^{2})\right)^{\frac{1}{2}}
\end{eqnarray*}

\subsection{Proof of Proposition \ref{AAPropET_1^2yV_1}, (i).}\label{AAdemoT1}

Let's prove that for $n$ sufficiently large,
 $$\mathbb{E}(T_1^2) \leq A_5\frac{\log n}{n}$$
 where $A_5 = B_{9} +2B_{10}$, with $B_{9} =(g_{\inf})^{-2}\|K\|_{\infty}^2 \left\{(8((r_{\sup})^4 +3\sigma^4)\right\}^{\frac{1}{2}}\left(\frac{2}{\sqrt{2\pi}}\right)^{\frac{1}{2}}$, and 
 $B_{10} = \frac{2^4 3}{1-e^{-\frac{\delta }{2}}}(A_{1} + C_{6}) + \frac{5!2^{18} 3 B^2}{1-e^{-\frac{\delta }{2}}} (\sigma + r_{\sup})^2$.
 
To study $\mathbb{E}(T_1^2)$, the following fact is used,
\begin{eqnarray}
\label{AATemporal}
\nonumber
\mathbb{E}(T_1^2) &\leq& \max_{h'\in\mathcal{H}}\mathbb{E}\left[\left\{|\hat{r}_{h'}(x)
-\mathbb{E}(\hat{r}_{h'}(x))|-V_{1}(h')\right\}_+^2\right]\\ &\leq& \sum_{h'\in\mathcal{H}}\mathbb{E}\left[\left\{|\hat{r}_{h'}(x)
-\mathbb{E}(\hat{r}_{h'}(x))|-V_{1}(h')\right\}_+^2\right]
\end{eqnarray}

In this case, there is no guarantee that the term inside the summation on the right side of inequality \eqref{AATemporal} is bounded, preventing the application of the Bernstein inequality to bound it. To address this issue, the truncation auxiliary estimator is defined as follows.
$$
\Tilde{r}_h(x)=\frac{1}{n}\sum_{k=1}^nY_kK_h(x-X_k)g^{-1}(X_k)\mathbf{1}_{\{|Y_k |\leq M_n\}}
$$
for $h\in\mathcal{H}$ with $M_n >0$, $\forall n \in \mathbb{N}$.\\

The term $\left\{|\hat{r}_{h'}(x) -\mathbb{E}(\hat{r}_{h'}(x))|-V_{1}(h')\right\}_+^2$ is decomposed into two parts using the truncation auxiliary estimator and its expectation, as shown below.
$$\hat{r}_{h'}(x)
-\mathbb{E}(\hat{r}_{h'}(x))=\Tilde{r}_{h'}(x)
-\mathbb{E}(\Tilde{r}_{h'}(x))+\{\hat{r}_{h'}(x)-\Tilde{r}_{h'} (x)-\mathbb{E}\left(\hat{r}_{h'}(x)-\Tilde{r}_{h'}(x)\right)\}$$
thus, we have that,
$$|\hat{r}_{h'}(x)
-\mathbb{E}(\hat{r}_{h'}(x))|\leq |\Tilde{r}_{h'}(x)
-\mathbb{E}(\Tilde{r}_{h'}(x))|+|\hat{r}_{h'}(x)-\Tilde{r}_{h'} (x)-\mathbb{E}\left(\hat{r}_{h'}(x)-\Tilde{r}_{h'}(x)\right)|$$
therefore,

\begin{eqnarray*}
\{|\hat{r}_{h'}(x)
-\mathbb{E}(\hat{r}_{h'}(x))|-V_1 (h')\}_+^2 &\leq& 2\{|\Tilde{r}_{h'}(x)
-\mathbb{E}(\Tilde{r}_{h'}(x))|-V_1 (h')\}_+^2\\ &+& 
2\left\{\hat{r}_{h'}(x)-\Tilde{r}_{h'} (x)-\mathbb{E}\left(\hat{r}_{h'}(x)-\Tilde{r}_{h'}(x)\right)\right\}^2
\end{eqnarray*}
upon taking the expectation, we get,

\begin{eqnarray*}
\mathbb{E}\left( \{|\hat{r}_{h'}(x)
-\mathbb{E}(\hat{r}_{h'}(x))|-V_1 (h')\}_+^2\right) \leq 2\Delta_1(h') + 2\Delta_2(h')
\end{eqnarray*}
where 
$$\Delta_1(h')=\mathbb{E}\left( \{|\Tilde{r}_{h'}(x)
-\mathbb{E}(\Tilde{r}_{h'}(x))|-V_1 (h')\}_+^2\right), $$
$$
\Delta_2(h')=\mathbb{E}\left( \left\{\hat{r}_{h'}(x)-\Tilde{r}_{h'} (x)-\mathbb{E}\left(\hat{r}_{h'}(x)-\Tilde{r}_{h'}(x)\right)\right\}^2\right)$$

Upon substituting the previous result into inequality \eqref{AATemporal}, we have that,

\begin{eqnarray}
\label{AATemporal1}
\mathbb{E}(T_1^2) &\leq& 2\left( \sum_{h'\in\mathcal{H}}\Delta_1(h') + \sum_{h'\in\mathcal{H}}\Delta_2(h') \right)
\end{eqnarray}

We proceed to bound the summation $\sum_{h'\in\mathcal{H}}\Delta_2(h')$. For this purpose, we analyze the term $\hat{r}_{h'}(x)-\Tilde{r}_{h'}(x)$,

\begin{eqnarray*}
\hat{r}_{h'}(x)-\Tilde{r}_{h'}(x) &=& \frac{1}{n}\sum_{k=1}^nY_kK_{h'}(x-X_k)g^{-1}(X_k) - \frac{1}{n}\sum_{k=1}^nY_kK_{h'}(x-X_k)g^{-1}(X_k)\mathbf{1}_{\{|Y_k |\leq M_n\}}\\ 
&=&
\frac{1}{n}\sum_{k=1}^n\eta_k
\end{eqnarray*}
where $\eta_k = Y_kK_{h'}(x-X_k)g^{-1}(X_k)\mathbf{1}_{\{|Y_k |> M_n\}}$.\\

So, we have

\begin{eqnarray*}
\Delta_2(h') &=& \frac{1}{n^2} var\left( \sum_{k=1}^n\eta_k\right) \le \frac{1}{n^2} \mathbb{E}\left(\left\{ \sum_{k=1}^n\eta_k\right\}^2\right)   \le \frac{1}{n^2} \mathbb{E}\left( n\sum_{k=1}^n\eta_k^2\right) \le 
\mathbb{E}\left(\eta_1^2\right)
\end{eqnarray*}


and

\begin{eqnarray*} \label{AAB0}
\mathbb{E}\left(\eta_1^2\right)&= & \frac{1}{(h')^2}\mathbb{E}\left\{  Y^2 K^2 \left(\frac{x-X}{h'} \right)g^{-2}(X)\mathbf{1}_{\{|Y |> M_n\}}\right\}\\
&\leq &\frac{(g_{\inf})^{-2} \|K\|_{\infty}^2}{(h')^2} \mathbb{E}\left\{ (Y^2)^2 \right\}^{\frac{1}{2}} \mathbb{E}\left\{ \mathbf{1}_{\{|Y |> M_n\}}^2\right\}^{\frac{1}{2}}\\
&\leq&
\frac{(g_{\inf})^{-2}\|K\|_{\infty}^2}{(h')^2} \mathbb{E}\left\{ Y^4 \right\}^{\frac{1}{2}} \mathbb{P}\left\{ \{|Y |> M_n\} \right\}^{\frac{1}{2}},
\end{eqnarray*}
with

\begin{equation*}
\label{AAm4Y}
\mathbb{E}\{Y^4\} 
\leq
8\mathbb{E}\left\{ r^4 (X)+ \varepsilon^4\right\}
\leq
8\left( (r_{sup})^4 + \mathbb{E}\{\varepsilon^4\}\right)\leq
8\left( (r_{sup})^4 + 3\sigma^4 \right)
\end{equation*}
and
\begin{eqnarray*}
\mathbb{P}\left( \{|Y |> M_n\}\right) &=& \mathbb{P}\left( \{|r(X)+\varepsilon |> M_n\}\right)\\
&\leq&
\mathbb{P}\left( \{r_{\sup}+|\varepsilon |> M_n\}\right)\\
&\leq&
\mathbb{P}\left( \left\{ |\varepsilon |> M_n - r_{\sup}\right\} \right)\\
&\leq&
2\mathbb{P}\left( \left\{\frac{\varepsilon}{\sigma}  > \frac{ M_n - r_{\sup}}{\sigma}\right\}\right)\\
&\leq&
2\frac{\phi\left( \log n\right)}{\log n}
\end{eqnarray*}
where $\phi(x)=\frac{1}{\sqrt{2\pi}}e^{-\frac{x^2}{2}}$.\\

This implies that
\begin{equation*}
\Delta_2(h')\le \frac{B_{9}}{(h')^2}\frac{e^{-\frac{(\log n)^2}{4}}}{(\log n) ^{\frac{1}{2}}}
\end{equation*}
and therefore,

\begin{equation*}
    \sum_{h'\in\mathcal{H}}\Delta_2(h') \leq  B_{9} \frac{e^{-\frac{(\log n)^2}{4}}}{(\log n) ^{\frac{1}{2}}}  \sum_{h'\in \mathcal{H}}\frac{1} {(h')^2}.
\end{equation*}

Since the family of bandwidths is $\mathcal{H}=\{e^{-i}\}_{i=0}^{M}\cap [h_{\min},h_{\max}]$ with $h_{\min} = \frac{(\log n)^8}{n}$, $h_{\max} = \frac{1}{(\log n)^2}$, and $M=\left[\log\left(\frac{n}{(\log n)^8}\right)\right]$, we have

\begin{equation*}
\sum_{h'\in \mathcal{H}}\frac{1} {(h')^2} \leq
\sum_{i=0}^{M}e^{2i}\leq
\int_0^{M}e^{2x}dx
\leq 
 \frac{e^{2M}}{2} 
\leq
\frac{n^2}{2}.
\end{equation*}
Using
$$ \frac{e^{-\frac{(\log n)^2}{4}}}{(\log n)^{\frac{1}{2}}}n^2 = \frac{e^{-\frac{(\log n)^2}{4}}}{(\log n)^{\frac{1}{2}}}e^{2\log n} =  \frac{e^{-\log n\left(\frac{\log n}{4}-2\right)}}{(\log n)^{\frac{1}{2}}} \leq \frac{e^{-\log n}}{(\log n)^{\frac{1}{2}}} \leq \frac{1}{n}$$
we finally obtain that

\begin{equation} \label{AACotaSumBh'}
    \sum_{h'\in\mathcal{H}}\Delta_2(h')  \leq \frac{B_{9}}{2n}.
\end{equation}

Now we proceed to bound the summation $\sum_{h'\in\mathcal{H}}\Delta_1(h')$ in inequality \eqref{AATemporal1}, where the general term of the summation $\Delta_1(h')=\mathbb{E}\left( \{|\Tilde{r}_{h'}(x) -\mathbb{E}(\Tilde{r}_{h'}(x))|-V_1 (h')\}_+^2\right)$ is expressed in terms of the truncation auxiliary estimator, ensuring that such a term is bounded.\\

We analyze the term $\Delta_1(h')$ as shown below.

\begin{eqnarray}
\label{AARiesgoControl}
\nonumber
\Delta_1(h') &=& \mathbb{E}\left(\left\{|\Tilde{r}_{h'}(x)-\mathbb{E}(\tilde{r}_{h'}(x))|-V_1 (h')\right\}_+^2\right)\\
\nonumber
&=&
\int_0^{+\infty}\mathbb{P}\left(\left\{|\tilde{r}_{h'}(x)-\mathbb{E}(\tilde{r}_{h'}(x))|-V_1 (h')\right\}_+^2>t\right)dt\\
\nonumber
&=& \int_0^{+\infty}\mathbb{P}\left(\left\{|\tilde{r}_{h'}(x)-\mathbb{E}(\tilde{r}_{h'}(x))|-V_1 (h')\right\}_{+}>t^{\frac{1}{2}}\right)dt\\
&=&\int_0^{+\infty}\mathbb{P}\left(|\tilde{r}_{h'}(x)-\mathbb{E}(\tilde{r}_{h'}(x))|>V_1(h')+t^{1/2}\right)dt
\end{eqnarray}
the last equality holds because, for $t>0$, $\mathbb{P}\left(\left\{T\right\}_{+}>t\right)=\mathbb{P}\left(T>t\right)$ since $\mathbb{P}\left(T>t \wedge T<0\right)=0$.\\

From the integrand $\mathbb{P}\left(|\tilde{r}_{h'}(x)-\mathbb{E}(\tilde{r}_{h'}(x))|>V_1 (h')+t^{1/2}\right)$ in equation \eqref{AARiesgoControl}, it is observed that,

\begin{equation*}
\tilde{r}_{h'}(x)-\mathbb{E}(\tilde{r}_{h'}(x))= 
\sum_{i=1}^n \left( G(X_i ,\varepsilon_i )-\mathbb{E}[G(X_i ,\varepsilon_i)] \right)=
\sum_{i=1}^n Z_i=
S_n
\end{equation*}
where the $Z_i$ and $S_n$ are defined in Proposition \ref{AATDBernstein}. Then, inequality \eqref{AADBernstein} is satisfied, where $\mathcal{A}_n=\frac{A_{1}}{nh'} + B_{8} \frac{(\log n)^{-\frac{1}{2}}}{nh'}$ is an upper bound for $Var(S_n)$ and $\mathcal{B}_n =B\frac{M_n}{nh'}$.\\

At this point, the analysis of $\Delta_1(h')$ is halted, to demonstrate that by taking $s(u)=\sqrt{2\mathcal{A}_n u}+4\mathcal{B}_n (2u)^3$ for each $u\in\mathbb{R}^+$, with $\mathcal{A}_n=\frac{A_{1}}{nh'} + B_{8} \frac{(\log n)^{-\frac{1}{2}}}{nh'}$ and $\mathcal{B}_n =B\frac{M_n}{nh'}$, it is satisfied that $s(\gamma|\log h'|) \leq V_1 (h')$ for $n$ sufficiently large.

\begin{eqnarray} \label{AACotaTV2_0}
\nonumber
s(\gamma|\log h'|) &=& \sqrt{2\left( \frac{A_{1}}{nh'} + B_{8} \frac{(\log n)^{-\frac{1}{2}}}{nh'} \right) \gamma|\log h'|}+4B\frac{M_n}{nh'} (2\gamma|\log h'|)^3\\
\nonumber
&\leq&
\sqrt{2\left( \frac{A_{1}}{nh'} + B_{8} \frac{(\log n)^{-\frac{1}{2}}}{nh'} \right) \gamma\log n} + 4(2\gamma)^3 B(\log n)^3\frac{M_n}{nh'}\\
\nonumber
&\leq&
\sqrt{ 2\gamma A_{1}\frac{\log n}{nh'} } + \sqrt{ 2\gamma B_{8} \frac{(\log n)^\frac{1}{2}}{nh'} } + 4(2\gamma)^3 B(\log n)^3 (\sigma +r_{\sup})\frac{\log n}{nh'}\\
\nonumber
&=&
\sqrt{ 2\gamma A_{1}\frac{\log n}{nh'} } \left( 1 + \sqrt{ \frac{B_{8}}{A_{1}} (\log n)^{-\frac{1}{2}} } \right) + 4(2\gamma)^3 B (\sigma +r_{\sup})\frac{(\log n)^4}{nh'}\\
\end{eqnarray}

Since $\frac{(\log n)^8}{n} \leq h'$, then $\frac{(\log n)^8}{nh'} \leq 1$, implying $\frac{(\log n)^4}{\sqrt{nh'}} \leq 1$. From the latter, the following inequality is established,

$$4(2\gamma)^3 B (\sigma +r_{\sup})\frac{(\log n)^4}{nh'} = \frac{(\log n)^4}{\sqrt{nh'}} \frac{4(2\gamma)^3 B (\sigma +r_{\sup})}{\sqrt{nh'}} \leq \frac{4(2\gamma)^3 B (\sigma +r_{\sup})}{\sqrt{nh'}}$$

Given this last inequality and the inequality \eqref{AACotaTV2_0}, we have,

\begin{align} \label{AACotaTV2_1}
\nonumber
s(\gamma|\log h'|) &\leq \sqrt{ 2\gamma A_{1}\frac{\log n}{nh'} } \left( 1 + \sqrt{ \frac{B_{8}}{A_{1}} (\log n)^{-\frac{1}{2}} } \right) + \frac{4(2\gamma)^3 B (\sigma +r_{\sup})}{\sqrt{nh'}}\\
&=
\sqrt{ 2\gamma A_{1}\frac{\log n}{nh'} } \left( 1 + \sqrt{ \frac{B_{8}}{A_{1}} }(\log n)^{-\frac{1}{4}} + \frac{4(2\gamma)^3 B (\sigma +r_{\sup})}{\sqrt{2\gamma A_{1}}}(\log n)^{-\frac{1}{2}} \right)
\end{align}

Since $\delta_n =(\log n)^{-\frac{1}{5}}$, it is clear that $(\log n)^{-\frac{1}{2}} \leq (\log n)^{-\frac{1}{4}} \leq \delta_n$ for $n\geq 3$. Therefore, $\frac{(\log n)^{-\frac{1}{2}}}{\delta_n} \searrow 0$ and $\frac{(\log n)^{-\frac{1}{4}}}{\delta_n} \searrow 0$. On the other hand, for any pair of constants $\alpha$ and $\beta$ in $\mathbb{R}^+$, it holds that,

$$ 0 \leq \alpha \frac{(\log n)^{-\frac{1}{2}}}{\delta_n} \leq \frac{1}{2} \quad \text{y} \quad 0 \leq \beta \frac{(\log n)^{-\frac{1}{4}}}{\delta_n} \leq \frac{1}{2}$$
for all $n$ sufficiently large. Therefore,

$$ 0 \leq \alpha (\log n)^{-\frac{1}{2}} \leq \frac{\delta_n}{2} \quad \text{y} \quad 0 \leq \beta (\log n)^{-\frac{1}{4}} \leq \frac{\delta_n}{2}.$$

Now, based on this last result and the inequality \eqref{AACotaTV2_1}, we have, for $n$ sufficiently large,

\begin{eqnarray} \label{AACotaTV2_2}
\nonumber
s(\gamma|\log h'|) &\leq& \sqrt{ 2\gamma A_{1}\frac{\log n}{nh'} } \left( 1 + \frac{\delta_n}{2} + \frac{\delta_n}{2} \right)\\
\nonumber
&=&
\sqrt{ 2\gamma A_{1}\frac{\log n}{nh'} } ( 1 + \delta_n )\\
&=&
V_1 (h').
\end{eqnarray}

We consider the functions $\lambda(u)=\sqrt{2\mathcal{A}_n u}+\mathcal{B}_n (2u)^3$, from Corollary \ref{AACDBernstein}, and $s(u)=\sqrt{2\mathcal{A}_n u}+4\mathcal{B}_n (2u)^3$ for each $u\in\mathbb{R}^+$. By making the change of variable $t^\frac{1}{2} =s(u)$ in equation \eqref{AARiesgoControl}, where $t=s^2 (u)$ and $dt=2s(u)s'(u)du$, we obtain,

\begin{eqnarray*} 
    \Delta_1(h') &=& \int_0^{+\infty}\mathbb{P}\left(|S_n|>V_1 (h') +s(u)\right)2s(u)s'(u)du.
\end{eqnarray*}
Based on the above equation and the inequality \eqref{AACotaTV2_2}, we have,

\begin{eqnarray} \label{AARiesgoControl1}
    \Delta_1(h') &\leq& \int_0^{+\infty}\mathbb{P}\left(|S_n|>s(\gamma|\log h'|)+s(u)\right)2s(u)s'(u)du.
\end{eqnarray}
Since the square root function is subadditive and $(p+q)^3 \leq 4(p^3 +q^3)$, we have,

\begin{eqnarray*}
    s(p)+s(q) &=& \sqrt{2\mathcal{A}_n p}+4\mathcal{B}_n (2p)^3 + \sqrt{2\mathcal{A}_n q}+4\mathcal{B}_n (2q)^3\\
    &\geq&
    \sqrt{2\mathcal{A}_n (p+q)}+\mathcal{B}_n (2(p+q))^3\\
    &=&
    \lambda(p+q).
\end{eqnarray*}
Based on inequality \eqref{AARiesgoControl1}, this last result, and Corollary \ref{AACDBernstein}, we have,

\begin{eqnarray} \label{AARiesgoControl2}
    \nonumber
    \Delta_1(h') &\leq& \int_0^{+\infty}\mathbb{P}\left(|S_n|>\lambda(\gamma|\log h'| +u)\right)2s(u)s'(u)du\\
    \nonumber
    &\leq&
    \int_0^{+\infty} \exp \left(-\frac{\gamma|\log h'|+u}{2}\right) 2s(u)s'(u)du\\
    &=&
    \exp \left(-\frac{\gamma|\log h'|}{2}\right)\int_0^{+\infty} \exp \left(-\frac{u}{2}\right) 2s(u)s'(u)du
\end{eqnarray}

\begin{itemize}
    \item Since $s(u) =\sqrt{2\mathcal{A}_n}\sqrt{u}+4 {B}_n 2^3 u^3$ and $s'(u)= \sqrt{2\mathcal{A}_n}\frac{1}{2\sqrt{u}}+4\mathcal{B}_n 2^3 3u^2$, then $s'(u)u \leq 3s(u)$, implying $2s'(u) s(u) \leq 6\frac{s^2 (u)}{u}$.
    
    \item Since $s^2 (u)=\left[ \sqrt{2\mathcal{A}_n u} +4\mathcal{B}_n (2u)^3\right]^2 \leq 2\left[ 2\mathcal{A}_n u +4^2 \mathcal{B}^2_n (2u)^6 \right]$, then $\frac{s^2 (u)}{u} \leq 2^2\mathcal{A}_n + 2^{11} \mathcal{B}^2_n u^5$.
\end{itemize}
Thus, based on these last two results and inequality \eqref{AARiesgoControl2}, we have, 

\begin{eqnarray}
    \nonumber
    \Delta_1(h') &\leq& \exp \left(-\frac{\gamma|\log h'|}{2}\right)\int_0^{+\infty} 6(2^2\mathcal{A}_n + 2^{11} \mathcal{B}^2_n u^5) \exp \left(-\frac{u}{2}\right) du\\
    \nonumber
    &=&
    3e^{\left\{\frac{\gamma }{2}\log h'\right\}}\left\{\int_0^{\infty}2^3\mathcal{A}_n e^{-\frac{u}{2}}du + \int_0^{\infty} 2^{12}\mathcal{B}_n^2 u^5 e^{-\frac{u}{2}}du \right\}\\
    \nonumber
    &=&
    3e^{\left\{\log ({h'}^\frac{\gamma }{2})\right\}}\left\{2^3\mathcal{A}_n \lim_{b \rightarrow \infty} \left[-2e^{-\frac{u}{2}}\right]_0^b + 2^{13}\mathcal{B}_n^2\int_0^{\infty}  (2w)^5 e^{-w}dw \right\}\\
     \nonumber
    &=&
    3(h')^\frac{\gamma }{2}\left\{2^4 \mathcal{A}_n + 2^{18}\mathcal{B}_n^2\int_0^{\infty}  w^5 e^{-w}dw \right\}\\
    \nonumber
    &=&
    3\left\{2^4 \mathcal{A}_n + 5!2^{18}\mathcal{B}_n^2 \right\}(h')^\frac{\gamma }{2}.
\end{eqnarray}

By summing over each $h'\in\mathcal{H}$ in the previous inequality, substituting the expressions for $\mathcal{A}_n$ and $\mathcal{B}_n$ provided in Proposition \ref{AATDBernstein}, and using the fact that $M_n \leq (\sigma +r_{\sup})\log n$ and $\frac{1}{nh'} \leq \frac{1}{\log n}$, we obtain,
\begin{eqnarray*}
    \lefteqn{\sum_{h'\in \mathcal{H}} \Delta_1(h')}\\
    \nonumber
    &\leq& \sum_{h'\in \mathcal{H}} 3\left\{2^4 \left[ \frac{A_{1}}{nh'} + B_{8} \frac{(\log n)^{-\frac{1}{2}}}{nh'} \right] + 5!2^{18}\left[B\frac{M_n}{nh'} \right]^2 \right\}(h')^\frac{\gamma }{2}\\
    &\leq&
    \sum_{h'\in \mathcal{H}} \left\{2^4 3\left[ \frac{A_{1}}{n} + B_{8} \frac{(\log n)^{-\frac{1}{2}}}{n} \right] (h')^{\frac{\gamma }{2}-1} + 5!2^{18} 3 \frac{B^2 (\sigma + r_{\sup})^2}{nh'}\frac{(\log n)^2}{nh'} (h')^{\frac{\gamma }{2}} \right\}\\
    &\leq&
    \sum_{h'\in \mathcal{H}} \left\{2^4 3\left[ \frac{A_{1}}{n} + B_{8} \frac{(\log n)^{-\frac{1}{2}}}{n} \right] (h')^{\frac{\gamma }{2}-1} + 5!2^{18} 3 B^2 (\sigma + r_{\sup})^2 \frac{\log n}{n} (h')^{\frac{\gamma }{2}-1} \right\}\\
     &=&
     \left\{2^4 3\left[ \frac{A_{1}}{n} + B_{8} \frac{(\log n)^{-\frac{1}{2}}}{n} \right] + 5!2^{18} 3 B^2 (\sigma + r_{\sup})^2 \frac{\log n}{n}  \right\}\sum_{h'\in \mathcal{H}} (h')^{\frac{\gamma }{2}-1}.
\end{eqnarray*}
Now, since $\mathcal{H}=\{e^{-i}\}_{i=0}^{M}\cap [h_{\min},h_{\max}]$ and $\gamma-2 >0$ since $\gamma>2$, the summation on the right side of the previous inequality is rewritten as follows.
\begin{eqnarray} \label{AACotaSumAh'}
    \nonumber
    \lefteqn{\sum_{h'\in \mathcal{H}} \Delta_1(h') }\\
    \nonumber
    &\leq&
     \left\{2^4 3\left[ \frac{A_{1}}{n} + B_{8} \frac{(\log n)^{-\frac{1}{2}}}{n} \right] + 5!2^{18} 3 B^2 (\sigma + r_{\sup})^2 \frac{\log n}{n}  \right\}\sum_{h'\in \mathcal{H}} (h')^{\frac{\gamma-2}{2}}\\
    \nonumber
    &\leq&
    \left\{2^4 3\left[ \frac{A_{1}}{n} + B_{8} \frac{(\log n)^{-\frac{1}{2}}}{n} \right] + 5!2^{18} 3 B^2 (\sigma + r_{\sup})^2 \frac{\log n}{n}  \right\} \sum_{i=0}^{\infty}  \left(e^{-\frac{\gamma-2}{2}}\right)^{i}\\
    \nonumber
    &\leq&
    \left\{\frac{2^4 3}{1-e^{-\frac{\gamma-2}{2}}}\left[ \frac{A_{1}}{n} + B_{8} \frac{(\log n)^{-\frac{1}{2}}}{n} \right] + \frac{5!2^{18} 3 B^2}{1-e^{-\frac{\gamma-2}{2}}} (\sigma + r_{\sup})^2 \frac{\log n}{n} \right\}\\
    &\leq&
   B_{10}\frac{\log n}{n}
\end{eqnarray}
where $B_{10} = \frac{2^4 3}{1-e^{-\frac{\gamma-2}{2}}}(A_{1} + B_{8}) + \frac{5!2^{18} 3 B^2}{1-e^{-\frac{\gamma-2}{2}}} (\sigma + r_{\sup})^2$. 

Based on inequalities \eqref{AATemporal1}, \eqref{AACotaSumBh'}, and \eqref{AACotaSumAh'}, we have,

\begin{equation} \label{AACotaET_1^2}
\mathbb{E}(T_1^2) \leq 2\left(B_{10}\frac{\log n}{n} + \frac{B_{9}}{2n} \right)
=
A_5\frac{\log n}{n}.
\end{equation}

\subsection{Proof of Proposition \ref{AAPropET_1^2yV_1}, (ii).}\label{AAdemoT2}

To study $\mathbb{E}(T_2^2)$, a similar approach to $T_1$ is used, where

\begin{equation}
\label{AAET_2^2Cota0}
\mathbb{E}(T_2^2) \leq\sum_{h'\in\mathcal{H}}\mathbb{E}\left[\left\{|\hat{r}_{hh'}(x)
-\mathbb{E}(\hat{r}_{hh'}(x))|-V_{2}(h')\right\}_+^2\right]
\end{equation}

Similar to the proof of Proposition \ref{AAPropET_1^2yV_1} (i), the truncation auxiliary estimator is defined as follows.
$$
\Tilde{r}_{hh'}(x)=\frac{1}{n}\sum_{k=1}^nY_kK_{hh'}(x-X_k)g^{-1}(X_k)\mathbf{1}_{\{|Y_k |\leq M_n\}}
$$
where $K_{hh'}(\cdot)=K_{h}\ast K_{h'}(\cdot)$, for $h, h'\in\mathcal{H}$.\\

Similarly, one has
\begin{eqnarray*}
\mathbb{E}\left( \{|\hat{r}_{hh'}(x)
-\mathbb{E}(\hat{r}_{hh'}(x))|-V_2 (h')\}_+^2\right) \leq 2\Tilde{\Delta}_1(h') + 2\Tilde{\Delta}_2(h'),
\end{eqnarray*}
where $$\Tilde{\Delta}_1(h')=\mathbb{E}\left( \{|\Tilde{r}_{hh'}(x)
-\mathbb{E}(\Tilde{r}_{hh'}(x))|-V_2 (h')\}_+^2\right)$$
and
$$\Tilde{\Delta}_2(h')=\mathbb{E}\left( \left\{\hat{r}_{hh'}(x)-\Tilde{r}_{hh'} (x)-\mathbb{E}\left(\hat{r}_{hh'}(x)-\Tilde{r}_{hh'}(x)\right)\right\}^2\right).$$

We have
\begin{eqnarray}
\label{AAET_2^2Cota1}
\mathbb{E}(T_2^2) &\leq& 2\left( \sum_{h'\in\mathcal{H}}\Tilde{\Delta}_1(h') + \sum_{h'\in\mathcal{H}}\Tilde{\Delta}_2(h') \right).
\end{eqnarray}

We proceed to bound the summation $\sum_{h'\in\mathcal{H}}\Tilde{\Delta}_2(h')$, rewriting
\begin{equation*}
\hat{r}_{hh'}(x)-\Tilde{r}_{hh'}(x) =
\frac{1}{n}\sum_{k=1}^n\tilde{\eta}_k
\end{equation*}
where $\tilde{\eta}_k = Y_kK_{hh'}(x-X_k)g^{-1}(X_k)\mathbf{1}_{\{|Y_k |> M_n\}}$.\\

It is observed that the only difference between the term $\Tilde{\eta}_k$ and the term $\eta_k$ obtained in Proposition \ref{AAPropET_1^2yV_1} is the kernel used. In this case, it is $K_{hh'}$, while in the previous case, it was $K_{h'}$. This difference does not play a determining role in the following proofs. It is only necessary to take into consideration that whenever $\|K_{h'}\|_\infty$ appears, one should actually have $\|K_{hh'}\|_\infty$. Additionally, the following should be noted,
\begin{eqnarray*}
|K_{h'h}(x-X)| &=& |K_{h'}\ast K_{h}(x-X)|\\
&\leq&
\int_\mathbb{R}|K_{h'}(x-X-t) K_{h}(t)|dt\\
&=&
\|K_{h'}\|_\infty\int_\mathbb{R}|K_{h}(t)|dt\\
&=&
\|K_{h'}\|_\infty \|K\|_1
\end{eqnarray*}
hence, in the results obtained in Proposition \ref{AAPropET_1^2yV_1} (i), $\|K\|_\infty$ should be replaced by $\|K\|_\infty \|K\|_1$. With the previously stated and inequality \eqref{AACotaSumBh'}, we have,
\begin{equation} \label{AACotaSumTildBh'}
    \sum_{h'\in\mathcal{H}}\Tilde{\Delta}_2(h')  \leq  \frac{B_9\|K\|_1^2}{2n}.
\end{equation}

Now we proceed to bound the summation $\sum_{h'\in\mathcal{H}}\Tilde{\Delta}_1(h')$. We have a similar argument to the proof of part (i):
\begin{eqnarray*}
\label{AATild_Ah'Cota0}
\lefteqn{\mathbb{E}\left(\left\{|\tilde{r}_{hh'}(x)-\mathbb{E}(\tilde{r}_{hh'}(x))|-V_2 (h')\right\}_+^2\right)}\\
\nonumber
&=&
\int_0^{+\infty}\mathbb{P}\left(|\tilde{r}_{hh'}(x)-\mathbb{E}(\tilde{r}_{hh'}(x))|>V_2 (h')+t^{1/2}\right)dt\\
&=&
\int_0^{+\infty}\mathbb{P}\left(|\tilde{S}_n|>V_2 (h')+t^{1/2}\right)dt,
\end{eqnarray*}
where $\Tilde{S}_n=\sum_{i=1}^n \Tilde{Z}_i$, $\Tilde{Z}_i =\Tilde{G}(X_i ,\varepsilon_i )-\mathbb{E}[\Tilde{G}(X_i ,\varepsilon_i)]$, $\Tilde{G}(X_i ,\varepsilon_i) = \frac{1}{n}Y_i K_{hh'}(x-X_i)g^{-1}(X_i)\mathbf{1}_{\{|Y_i |\leq M_n\}}$.\\

Again, it is observed that in the following proofs, the fact that the definition of $\Tilde{G}(\cdot , \cdot )$ is in terms of the kernel $K_{hh'}$ instead of the kernel $K_{h'}$ is not crucial. It is only necessary to consider that whenever $\|K_{h'}\|_2$, $\|K_{h'}\|_1$, and $\|K_{h'}\|_{\infty}$ appear, one should actually have $\|K_{hh'}\|_2$, $\|K_{hh'}\|_1$, and $\|K_{hh'}\|_{\infty}$ respectively, and to use the fact that
\begin{itemize}
\item $\|K_{hh'}\|_\infty \leq \|K_{h'}\|_\infty \|K\|_1$
\item $\|K_{hh'}\|_2 \leq \|K_{h'}\|_2 \|K_h\|_1=\|K_{h'}\|_2 \|K\|_1$
    \item $\|K_{hh'}\|_1 \leq \|K_{h'}\|_1 \|K_h\|_1=\|K\|_1^2$.
\end{itemize}

So, each time $\|K\|_2$, $\|K\|_1$, and $\|K\|_{\infty}$ appear in the results of Proposition \ref{AAPropET_1^2yV_1} (i), and Proposition \ref{AATDBernstein}, they should be replaced by $\|K\|_2\|K\|_1$, $\|K\|_1\|K\|_1$, and $\|K\|_{\infty}\|K\|_1$ respectively. Considering the aforementioned and inequality \eqref{AACotaSumAh'}, we have,
\begin{eqnarray} \label{AACotaSumTildAh'}
    \sum_{h'\in \mathcal{H}} \Tilde{\Delta}_1(h') &\leq& B_{10}\|K\|_1^2\frac{\log n}{n}.
\end{eqnarray}
Now, based on the inequalities \eqref{AACotaSumTildBh'} and \eqref{AACotaSumTildAh'}, we have,
\begin{eqnarray} \label{AACotaET_2^2}
\nonumber
\mathbb{E}(T_2^2) &\leq& 
A_5\|K\|_1^2\frac{\log n}{n}.
\end{eqnarray}

\section*{Acknowledgments}
 K.\ Bertin is supported by FONDECYT regular grants 1221373 and 1230807 from ANID-Chile, and the Centro de Modelamiento Matemático (CMM) BASAL fund FB210005 for centers of excellence from ANID-Chile. L.\ Fermín is supported by FONDECYT regular grant 1230807 from ANID-Chile. M.\ Padrino is supported by FONDECYT regular grant 1221373 from ANID-Chile and PhD grant CONICYT –
PFHA / Doctorado Nacional 2019 – 21191358.

This final version of this article will be published in Statistics.



\end{document}